\newcommand{\E}{\mathcal{E}}
\newcommand{\F}{\mathcal{F}}
\newcommand{\Z}{{\mathbb Z}}
\renewcommand{\phi}{\varphi}
\newcommand{\Ell}{\mathrm{Ell}}
\newcommand{\into}{{\hookrightarrow}}
\newcommand{\cpalgOfE}{{\mathcal{O}(\E)}} % Cuntz-Pimsner algebra of E
\newcommand{\cpalgOfEY}{{\mathcal{O}(\E_Y)}} % Cuntz-Pimsner algebra of E_Y
\newcommand{\setYn}[1]{\alpha^{#1}(Y)} % iterates of Y
\newcommand{\setVn}[1]{\alpha^{#1}(V)}
\newcommand{\setWn}[1]{\alpha^{#1}(W)}
\newcommand{\restrict}[2]{{\left.{#1}\right|_{#2}}}
\newcommand{\cstar}{$\mathrm{C}^*$}
\theoremstyle{plain}
    \newtheorem{theorem}{Theorem}[section]
    \newtheorem{lemma}[theorem]{Lemma}
    \newtheorem{corollary}[theorem]{Corollary}
    \newtheorem{proposition}[theorem]{Proposition}
\theoremstyle{definition}
    \newtheorem{definition}[theorem]{Definition}
    \newtheorem{example}[theorem]{Example}
      \newtheorem{examples}[theorem]{Examples}
    \newtheorem{remark}[theorem]{Remark}
\theoremstyle{remark}
\DeclareMathOperator{\id}{id}
\DeclareMathOperator{\Aut}{Aut}
\DeclareMathOperator{\supp}{supp}
\DeclareMathOperator{\Span}{span}
\DeclareMathOperator{\orb}{orb}
\author[Adamo, Archey, Forough, Georgescu, Jeong, Strung, Viola]{Maria Stella Adamo \and Dawn E. Archey \and  Marzieh Forough \and Magdalena C. Georgescu \and Ja A Jeong \and Karen R. Strung \and Maria Grazia Viola}
\address{Department of Mathematical Sciences \\
The University of Tokyo \\
3-8-1 Komaba, Tokyo, 153-8914, Japan} 
\email{adamoms@ms.u-tokyo.ac.jp}
\address{Department of Mathematics\\
University of Detroit Mercy\\
4001 W. McNichols Road\\
Detroit, MI, USA 48221-3038 }
\email{archeyde@udmercy.edu}
\address{Department of Applied Mathematics, Faculty of Information Technology\\ Czech Technical University in Prague\\ Th\'akurova 9 \\160 00, Prague 6, Czech Republic, and Department of Abstract Analysis\\ Institute of Mathematics, Czech Academy of Sciences, \v{Z}itn\'a 25, 115 67 Prague 1, Czech Republic}
\email{foroumar@fit.cvut.cz}
\address{Toronto, Ontario, Canada} 
\email{magda@uvic.ca}
\address{Department of Mathematical Sciences and Research Institute of Mathematics\\
Seoul National University\\
Seoul, Korea 08826}
\email{jajeong@snu.ac.kr}
\address{Department of Abstract Analysis\\ Institute of Mathematics, Czech Academy of Sciences, \v{Z}itn\'a 25, 115 67 Prague 1, Czech Republic}
\email{strung@math.cas.cz}
\address{Lakehead University\\
Orillia ON L3V 0B9\\
Canada\\ and Fields Institute \\ 222 College Street \\ Toronto, ON M5T 3J1, Canada}
\email{mviola@lakeheadu.ca}
\date{\today}
\subjclass[2010]{37B05, 46L35, 46L85, 46H25}
\keywords{Minimal homeomorphisms, $\mathrm{C}^*$-correspondences, classification of nuclear \mbox{$\mathrm{C}^{*}$-algebras}}
\thanks{KRS is currently funded by GA\v{C}R project 20-17488Y and \mbox{RVO: 67985840} and part of this work was carried out while funded by a Radboud Excellence Initiative Postdoctoral Fellowship. }
\thanks{JAJ was partially supported by NRF 2018R1D1A1B07041172. }
\thanks{MF was supported by GA\v{C}R project 19-05271Y, RVO:67985840.}
\thanks{MGV was supported by an NSERC Discovery Grant.}
\thanks{MSA was supported by ERC Advanced Grant no. 669240 QUEST “Quantum Algebraic Structures and Models”. MSA acknowledges the Gruppo Nazionale per l'Analisi Matematica, la Probabilit\`{a} e le loro applicationi (GNAMPA) of INdAM. Part of this work was carried out while MSA was funded by an Oberwolfach Leibniz Fellowship in 2020 and in 2021, and supported by the University of Rome ``Tor Vergata'' funding scheme ``Beyond Borders'' CUP E84I19002200005. MSA is currently a JSPS International Research Fellow supported by the Grant-in-Aid Kakenhi n. 22F21312.}
\begin{document}

\title[$\mathrm{C}^*$-algebras of homeomorphisms twisted by vector bundles]{$\mathrm{C}^*$-algebras associated to homeomorphisms twisted by vector bundles over finite dimensional spaces}

\begin{abstract} In this paper we study Cuntz--Pimsner algebras associated to \mbox{$\mathrm{C}^*$-correspondences} over commutative $\mathrm{C}^*$-algebras from the point of view of the $\mathrm{C}^*$-algebra classification programme. We show that when the correspondence comes from an aperiodic homeomorphism of a finite dimensional infinite compact metric space $X$ twisted by a vector bundle, the resulting Cuntz--Pimsner algebras have finite nuclear dimension. When the homeomorphism is minimal, this entails  classification of these $\mathrm{C}^*$-algebras by the Elliott invariant. This establishes a dichotomy: when the vector bundle has rank one, the Cuntz--Pimsner algebra has stable rank one. Otherwise, it is purely infinite. 

For a Cuntz--Pimsner algebra of a minimal homeomorphism of an infinite compact metric space $X$ twisted by a line bundle over $X$, we introduce orbit-breaking subalgebras. With no assumptions on the dimension of $X$, we show that they are centrally large subalgebras and hence simple and stably finite. When the dimension of $X$ is finite, they are furthermore $\mathcal{Z}$-stable and hence classified by the Elliott invariant.
\end{abstract}

\maketitle
\tableofcontents

\section{Introduction}

One of the most pleasing aspects of $\mathrm{C}^*$-algebra theory is the ability to study various mathematical objects using a $\mathrm{C}^*$-algebraic toolkit. Well-known examples include studying dynamical systems via the crossed product construction, directed graphs and shifts of finite type via Cuntz--Krieger algebras and more general graph $\mathrm{C}^*$-algebras, topological groups via their group $\mathrm{C}^*$-algebras, and many more. In \cite{Pimsner1997}, Pimsner generalised both crossed products by the integers and Cuntz--Krieger algebras via a single construction which we now call \emph{Cuntz--Pimsner algebras}. 

In the Cuntz--Pimsner construction, the underlying mathematical object is a \mbox{$\mathrm{C}^*$-correspondence}. If $A$ and $B$ are two $\mathrm{C}^*$-algebras then an $A$-$B$ $\mathrm{C}^*$-correspondence is an $A$-$B$-module with some extra structure (see \Cref{defn.Corr}). It can be thought of as a generalised homomorphism between $A$ and $B$, particularly if we are only interested in the Morita equivalence (or stable isomorphism) classes of $\mathrm{C}^*$-algebras, rather than isomorphism classes. Given an $A$-$A$ $\mathrm{C}^*$-correspondence---which for brevity we call a $\mathrm{C}^*$-correspondence over $A$---and viewing it as a generalised endomorphism, we can think of a Cuntz--Pimsner algebra as a type of ``crossed product'' by $\mathbb{N}$ or $\mathbb{Z}$. 

The Cuntz--Pimsner construction was later made more general in the work of Katsura (see for example, \cite{Katsura2003}), where fewer restrictions were required on the \mbox{$\mathrm{C}^*$-correspondences} under consideration. Around the same time, the concept of a $\mathbb{Z}$-crossed product of a $\mathrm{C}^*$-algebra $A$ by a Hilbert $A$-bimodule (which can be seen as a type of $\mathrm{C}^*$-correspondence) was introduced by Abadie, Eilers and Exel (see Definition 2.4 in \cite{AEE:Cross}). It turns out that if we view a Hilbert $A$-bimodule $\E$ as a $\mathrm{C}^*$-correspondence, Katsura's construction gives a $\mathrm{C}^*$-algebra isomorphic to the crossed product $A \rtimes_\E \mathbb{Z}$ of \cite{AEE:Cross}, see \cite[Proposition 3.7]{Katsura2003}.

In this paper, we study Cuntz--Pimsner algebras coming from $\mathrm{C}^*$-correspondences over commutative $\mathrm{C}^*$-algebras.  Viewing a $\mathrm{C}^*$-correspondence as a generalised homomorphism, it is not surprising that these Cuntz--Pimsner algebras have a strong dynamical flavour and, especially in the case of Hilbert $C(X)$-bimodules, bear many similarities to crossed products by homeomorphisms. Topological dynamical systems of the form $(X, \alpha)$, where $X$ is a compact metric space and $\alpha$ is a homeomorphism, have long been a source of interesting examples in the theory of $\mathrm{C}^*$-algebras. In particular, when $X$ is an infinite compact metric space and $\alpha$ is minimal, which is to say, there are no non-empty proper closed $\alpha$-invariant subsets of $X$, the crossed products $C(X) \rtimes_\alpha \mathbb{Z}$ have received special attention from Elliott's classification programme for $\mathrm{C}^*$-algebras.  Here we investigate the Cuntz--Pimsner algebras of $C(X)$-correspondences through the lens of the classification programme, further contributing to this long line of research. 

Elliott's classification programme was initiated by George Elliott in the 1990s.  In 1976, he showed that approximately finite dimensional (AF) algebras are classified, up to $^*$-isomorphism, by their scaled ordered $K_0$-groups \cite{Ell:AF}. After further classification successes in the case of simple and nuclear $\mathrm{C}^*$-algebras,  Elliott conjectured that all simple, separable, unital and nuclear $\mathrm{C}^*$-algebras  could be classified, up to $^*$-isomorphism, by an invariant consisting of $K$-theory and tracial information. For a unital $\mathrm{C}^*$-algebra $A$, the invariant $\Ell(A)$, now called the \emph{Elliott invariant}, is given by 
 \[ \Ell(A) = (K_0(A), K_0(A)_+, [1_A], K_1(A), T(A), \rho),\]
 where $(K_0(A), K_0(A)_+, [1_A], K_1(A))$ is the (pointed, ordered) $K$-theory of $A$, $T(A)$ the tracial state simplex, and $\rho : K_0(A) \times T(A) \to \mathbb{R}$ a pairing map defined by $\rho([p]-[q], \tau) = \tau(p) - \tau(q)$, for $\tau$ the (non-normalised) inflation of a tracial state to a suitable matrix algebra over $A$. 
 
When pathological counterexamples to Elliott's original conjecture were found (see for example \cite{Villadsen1998, Villadsen1999, Rordam2003, Toms2005}), it became clear that the class of simple, separable, unital, nuclear $\mathrm{C}^*$-algebras was too broad and would have to be further restricted by imposing certain regularity conditions. In particular, the construction of the Jiang--Su algebra $\mathcal{Z}$ meant that any hope of classification would necessitate restricting to those simple, separable, unital, nuclear $\mathrm{C}^*$-algebras which are $\mathcal{Z}$-stable. The Jiang--Su algebra is a simple, separable, unital, nuclear, infinite dimensional $\mathrm{C}^*$-algebra with the same Elliott invariant as $\mathbb{C}$ \cite{JS1999}. Since the Elliott invariant behaves well under tensor products, this means that (under a mild condition on the order structure of the $K_0$-group), if $A$ is a simple, separable, unital, nuclear $\mathrm{C}^*$-algebra then $\Ell(A) \cong \Ell(A\otimes \mathcal{Z})$. We say that $A$ is $\mathcal{Z}$-stable if $A \cong A \otimes \mathcal{Z}$.

Upon restricting to those simple, separable, unital, nuclear, $\mathcal{Z}$-stable $\mathrm{C}^*$-algebras which satisfy the Universal Coefficient Theorem (UCT), the classification programme has been a resounding success: the class of all such $\mathrm{C}^*$-algebras can be classified, up to $^*$-isomorphism, by Elliott invariants. See \Cref{ClassThm} for a precise statement. The final pieces of the classification theorem have only been put into place within the last decade, stemming from the work of many mathematicians. We highlight in particular Elliott, Kirchberg, Phillips, Winter, Tikuisis, White, Toms, Lin, Niu, and Gong. An overview of the most recent results can be found, for example, in \cite{Win:ICM, TWW, Str:book}. The UCT condition is a rather mild assumption, as every known nuclear $\mathrm{C}^*$-algebra satisfies the UCT.  In particular, the $\mathrm{C}^*$-algebras in this paper all satisfy the UCT. However, it remains an open problem to determine whether nuclearity implies the UCT.

For simple, separable, unital, nuclear, infinite dimensional $\mathrm{C}^*$-algebras, the seemingly mysterious condition of $\mathcal{Z}$-stability of a given $\mathrm{C}^*$-algebra turns out to be equivalent to its \emph{nuclear dimension} being finite \cite{Win:Z-stabNucDim, CETWW}. The nuclear dimension, introduced by Winter and Zacharias \cite{WinterZac:dimnuc} based on earlier work by Winter \cite{Win:cpr, Win:cpr2} and Kirchberg and Winter \cite{KirWinter:dr}, is a noncommutative generalisation of covering dimension in the sense that $\dim_{\mathrm{nuc}} C(X) = \dim X$ for a locally compact Hausdorff space $X$ \cite{WinterZac:dimnuc}. Rephrasing $\mathcal{Z}$-stability in terms of finite nuclear dimension is quite satisfying, since the fact that non-finite dimensional objects might be pathological is not so surprising. Furthermore, even for a noncommutative $\mathrm{C}^*$-algebra, the nuclear dimension can often be estimated by a suitable notion of dimension for the underlying mathematical structure. Of importance for this paper is the notion of Rokhlin dimension for a dynamical system \cite{HirWinZac:RokDim}, and more generally, Rokhlin dimension of a $\mathrm{C}^*$-correspondence \cite{MR3845113}. These allow us to establish, when $\dim X< \infty$,  the classification of the Cuntz--Pimsner algebras of $C(X)$-correspondences which are finitely generated projective as right Hilbert modules,  see~\Cref{class}.

We also obtain a classification for Cuntz--Pimsner algebras of certain $C(X)$-correspondences which are not necessarily full: starting with a full Hilbert $C(X)$-bimodule, we restrict the left action to a proper ideal of $C(X)$.  The resulting Cuntz--Pimsner algebra is a $\mathrm{C}^*$-subalgebra of the Cuntz--Pimsner algebra of the original full bimodule. We call such Cuntz--Pimsner algebras \emph{orbit-breaking} subalgebras. The orbit-breaking construction in the case of a minimal dynamical system $(X, \alpha)$ is originally due to Putnam, who constructed AF algebras as orbit-breaking subalgebras of crossed products of Cantor minimal systems \cite{Putnam:MinHomCantor}. Every simple unital AF algebra can be obtained by such an orbit-breaking construction, while $K$-theoretic obstructions prevent a crossed product by a minimal homeomorphism from being AF. Orbit-breaking constructions have furthermore been used to construct the Jiang--Su algebra (also not obtainable as a crossed product by a minimal homeomorphism) \cite{DPS:JiangSu}, as well as simple, separable, unital, nuclear, $\mathcal{Z}$-stable $\mathrm{C}^*$-algebras with a wide range of Elliott invariants \cite{DPS:OrbitBreaking}.  An interesting question is whether such dynamical constructions can exhaust the range of the Elliott invariant. In related work, Li has constructed twisted principal \'etale groupoid $\mathrm{C}^*$-algebras with any prescribed Elliott invariant \cite{Li:ClassifiableCartan}. There the groupoids are constructed by mimicking known inductive limit structures rather than groupoids coming from topological dynamical systems. To reach all possible invariants, Li also requires twists over such groupoids. The Cuntz--Pimsner algebras considered in Sections~\ref{sec.cp} and \ref{sec.large} turn out to be isomorphic to twisted groupoid $\mathrm{C}^*$-algebras, where the twist is over either a transformation groupoid $X \times \mathbb{Z}$ or an orbit-breaking groupoid in the sense of \cite{Put:K-theoryGroupoids, DPS:OrbitBreaking}. In the case that $\dim X < \infty$, these Cuntz--Pimsner algebras are also classified by the Elliott invariant, see \Cref{class2}. The proof relies on the notion of a large subalgebra due to Phillips \cite{PhLarge}. In particular, the orbit-breaking algebras of Section~\ref{sec.large} make the possibility of exhausting the Elliott invariant through dynamical constructions look increasingly plausible. The groupoid picture will be further developed in subsequent work by Forough and Strung \cite{AdFoSt:Cartan}. Orbit-breaking subalgebras will also play a key role in \cite{FoJeSt:rsh}, where it is shown they have an approximately subhomogeneous structure allowing us to push our classification results beyond the case that $\dim X < \infty$.

\subsection*{Summary of the paper} The paper is structured as follows. In Section~\ref{sec.prelim} we establish the necessary preliminaries by recalling the definition for Hilbert modules, $\mathrm{C}^*$-correspondences, and Hilbert bimodules. We then define Cuntz--Pimsner algebras via covariant representations. 

In Section~\ref{sec.Corr} we specialise to $\mathrm{C}^*$-correspondences over $\mathrm{C}^*$-algebras which are unital and commutative, and set up notation for the correspondences of interest in this paper, those of the form $\Gamma(\mathscr{V}, \alpha)$. These are $C(X)$-correspondences which, as right modules have the structure of a module of sections of a vector bundle $\mathscr{V}$ over a compact metric space $X$, and whose left action is given by composing a function \mbox{$f \in C(X)$} with a homeomorphism $\alpha : X \to X$. We determine when the corresponding Cuntz--Pimsner algebras are simple, see \textbf{\Cref{{19-10-18-3}}}. 
Section~\ref{sec.cp} takes a more detailed look at the $\mathrm{C}^*$-algebras associated to the particular $C(X)$-correspondences of the previous section. We establish that they are stably finite if and only if $\mathscr{V}$ is a line bundle, and determine their tracial state spaces, \textbf{Propositions \ref{prop:traces} and \ref{prop.T(A)}.}

Our first classification theorem appears in Section~\ref{sec.RD}. Using the Rokhlin dimension for $\mathrm{C}^*$-correspondences defined by Brown, Tikuisis and Zelenberg \cite{MR3845113}, we show that the $C(X)$-correspondence $\Gamma(\mathscr{V}, \alpha)$, when $\dim X < \infty$ and $\alpha$ is aperiodic, has finite Rokhlin dimension, \textbf{\Cref{thm.FinRokDim}}, and hence has finite nuclear dimension, \textbf{\Cref{fin_nuc_dim}}. We conclude that $\mathrm{C}^*$-algebras constructed from such correspondences are classified by the Elliott invariant, \textbf{\Cref{class}}. Moreover, this establishes a dichotomy: when $\mathscr{V}$ is a line bundle, $\mathcal{O}(\Gamma(\mathscr{V}, \alpha))$ has stable rank one; otherwise it is purely infinite, \textbf{\Cref{dichotomy}}. 

Section~\ref{sec.large} is  the most technical part of the paper. We restrict to the case of Hilbert bimodules. More explicitly, this means that for $\Gamma(\mathscr{V}, \alpha)$ we assume that $\mathscr{V}$ is a line bundle over $X$. We introduce orbit-breaking subalgebras of $\mathcal{O}(\Gamma(\mathscr{V},\alpha))$, which are also Cuntz--Pimsner algebras by $C(X)$-correspondences. However, the Hilbert bimodules are not finitely generated projective so we cannot apply the classification results of the previous section. For this reason, the main goal of Section~\ref{sec.large} is to show that the orbit-breaking subalgebras are \emph{centrally large subalgebras} (see \Cref{defn.largesubalgebra}), in the sense of Archey and Phillips~\cite{ArchPhil:SR1}. Centrally large subalgebras share many properties with their containing $\mathrm{C}^*$-algebras, most notably, $\mathcal{Z}$-stability.
This allows us to extend the nuclear dimension result of Section~\ref{sec.RD}, see \textbf{\Cref{cor.OBNucdim}},
as well as the classification results to include the orbit-breaking subalgebras, \textbf{\Cref{class2}}.

Finally, in Section~\ref{sec.examples} we discuss certain examples as a way of showing the richness of this class of $\mathrm{C}^*$-algebras. Special attention is given to quantum Heisenberg manifolds, which can be realised as Cuntz--Pimsner algebras of Hilbert $C(\mathbb{T}^2)$-bimodules. 

\subsubsection*{Acknowledgments} The ideas for this paper stemmed from the Women in Operator Algebras workshop, which took place at the Banff International Research Station in December 2018. The authors express their thanks to BIRS for hosting them on that occasion. The collaboration further benefited from visits to Radboud University and the University of Münster in spring 2019, as well as Seoul National University in autumn 2019. MSA is grateful to the Institute of Mathematics of the Czech Academy of Sciences for her visit to MF and KRS in summer 2020, and to the ITP of the University of Leipzig, where part of this work was carried out in Spring/Summer 2021.  DA thanks the Fields Institute for hosting her in August 2019 to visit MCG and MGV. MCG thanks the University of Detroit Mercy for hosting her in March of 2019 for a visit to DA.  KRS thanks the Fields Institute for hosting her in January 2020 to visit MCG and MGV, as well as the University of Rome, Tor Vergata, for hosting her in February 2020 to visit MSA. The authors would also like to thank the referee for a very careful reading and suggestions which improved the paper, as well as Aaron Kettner who pointed out a missing assumption in the discussion following Lemma~\ref{lem.SectionsOfEy}.

\section{Preliminaries} \label{sec.prelim}

In this section we define Hilbert modules, $\mathrm{C}^*$-correspondences, and Hilbert bimodules. The reader should be aware that the terminology for $\mathrm{C}^*$-correspondences and Hilbert bimodules is not consistent in the literature and they often appear under different names.

\subsection{Hilbert modules}
Here we recall some elementary facts about Hilbert modules. Further details can be found in \cite{Lan:modules}, as well as \cite[Chapter 15]{Weg:k-theory} and \cite[Chapters 2 and 3]{RaeWil:morita}.

We will be interested in Hilbert modules over unital $\mathrm{C}^*$-algebras which are finitely generated projective, or equivalently, those which are algebraically finitely generated \cite[Corollary 15.4.8]{Weg:k-theory}. Let $A$ be a unital $\mathrm{C}^*$-algebra. A Hilbert $A$-module $\E$ is algebraically finitely generated if there exist $k \in \mathbb{N}$ and $\xi_1, \dots, \xi_k \in \E$ such that every $\xi \in \E$ can be written as $\xi = \sum_{i=1}^k \xi_i a_i$ for some $a_i \in A$, $1 \leq i \leq k$. It is finitely generated projective if there exists $n \in \mathbb{Z}_{\geq 0}$ such that it is a direct summand in the  Hilbert $A$-module $A^{\oplus n}$. The dual $\E^{\dagger}$ of a right Hilbert $A$-module $\E$ is the left $A$-module of bounded $A$-linear maps $\varphi : \E \to A$, with left action $(a \phi)(\xi) = a \phi(\xi)$. Define 
\[\hat{\xi}(\eta) = \langle \xi, \eta \rangle_\E, \quad \eta \in \E.\]
Then $\hat{\xi} \in \E^{\dagger}$. We say that $\E$ is \emph{self-dual} if $\E^{\dagger} = \widehat{\E} := \{ \hat{\xi} \mid \xi \in \E\}$, or, equivalently, if we consider the induced right module structure $(\varphi a)(\xi) = a^*(\varphi(\xi))$,  then $\E^{\dagger} \cong \E$ as right Hilbert $A$-modules. Every finitely generated projective Hilbert $A$-module is self-dual, see the discussion following Theorem 1.3 in \cite{Miscenko1979}. 

We denote by $\mathcal{L}(\E)$ the $\mathrm{C}^*$-algebra of adjointable operators on $\E$, and by $\mathcal{K}(\E)$ the ideal of compact operators. For any $\xi, \eta \in \mathcal{E}$, we denote the associated rank one operator by $\theta_{\xi, \eta}$, that is, the adjointable operator given by
\begin{equation} \theta_{\xi, \eta}(\zeta) = \xi \langle \eta, \zeta \rangle_\E, \qquad \zeta \in \mathcal{E}. \label{eqn.rankoneop}
\end{equation}
If $A$ is unital and $\mathcal{E}$ is a finitely generated and projective Hilbert $A$-module then $\mathcal{K}(\mathcal{E}) = \mathcal{L}(\mathcal{E})$, since $\mathcal{K}(\E)$ is unital \cite[Remarks 15.4.3]{Weg:k-theory}. 

\subsection{\texorpdfstring{$\mathrm{C}^*$}{C*}-correspondences and Hilbert bimodules} \label{subsec.CorBimod}
\begin{definition} \label{defn.Corr}
Let $A$ and $B$ be $\mathrm{C}^*$-algebras. An $A$-$B$ \emph{$\mathrm{C}^*$-correspondence} is a right Hilbert $B$-module $\mathcal{E}$ equipped with a $^*$-homomorphism, 
\[ \varphi_\E : A \to \mathcal{L}(\mathcal{E}),\]
called the \emph{structure map}. If $A = B$ then we call $\E$ a \emph{$\mathrm{C}^*$-correspondence over $A$}. 
\end{definition}

The map $\varphi_\E$ gives $\mathcal{E}$ a left $A$-module structure, but not necessarily a left Hilbert $A$-module structure, as there need not be a left $A$-valued inner product on $\E$. 

We also have the related notion of a \emph{Hilbert $A$-$B$-bimodule}, which is a right Hilbert $B$-module that also has the structure of a left Hilbert $A$-module, and for which the inner products satisfy a certain compatibility condition.

\begin{definition}\label{defn.bimod} Let $\mathcal{E}$ be a right Hilbert $B$-module  and left Hilbert $A$-module. We say that $\mathcal{E}$ is an $A$-$B$ \emph{Hilbert bimodule} if 
\[ \xi \langle \eta, \zeta \rangle_\E = _\E\!\!\langle \xi, \eta \rangle \zeta, \qquad \xi, \eta, \zeta \in \mathcal{E},\]
where $\langle \cdot, \cdot \rangle_\E$ denotes the right inner product taking values in $B$ and  $_\E \langle \cdot, \cdot \rangle$ the left inner product taking values in $A$. If $A = B$ then we call $\E$ a \emph{Hilbert $A$-bimodule}.
\end{definition}

Given a $\mathrm{C}^*$-correspondence $\mathcal{E}$ over $A$, denote by 
\begin{equation*}
\langle\E,\E\rangle_\E=\overline{\textup{span}}\{ \langle \xi, \eta \rangle_\E \mid \xi, \eta \in \mathcal{E} \},
\end{equation*} 
 where $\overline{\Span}(S)$ of a set $S$ denotes the closure of the linear span of $S$. 

A $\mathrm{C}^*$-correspondence $\mathcal{E}$ over $A$ is \emph{full} if $\langle\E,\E\rangle_\E= A$. When $\E$ is a Hilbert $A$-bimodule, it is right full if $\langle \E, \E \rangle_{\E} = A$ and left full if $_\E\langle \E, \E \rangle =A$. 

Let us now consider the relationship between $\mathrm{C}^*$-correspondences and Hilbert bimodules. 

Suppose that $\E$ is a Hilbert $A$-bimodule.  As is shown in \cite{BrownMingoShen}, the definition of a Hilbert $A$-bimodule guarantees that the left action of $A$ is adjointable, which is to say that $\langle a \xi, \eta \rangle_\E = \langle \xi, a^* \eta \rangle_\E$ for every $a \in A$ and every $\xi, \eta \in \E$. Thus we have a well-defined $^*$-homomorphism $\varphi_\E: A \to \mathcal{L}(\E)$, so any Hilbert $A$-bimodule can be given the structure of a $\mathrm{C}^*$-correspondence over $A$. Note that $_\E \langle \E, \E \rangle$ is an ideal in $A$ and by the compatibility condition of the inner products, we have $\varphi_\E( _\E \langle \E, \E \rangle) = \mathcal{K}(\E)$. If $\E$ is full as a left Hilbert $A$-module (for example, if $A$ is simple), then this implies that $A \cong \mathcal{K}(\E)$.

Now suppose that $(\mathcal{E}, \varphi_\E)$ is a $\mathrm{C}^*$-correspondence over a $\mathrm{C}^*$-algebra $A$. 
Denote 
\[ J_\E := \varphi_\E^{-1}(\mathcal{K}(\E))\cap (\ker \varphi_\E)^{\perp},\]
where
\[ (\ker \varphi_\E)^{\perp} := \{ a \in A \mid ab = 0 \text{ for all } b \in \ker \varphi_\E\}.\]
Evidently $J_\E$ is an ideal in $A$. If $\E$ is a $\mathrm{C}^*$-correspondence over $A$ with $\varphi_\E(J_\E) = \mathcal{K}(\E)$, then $\E$ can be given the structure of a Hilbert $A$-bimodule with left inner product
\[ _\E\langle \xi, \eta \rangle := \varphi_\E^{-1}(\theta_{\xi, \eta}).\]

\subsection{Tensor products} \label{subsec.TP}
Let $A$ be a $\mathrm{C}^*$-algebra and let $(\E, \varphi_\E)$ and $(\F, \varphi_\F)$ be $\mathrm{C}^*$-correspondences over $A$. We denote their internal tensor product by $\E \otimes_A \F$, which can always be given the structure of a $\mathrm{C}^*$-correspondence over $A$, see for example, \cite[Section 4.6]{BrownOzawaBook} and \cite[Chapter 4]{Lan:modules}.

If $\E$ is a $\mathrm{C}^*$-correspondence over $A$, to ease notation we omit the subscript $A$ and denote by $\E^{\otimes n}$ the $n$-fold internal tensor product of $\E$ over $A$.

When $\E$ and $\F$ are both Hilbert $A$-bimodules, so is $\E \otimes_A \F$. The left inner product on the tensor product $\E \otimes_A \F$ is given by
\[ _{\E \otimes_A \F}\langle \xi_1 \otimes \xi_2, \eta_1 \otimes \eta_2 \rangle = \,_\E\langle \xi_1, \eta_1 \,_\F\langle \eta_2, \xi_2 \rangle \rangle,  \qquad \xi_1, \eta_1 \in \E, \xi_2, \eta_2 \in \F, \]
or, equivalently, 
\[ _{\E \otimes_A \F}\langle \xi_1 \otimes \xi_2, \eta_1 \otimes \eta_2 \rangle = \,_\E\langle \xi_1 \,_\F\langle \xi_2, \eta_ 2\rangle, \eta_1  \rangle,  \qquad \xi_1, \eta_1 \in \E, \xi_2, \eta_2 \in \F. \]

When $\E$ is a Hilbert $A$-bimodule which is self-dual as a right Hilbert $A$-module, we can equip the dual module $\widehat{\E}$ with the structure of a Hilbert $A$-bimodule by defining
\[ a \, \hat{\xi} = \widehat{\xi a^*}, \quad \hat{\xi} a = \widehat{a^* \xi }, \quad a \in A, \xi \in \E, \]
and
\[ _{\widehat{\E}}\langle \hat{\xi}, \hat{\eta} \rangle = \langle \xi, \eta \rangle_\E, \quad \langle \hat{\xi}, \hat{\eta} \rangle_{\widehat{\E}} = \,_\E \langle \xi, \eta \rangle, \quad \xi, \eta \in \E.\]
Mapping $\hat{\xi} \otimes \eta \to \hat{\xi}(\eta) = \langle \xi, \eta \rangle_\E$ extends to an injective Hilbert $A$-bimodule map $\widehat{\E} \otimes_A \E \to A$ which is surjective when $\E$ is right full. Similarly, $\eta \otimes \hat{\xi} \to \,_\E\langle \eta,\xi\rangle$ extends to an injective Hilbert $A$-bimodule map $\E \otimes_A \widehat{\E} \to A$ which is surjective if $\E$ is left full. When $\widehat{\E} \otimes_A \E \cong A \cong \E \otimes_A \widehat{\E}$, we say that $\E$ is invertible.

\subsection{Cuntz--Pimsner algebras} 

Cuntz--Pimsner algebras were originally introduced by Pimsner \cite{Pimsner1997} and subsequently generalised by Katsura \cite{Katsura2003}. They generalise both Cuntz--Krieger algebras and crossed products by $\mathbb{Z}$.

 \begin{definition}[{\cite[Definition 2.1]{Katsura2004}}]  \label{def:cov rep} Let $A$ and $B$ be $\mathrm{C}^*$-algebras and let $\mathcal{E}$ be a \mbox{$\mathrm{C}^*$-correspondence} over $A$ with structure map $\varphi_\E : A \to \mathcal{L}(\mathcal{E})$. A \emph{representation} $(\pi, \tau)$ of $\E$ on $B$ consists of a $^*$-homomorphism $\pi: A \to B$ and a linear map $\tau: \mathcal{E} \to B$ satisfying
 \begin{enumerate}
 \item $\pi(\langle \xi, \eta \rangle_\E) = \tau(\xi)^*\tau(\eta)$, for every $\xi, \eta  \in \E$;
 \item $\pi(a)\tau(\xi) = \tau(\varphi_\E(a)\xi)$, for every $\xi \in \E$, $a \in A$.
 \end{enumerate}

 Note that (1) and the $\mathrm{C}^*$-identity imply $\tau(\xi) \pi(a) = \tau(\xi a)$ for every $\xi \in \E$ and $a \in A$.
 
Let $\psi_{\tau}  : \mathcal{K}(\E) \to B$ be the $^*$-homomorphism defined on rank one operators by 
\[ \psi_{\tau}(\theta_{\xi, \eta}) = \tau(\xi)\tau(\eta)^*, \qquad \xi, \eta \in \E.\]
 We say that the representation $(\pi , \tau)$ is \emph{covariant} \cite[Definition 3.4]{Katsura2004} if in addition
\begin{enumerate}[resume]
\item $\pi(a) =  \psi_{\tau}(\varphi_\E(a))$, for every $a \in J_{\E}$. 
\end{enumerate}
\end{definition}

\begin{definition}[{\cite[Definition 3.5]{Katsura2004}}] Let $A$ be a $\mathrm{C}^*$-algebra and let $\mathcal{E}$ be a \mbox{$\mathrm{C}^*$-correspondence} over $A$. The \emph{Cuntz--Pimsner algebra of $\mathcal{E}$ over $A$}, denoted $\mathcal{O}_A(\mathcal{E})$ (or simply $\mathcal{O}(\E)$ if the $\mathrm{C}^*$-algebra $A$ is understood) is the $\mathrm{C}^*$-algebra generated by the universal covariant representation of $\E$. 
\end{definition}

By universality, we mean that the universal covariant representation $(\pi_u, \tau_u)$ satisfies the following: for any covariant representation $(\pi, \tau)$ of $\E$ there exists a surjective $^*$-homomorphism $\psi \colon \mathrm{C}^*(\pi_u, \tau_u) \rightarrow \mathrm{C}^*(\pi, \tau)$ such that $\pi=\psi \circ \pi_u$ and $\tau=\psi \circ \tau_u$. See \cite[Section 4]{Katsura2004} for the construction of a concrete universal covariant representation. 

In \cite{AEE:Cross}, covariant representations were defined for Hilbert $A$-bimodules as follows. Let $\E$ be a Hilbert $A$-bimodule with left and right inner products given by $_\E\langle \cdot, \cdot \rangle$ and $\langle \cdot, \cdot \rangle_\E$, respectively. A covariant representation $(\pi, \tau)$ on a $\mathrm{C}^*$-algebra $B$ consists of a $^*$-homomorphism $\pi : A \to B$ and linear map $\tau : \E \to B$
satisfying
\begin{enumerate}
\item $\tau(\xi)\pi(a) = \pi(\xi a)$, for every $a \in A$ and every $\xi \in \E$;
\item $\pi(a)\tau(\xi) = \pi(a\xi)$, for every $a \in A$ and every $\xi \in \E$;
\item  $\pi(\langle \xi, \eta \rangle_\E) = \tau(\xi)^*\tau(\eta)$, for every $\xi, \eta  \in \E$;
\item $\pi(_\E\langle \xi, \eta \rangle)  = \tau(\xi)\tau( \eta)^*$, for every $\xi, \eta \in \E$.
\end{enumerate}

In \cite{AEE:Cross}, the $\mathrm{C}^*$-algebra generated by the universal covariant representation was denoted $A \rtimes_{\E} \mathbb{Z}$ and called \emph{the crossed product of $A$ by the Hilbert bimodule $\E$}. It is easy to see that if $\E$ is a  $\mathrm{C}^*$-correspondence such that the structure map satisfies $\varphi(J_\E) = \mathcal{K}(\E)$ and $\mathcal{F}$ is the associated Hilbert $A$-bimodule, then $A \rtimes_{\mathcal{F}} \mathbb{Z} \cong \mathcal{O}(\E)$. 

Just as in the case of a $\mathrm{C}^*$-correspondence, one can construct a covariant representation of a Hilbert $A$-bimodule $\E$, see \cite{AEE:Cross} for details. It follows from the definition of a covariant representation that when $A$ is unital, $A \rtimes_\E \mathbb{Z}$ contains a $^*$-isomorphic copy of $A$ as a subalgebra. Furthermore,  there is a closed linear subset $E \subset A \rtimes_\E \mathbb{Z}$ satisfying $A E \subset E$ and $E A \subset E$ and $E^*E \subset A$, $EE^* \subset A$ which gives both $E$ and $E^*$ the structure of a Hilbert $A$-bimodule such that $E \cong \E$ and $E^* \cong \widehat{\E}$.

%%%%%%%%%%%%%%%%%%%%%%%%%%%%%%%%%%%%%%%%%%%%%%%%
%%%%%%%%%%%%%%%%%%%%%%%%%%%%%%%%%%%%%%%%%%%%%%%%
%%%%%%%%%%%%%%%%%%%%%%%%%%%%%%%%%%%%%%%%%%%%%%%%

\section{\texorpdfstring{$\mathrm{C}^*$}{C*}-correspondences over commutative \texorpdfstring{$\mathrm{C}^*$}{C*}-algebras}  \label{sec.Corr}

Let $X$ be a compact metric space. Here we are interested in the Cuntz--Pimsner algebras associated to $\mathrm{C}^*$-correspondences over $C(X)$. Of particular interest are those which are finitely generated projective as right Hilbert $C(X)$-modules.

\subsection{Sections of Hermitian vector bundles} First, we fix some notation. For further details on bundles, we direct the reader to \cite{Hus:fibre}. We denote a complex vector bundle over a locally compact metric space $X$ by $\mathscr{V}= [V, p, X]$, where $p : V \to X$  is a continuous surjective map and for every $x\in X$, the fibre $p^{-1}(x) \cong \mathbb{C}^{n_x}$ for some $n_x \in \mathbb{Z}_{\geq 0}$. By definition, a vector bundle is locally trivial, which is to say, for every $x \in X$ there exists an open neighbourhood $U$ of $x$ such that, for some $n \in \mathbb{Z}_{\geq 0}$, $\mathscr{V}|_U := [p^{-1}(U) \cong U \times \mathbb{C}^n, p|_{p^{-1}(U)}, U]$ is trivial. If there exists $n \in \mathbb{Z}_{\geq 0}$ such that $p^{-1}(x) \cong \mathbb{C}^n$ for every $x \in X$, then we say that $\mathscr{V}$ has constant rank, and say that $n$ is the rank of $\mathscr{V}$. Note the fibres of any connected component of $X$ must have the same dimension. In particular, if $X$ is connected, then $\mathscr{V}$ has constant rank.

A \emph{chart} for a complex vector bundle $\mathscr{V}= [V, p, X]$ is an open subset of $X$ together with an isometric isomorphism $h : U \times \mathbb{C}^{n_U} \to \mathscr{V}|_U$ where $\mathbb{C}^{n_U} \cong p^{-1}(x)$ for any $x \in U$. An \emph{atlas} for $\mathscr{V}$ is a family of charts $\{ h_i : U_i \times \mathbb{C}^{n_i} \to \mathscr{V}|_{U_i} \}_{i \in I}$ such that the $U_i$ cover $X$. Since $\mathscr{V}$ is a complex vector bundle over a locally compact Hausdorff space, we can equip it with a Hermitian structure, from which it follows that any atlas $\{ h_i : U_i \times \mathbb{C}^{n_i} \to \mathscr{V}|_{U_i} \}_{i \in I}$ comes with a \emph{system of transition functions} $\{g_{i,j} : U_i \cap U_j \to U(n_i) = U(n_j)\}$, where $U(n_i)$ denotes the $n_i \times n_i$ unitary group. The transition functions satisfy $h_j(x, v) = h_i(x, g_{i,j}(x)v)$ for $(x, v) \in (U_i \cap U_j) \times \mathbb{C}^{n_i}$ and for every $x \in U_i \cap U_j \cap U_k$, the relation $g_{i,k}(x) = g_{i,j}(x) g_{j,k}(x)$ holds. 

We recall here the Serre--Swan theorem, see~\cite[Theorem 2]{Swa:bundles}, which tells us what an algebraically finitely generated projective right $C(X)$-module looks like. (By saying $\E$ is algebraically finitely generated projective, we mean as a $C(X)$-module, that is, we do not assume there is any inner product on $\E$.) Given a vector bundle $\mathscr{V} = [V, p, X]$, we denote by $\Gamma(\mathscr{V})$ the $C(X)$-module of continuous sections of $\mathscr{V}$.

\begin{theorem}[Serre--Swan] \label{SerreSwan}
Let $X$ be a compact metric space and $\mathcal{E}$ be an algebraically finitely generated projective right $C(X)$-module. Then there exists a vector bundle $\mathscr{V} = [V, p, X]$ such that $\E \cong \Gamma(\mathscr{V})$ as right $C(X)$-modules.
\end{theorem}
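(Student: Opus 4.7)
The plan is to use the standard dictionary between finitely generated projective modules and projections in matrix algebras, then upgrade an abstract projection in $M_n(C(X))$ to a geometric vector bundle. First I would invoke the characterisation of finitely generated projective right $C(X)$-modules: because $\mathcal{E}$ is algebraically finitely generated projective, there exist $n \in \mathbb{N}$ and an idempotent $e \in M_n(C(X))$ such that $\mathcal{E} \cong e \cdot C(X)^{\oplus n}$ as right $C(X)$-modules. Using a standard functional-calculus perturbation, $e$ can be replaced by a self-adjoint projection without changing the isomorphism class of the module, so we may assume $e = e^* = e^2$.

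Next I would build the candidate bundle. View $e$ as a norm-continuous function $e : X \to M_n(\mathbb{C})$ whose values are orthogonal projections. Define
\[ V = \{(x, v) \in X \times \mathbb{C}^n : e(x) v = v\}, \qquad p(x,v) = x, \]
so that the fibre over $x$ is the range of the projection $e(x)$. The key technical step is to prove local triviality: fix $x_0 \in X$ and set $p_0 = e(x_0)$. Then for $x$ in a small neighbourhood $U$ of $x_0$, the operator $u(x) = e(x)p_0 + (1-e(x))(1-p_0)$ is invertible (since $u(x_0) = 1$ and $u$ is continuous), and one checks that $u(x)p_0 = e(x)u(x)$. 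Polar-decomposing $u(x)$ in $M_n(\mathbb{C})$ (or using $u(x)(u(x)^*u(x))^{-1/2}$) produces a norm-continuous family of unitaries $w(x) \in U(n)$ with $w(x_0) = 1$ and $w(x) p_0 w(x)^* = e(x)$. This $w$ induces a homeomorphism $U \times p_0\mathbb{C}^n \to p^{-1}(U)$, $(x,v) \mapsto (x, w(x)v)$, which is fibrewise linear. Hence $\mathscr{V} = [V, p, X]$ is a (complex, Hermitian) vector bundle; its rank on a connected component equals $\tr(e(x))$ for any $x$ in that component.

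Finally I would produce the module isomorphism $\mathcal{E} \cong \Gamma(\mathscr{V})$. A continuous section of $\mathscr{V}$ is exactly a continuous map $s : X \to \mathbb{C}^n$ satisfying $e(x)s(x) = s(x)$ for every $x \in X$, i.e.\ an element of $e \cdot C(X)^{\oplus n}$. The map
\[ \Phi : e \cdot C(X)^{\oplus n} \longrightarrow \Gamma(\mathscr{V}), \qquad \Phi(e\xi)(x) = (x, e(x)\xi(x)) \]
is plainly a well-defined right $C(X)$-module homomorphism, and its inverse sends a section $s$ to the vector-valued function $x \mapsto s(x) \in \mathbb{C}^n$, which lies in $e \cdot C(X)^{\oplus n}$ by construction. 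Composing with the original isomorphism $\mathcal{E} \cong e \cdot C(X)^{\oplus n}$ gives $\mathcal{E} \cong \Gamma(\mathscr{V})$.

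The main obstacle is the local triviality step, i.e.\ showing that a norm-continuous family of projections in $M_n(\mathbb{C})$ is locally unitarily equivalent to a constant projection; the rest of the argument is formal once this geometric input is secured. Everything else (replacing idempotents by projections, identifying sections with elements of $eC(X)^{\oplus n}$, checking module-linearity of $\Phi$) is routine and relies only on compactness of $X$ and basic Hilbert-module facts already recalled in Section~\ref{sec.prelim}.
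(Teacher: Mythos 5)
Your argument is correct and is essentially the standard proof of the Serre--Swan theorem; the paper gives no proof of its own, citing \cite[Theorem 2]{Swa:bundles}, and your route (idempotent $e\in M_n(C(X))$, replace by a self-adjoint projection, form the subbundle $V=\{(x,v): e(x)v=v\}$, prove local triviality via $u(x)=e(x)p_0+(1-e(x))(1-p_0)$ and its polar part, identify $\Gamma(\mathscr{V})$ with $eC(X)^{\oplus n}$) is exactly the argument in that reference and in standard texts. All the key verifications check out: $u(x_0)=1$, $u(x)p_0=e(x)u(x)$, $u^*u$ commutes with $p_0$ so the polar unitary conjugates $p_0$ to $e(x)$, and sections of $\mathscr{V}$ are precisely the elements of $eC(X)^{\oplus n}$.
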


 If $\mathscr{V}$ is a trivial rank $n$ vector bundle, then any continuous section $\xi \in \Gamma(\mathscr{V})$ is of the form $(x, f(x))$ for $x \in X$ and $f : X \to \mathbb{C}^{n}$ a continuous function.  If $\mathscr{V}$ is not trivial, then since $X$ is compact, there exist $N \in \mathbb{Z}_{>0}$ and an atlas consisting of $N$ charts $\{h_j :  U_j \times \mathbb{C}^{n_j}\to \mathscr{V}|_{U_j}\}_{j=1, \dots, N}$. In this case any continuous function $f \in C_0(U_j, \mathbb{C}^{n_j})$ allows us to define a section $\xi \in \Gamma(\mathscr{V})$ by setting 
 \[
 \xi(x) := \left \{ \begin{array}{cl} h_j(x, f(x)) & x \in U_j, \\
 0 & x \in X \setminus U_j.
 \end{array} \right.
 \] 
By abuse of notation, when we refer to such sections $\xi(x)$ we will often write $h_j(x, f(x))$.  If $\xi \in \Gamma(\mathscr{V})$ is an arbitrary section, then we can always find continuous functions $f_j \in C_0(U_j, \mathbb{C}^{n_j})$, $1 \leq j \leq N$, such that $\xi(x) = \sum_{j=1}^N h_j(x, f_j(x))$. Note that the $f_j$ are in general not unique.

By \cite[Lemma 2]{Swa:bundles}, the right $C(X)$-module $\Gamma(\mathscr{V})$ admits a right $C(X)$-valued inner product defined as follows. Let $\gamma_1, \dots, \gamma_N$ be a partition of unity subordinate to $U_1, \dots, U_N$. Define
 \begin{equation}\label{innerproduct} \langle \xi, \eta \rangle_{\Gamma(\mathscr{V})} (x) := \sum_{j=1}^N \gamma_j(x) \langle h_j^{-1}(\xi(x)), h_j^{-1}(\eta(x))\rangle_{\mathbb{C}^{n_j}},
 \end{equation} 
where, by abuse of notation, $h_j^{-1}(\xi(x))$ is the vector in $\mathbb{C}^{n_j}$ obtained after applying the isomorphism $\{x \} \times \mathbb{C}^{n_j} \cong \mathbb{C}^{n_j}$ to $h_j^{-1}$, and $\langle \cdot, \cdot \rangle_{\mathbb{C}^{n_j}}$ denotes the usual inner product in the vector space $\mathbb{C}^{n_j}$. This makes $\Gamma(\mathscr{V})$ into a right Hilbert $C(X)$-module. 

Since the $U_j \times \mathbb{C}^{n_j}$ are trivial vector bundles, there are pairwise orthonormal generators $s_{j, 1}, \dots, s_{j,n_j}$ of $\mathbb{C}^{n_j}$ such that
\[ \xi_{j,k} (x) = \left \{ \begin{array}{ll} h_j(x, s_{j,k} \gamma_j(x)^{1/2}) & x \in U_j, \\ 0 & x \in X \setminus U_j, \end{array} \right. \]
is a continuous section. The $\xi_{j,k}$ form a \emph{Parseval frame}, also called a \emph{normalised tight frame}, which is to say, they satisfy the reconstruction formula
\[ \xi = \sum_{j=1}^N\sum_{k=1}^{n_j} \xi_{j,k} \langle \xi_{j,k}, \xi \rangle_{\Gamma(\mathscr{V})}, \qquad \xi \in \Gamma(\mathscr{V}) ,\]
see \cite{FraLar:Frames} for further details. Rewriting this in terms of rank one operators (defined in \eqref{eqn.rankoneop}), we have 
\[ \sum_{j = 1}^N \sum_{k=1}^{n_j} \theta_{\xi_{j,k}, \xi_{j,k}} (\xi) = \xi, \qquad \xi \in \Gamma(\mathscr{V}),\]
 so that $\id_{\Gamma(\mathscr{V})} \in \mathcal{K}(\Gamma(\mathscr{V}))$. It follows that $\Gamma(\mathscr{V})$ is projective as a right Hilbert $C(X)$-module \cite[Remark 15.4.3]{Weg:k-theory}, and so there are $n \in \mathbb{Z}_{>0}$ and a projection $P \in \mathcal{L}(A^{\oplus n})$ such that $\Gamma(\mathscr{V})  \cong P A^{\oplus n}$, where the isomorphism is via a unitary $C(X)$-module map \cite[Theorem 15.4.2]{Weg:k-theory}. In particular, the inner product given in \eqref{innerproduct} does not depend on the choice of atlas. In fact, we have the following:

\begin{proposition} \label{prop:Mod from VB}
Let $X$ be an infinite compact metric space. Suppose that $\E$ is an algebraically finitely generated right Hilbert  $C(X)$-module. Then there exists a vector bundle $\mathscr{V} = [V, p , X]$ and a unitary isomorphism $U : \E \to \Gamma(\mathscr{V})$, where $\Gamma(\mathscr{V})$ is equipped with an inner product as defined in \eqref{innerproduct} with respect to any choice of atlas for $\mathscr{V}$.
\end{proposition}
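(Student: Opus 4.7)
My strategy is to first use Serre--Swan to produce a vector bundle whose section module is $C(X)$-linearly isomorphic to $\E$, and then show that any two Hilbert $C(X)$-module inner products on a finitely generated projective $C(X)$-module are related by conjugation with a positive invertible adjointable operator. Composing this with the Serre--Swan isomorphism will yield the required unitary.

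First, since $\E$ is an algebraically finitely generated Hilbert $C(X)$-module, by \cite[Corollary 15.4.8]{Weg:k-theory} it is finitely generated projective as a right $C(X)$-module. By the Serre--Swan theorem (\Cref{SerreSwan}), there is a vector bundle $\mathscr{V} = [V, p, X]$ together with a right $C(X)$-module isomorphism $\phi : \E \to \Gamma(\mathscr{V})$ (at this stage, purely algebraic). Fix any atlas for $\mathscr{V}$ and equip $\Gamma(\mathscr{V})$ with the inner product $\langle \cdot, \cdot \rangle_{\Gamma(\mathscr{V})}$ from \eqref{innerproduct}, with associated Parseval frame $\{\xi_{j,k} : 1\le j \le N,\ 1 \le k \le n_j\}$ as constructed in the preceding discussion. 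Transport the inner product on $\E$ along $\phi$ to obtain a second right $C(X)$-valued inner product $\langle\cdot,\cdot\rangle'$ on $\Gamma(\mathscr{V})$ making it into a Hilbert $C(X)$-module.

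The key step is to produce a positive invertible $T \in \mathcal{L}(\Gamma(\mathscr{V}))$ (with respect to $\langle\cdot,\cdot\rangle_{\Gamma(\mathscr{V})}$) satisfying
\[ \langle \xi, \eta \rangle' = \langle \xi, T\eta\rangle_{\Gamma(\mathscr{V})}, \qquad \xi,\eta \in \Gamma(\mathscr{V}). \]
Define $T$ explicitly by $T\eta := \sum_{j,k} \xi_{j,k} \langle \xi_{j,k}, \eta \rangle'$. Using the reconstruction formula for the frame $\{\xi_{j,k}\}$, a direct calculation shows the displayed identity, and it follows immediately that $T$ is self-adjoint and positive with respect to $\langle \cdot,\cdot\rangle_{\Gamma(\mathscr{V})}$, since $\langle \eta, T\eta\rangle_{\Gamma(\mathscr{V})} = \langle \eta, \eta \rangle' \ge 0$.

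The main obstacle is the invertibility of $T$, which is where both inner products must genuinely enter. To handle this, I would run the symmetric construction: since $(\Gamma(\mathscr{V}), \langle\cdot,\cdot\rangle')$ is also a finitely generated projective Hilbert $C(X)$-module, by the same Parseval frame argument used in the paragraph preceding \eqref{innerproduct} it admits its own Parseval frame $\{\eta_\ell\}$ satisfying $\xi = \sum_\ell \eta_\ell \langle \eta_\ell, \xi\rangle'$. Define $S \in \mathcal{L}(\Gamma(\mathscr{V}))$ by $S\xi := \sum_\ell \eta_\ell \langle \eta_\ell, \xi\rangle_{\Gamma(\mathscr{V})}$. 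The same argument shows that $S$ is positive and adjointable with respect to $\langle\cdot,\cdot\rangle'$, and that $\langle \xi, S\eta \rangle' = \langle \xi, \eta \rangle_{\Gamma(\mathscr{V})}$. Combining the two identities gives $\langle \xi, TS\eta\rangle_{\Gamma(\mathscr{V})} = \langle \xi, \eta\rangle_{\Gamma(\mathscr{V})}$ for all $\xi,\eta$, and symmetrically $\langle \xi, ST\eta \rangle' = \langle \xi, \eta\rangle'$. Non-degeneracy of both inner products then forces $TS = ST = \id_{\Gamma(\mathscr{V})}$, so $T$ is invertible.

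With $T$ positive and invertible, the continuous functional calculus gives a positive invertible $T^{1/2} \in \mathcal{L}(\Gamma(\mathscr{V}))$. Setting $U := T^{1/2} \circ \phi : \E \to \Gamma(\mathscr{V})$, we have
\[ \langle U\xi, U\eta\rangle_{\Gamma(\mathscr{V})} = \langle T^{1/2}\phi(\xi), T^{1/2}\phi(\eta) \rangle_{\Gamma(\mathscr{V})} = \langle \phi(\xi), T\phi(\eta)\rangle_{\Gamma(\mathscr{V})} = \langle \phi(\xi), \phi(\eta)\rangle' = \langle \xi, \eta\rangle_\E, \]
so $U$ is a $C(X)$-linear isometry between finitely generated projective Hilbert modules, and it is bijective since both $T^{1/2}$ and $\phi$ are; hence $U$ is unitary. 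The independence of the resulting Hilbert module $(\Gamma(\mathscr{V}), \langle\cdot,\cdot\rangle_{\Gamma(\mathscr{V})})$ from the choice of atlas, already noted in the text, guarantees the ``with respect to any choice of atlas'' clause.
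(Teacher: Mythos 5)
Your proof is correct. The paper itself gives no written proof of this proposition: it presents it as a consequence of the preceding discussion, namely that $\E$ is finitely generated projective as a Hilbert module by \cite[Corollary 15.4.8]{Weg:k-theory}, that the Parseval frame puts $\id_{\Gamma(\mathscr{V})}$ in $\mathcal{K}(\Gamma(\mathscr{V}))$, and that \cite[Theorem 15.4.2]{Weg:k-theory} then identifies each of $\E$ and $\Gamma(\mathscr{V})$ unitarily with a corner $PC(X)^{\oplus n}$, so the Serre--Swan algebraic isomorphism upgrades to a unitary one. You instead prove the underlying uniqueness statement directly: you transport the inner product of $\E$ to $\Gamma(\mathscr{V})$, build the Gram operator $T$ from the Parseval frame, show it is positive and (via the symmetric operator $S$) invertible, and conjugate by $T^{1/2}$. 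This is a genuinely more self-contained route -- it replaces the citation of Wegge-Olsen's structure theorem by an explicit intertwiner -- at the cost of a few routine verifications the paper never has to make. The only point you pass over silently is that $T$ is bounded, hence adjointable, with respect to $\langle\cdot,\cdot\rangle_{\Gamma(\mathscr{V})}$: since each map $\eta \mapsto \langle \xi_{j,k},\eta\rangle'$ is a priori only bounded for the transported norm, you need the standard fact that any two Hilbert $C(X)$-module norms on a finitely generated module are equivalent (open mapping theorem applied to the surjections $C(X)^{\oplus n} \twoheadrightarrow \Gamma(\mathscr{V})$); once that is noted, symmetry of $T$ gives adjointability and the functional calculus applies. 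With that one sentence added, your argument is complete and correct.
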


When we refer to the right Hilbert $C(X)$-module $\Gamma(\mathscr{V})$, we will always mean the module  $\Gamma(\mathscr{V})$ equipped with the inner product coming from the Hermitian structure of $\mathscr{V}$ as in \eqref{innerproduct}. An analogous version of \Cref{prop:Mod from VB} also holds for left Hilbert $C(X)$-modules. 

These observations lead us to the following examples of $\mathrm{C}^*$-correspondences over $C(X)$ and their associated Cuntz--Pimsner algebras. 

\begin{examples} \label{CPexamples} Let $X$ be a compact metric space.
\begin{enumerate}
\item Let $\mathscr{V} = [V, p, X]$ be a line bundle, that is, the fibre at $x$ is $\mathbb{C}$ for every $x \in X$. Define $\varphi : C(X) \to \mathcal{K}(\Gamma(\mathscr{V}))$ by $\varphi(f)(\xi) = \xi f$. Note that in this case $C(X) \cong \mathcal{K}(\Gamma(\mathscr{V}))$ so $\Gamma(\mathscr{V})$ has the structure of a Hilbert $C(X)$-bimodule and hence $A := \mathcal{O}(\Gamma(\mathscr{V})) \cong C(X) \rtimes_{\Gamma(\mathscr{V})} \mathbb{Z}$. Since $A$ is generated by $C(X)$ and $\Gamma(\mathscr{V})$, it is a commutative $\mathrm{C}^*$-algebra. The spectrum of $\mathcal{O}(\Gamma(\mathscr{V}))$ is the circle bundle associated to $\mathscr{V}$ \cite[Proposition 4.10]{Vasselli2003}.
\item Let $\mathscr{V} = [X \times \mathbb{C}^n, p, X]$ be a trivial bundle with constant rank $n >1$. Again, let $\varphi : C(X) \to \mathcal{K}(\Gamma(\mathscr{V}))$ be given by $\varphi(f)(\xi) = \xi f$.  Then $\Gamma(\mathscr{V})$ has $n$ globally defined continuous sections $\xi_1, \dots, \xi_n$ which satisfy $\langle \xi_i, \xi_j \rangle_{\Gamma(\mathscr{V})}(x) = \delta_{i,j}$ and generate $\Gamma(\mathscr{V})$ as a right Hilbert $C(X)$-module. Thus if $\xi \in \Gamma(\mathscr{V})$ then 
\[ \xi = \sum_{i=1}^n\xi_i \langle \xi_i, \xi \rangle_{\Gamma(\mathscr{V})}.\]
Thus $\sum_{i=1}^n \xi_i \xi_i^* = 1$, so $\mathcal{O}(\Gamma(\mathscr{V})) \cong C(X, \mathcal{O}_n)$.
\item Let $\E$ be the right Hilbert $C(X)$-module of sections associated to a trivial line bundle $\mathscr{V} = [X \times \mathbb{C}, p, X]$ and let $\alpha : X\to X$ be a homeomorphism. Define $\varphi : C(X) \to \mathcal{K}(\Gamma(\mathscr{V}))$ by $\varphi(f)(\xi) = \xi f \circ \alpha$.  Since $\mathscr{V}$ is trivial, $\E$ is generated by a single element $\xi$ satisfying $1 = \langle \xi, \xi \rangle_\E = \,_\E\langle \xi, \xi  \rangle$. This gives us a unitary $u \in \mathcal{O}(\mathcal{E}) = C(X) \rtimes_{\mathcal{E}} \mathbb{Z}$. If $f \in C(X)$, then $ufu^* = f\circ\alpha^{-1}$. It follows that $\mathcal{O}(\E) \cong C(X) \rtimes_{\alpha} \mathbb{Z}$.
\end{enumerate}
\end{examples}

Generalising the above examples, we have the following, the main focus of this paper. 

\begin{example} Let $X$ be a compact metric space, $\mathscr{V}$ a vector bundle over $X$ and $\alpha : X \to X$ a homeomorphism.  Denote by $\Gamma(\mathscr{V}, \alpha)$ the $\mathrm{C}^*$-correspondence which has right Hilbert $C(X)$-module structure given by $\Gamma(\mathscr{V})$ and structure map $\varphi : C(X) \to \mathcal{K}(\Gamma(\mathscr{V}, \alpha))= \mathcal{K}(\Gamma(\mathscr{V}))$ given by $\varphi(f) (\xi) = \xi f \circ \alpha.$ Let us show that this gives a well-defined $^*$-homomorphism into $\mathcal{K}(\Gamma(\mathscr{V}, \alpha))$. That the map is an algebra homomorphism is clear. To see that it is a $^*$-homomorphism, we calculate
\begin{align*}
\langle \varphi(f) (\xi), \eta \rangle_{\Gamma(\mathscr{V}, \alpha)} (x) &= \langle \xi f \circ \alpha, \eta\rangle_{\Gamma(\mathscr{V}, \alpha)}(x) = \overline{\langle \eta, \xi f \circ \alpha \rangle}_{\Gamma(\mathscr{V}, \alpha)}(x) \\
&= \overline{(\langle \eta, \xi \rangle_{\Gamma(\mathscr{V}, \alpha)} f \circ \alpha)}(x) =  \langle \xi, \eta \rangle_{\Gamma(\mathscr{V}, \alpha)} \overline{f \circ \alpha} (x)\\
&= \langle \xi, \eta \overline{f \circ \alpha} \rangle_{\Gamma(\mathscr{V}, \alpha)} (x)= \langle \xi, \varphi(\overline{f}) (\eta) \rangle_{\Gamma(\mathscr{V}, \alpha)}(x), 
\end{align*}
for every $f \in C(X)$, every $x \in X$ and every $\xi, \eta \in \Gamma(\mathscr{V}, \alpha)$. Thus $\varphi(\overline{f}) = \varphi(f)^*$. Since the map $\varphi : C(X) \to \mathcal K(\Gamma(\mathscr{V},\alpha))$ is determined by the homeomorphism, to ease notation we will usually write $f \xi$ to mean $\varphi(f) (\xi)$, for $f \in C(X)$ and $\xi \in \Gamma(\mathscr{V}, \alpha)$, unless we require $\varphi$ to avoid confusion.
\end{example}

\subsection{Properties of \texorpdfstring{$\mathrm{C}^*$}{C*}-correspondences and Hilbert bimodules associated to modules of sections} 

Let $\E = \Gamma(\mathscr{V}, \alpha)$ where $\mathscr{V} = [V, p, X]$ is a vector bundle and $\alpha : X \to X$ is a homeomorphism. When $\mathscr{V}$ is a line bundle, as in Example \ref{CPexamples}~(1) where $\alpha = \id$, $\E$ can be given the structure of a Hilbert $C(X)$-bimodule. Conversely, if $\mathscr{V}$ has a fibre of rank greater than one, $\mathcal{K}(\E) \ncong C(X)$, so $\E$ only admits a $\mathcal{K}(\E)$-$C(X)$-bimodule structure, see \Cref{prop.CharOfFull} below and, for example, \cite[Proposition 5.18]{Katsura2004}.

\begin{proposition}\label{prop.LeftIP} 
Let $X$ be a compact metric space, $\alpha: X \to X$ a homeomorphism, and $\mathscr{V} = [V,p,X]$ a line bundle. Then the $\mathrm{C}^*$-correspondence $\E = \Gamma(\mathscr{V}, \alpha)$ is a Hilbert $C(X)$-bimodule satisfying
\[ _\E\langle \xi, \eta \rangle =  \varphi^{-1}(\theta_{\xi, \eta}) =  \langle \eta, \xi \rangle_\E \circ \alpha^{-1}, \qquad \xi, \eta \in \E,\]
where $\varphi : C(X) \to \mathcal{K}(\E)$ is the structure map of $\E$. 
\end{proposition}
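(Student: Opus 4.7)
The plan is to invoke the general correspondence-to-bimodule construction recalled in Section \ref{subsec.CorBimod}: if $\varphi(J_\E) = \mathcal{K}(\E)$, then $\E$ is a Hilbert bimodule with $_\E\langle \xi, \eta \rangle := \varphi^{-1}(\theta_{\xi,\eta})$. The first step is therefore to show that $\varphi\colon C(X) \to \mathcal{K}(\E)$ is a $^*$-isomorphism, which makes $\varphi^{-1}$ literally defined on all rank-one operators and forces $J_\E = C(X)$.

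Since $\E$ is finitely generated projective, $\mathcal{K}(\E) = \mathcal{L}(\E)$. Any adjointable operator on $\E$ is $C(X)$-linear and restricts to a scalar on each one-dimensional fibre $p^{-1}(x) \cong \mathbb{C}$; this scalar depends continuously on $x$, since applying the operator to a locally non-vanishing section produces a continuous section. This yields the canonical identification $\mathcal{K}(\E) \cong C(X)$ via $M_g \mapsto g$, where $M_g \zeta := \zeta g$. The structure map $\varphi$ factors as the pullback automorphism $f \mapsto f \circ \alpha$ of $C(X)$ followed by this identification, and is therefore a $^*$-isomorphism onto $\mathcal{K}(\E)$. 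In particular $\ker \varphi = 0$, hence $J_\E = (\ker\varphi)^{\perp} \cap \varphi^{-1}(\mathcal{K}(\E)) = C(X)$, and the general construction equips $\E$ with a Hilbert $C(X)$-bimodule structure.

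It then remains to identify the abstract expression $\varphi^{-1}(\theta_{\xi,\eta})$ with the concrete function $\langle \eta, \xi \rangle_\E \circ \alpha^{-1}$. Equivalently, I would verify that $\varphi(\langle \eta, \xi \rangle_\E \circ \alpha^{-1}) = \theta_{\xi,\eta}$ by evaluating both sides at an arbitrary $\zeta \in \E$; unpacking the definition of $\varphi$ reduces the claim to the pointwise identity
\[ \zeta \cdot \langle \eta, \xi \rangle_\E = \xi \cdot \langle \eta, \zeta \rangle_\E. \]
Working in a chart of $\mathscr{V}$ around a given $x \in X$, the three sections are represented by complex scalars, the inner product \eqref{innerproduct} becomes the usual Hermitian form on $\mathbb{C}$ weighted by a positive function, and the identity follows at once from commutativity of multiplication in $\mathbb{C}$. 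The only mild obstacle is bookkeeping, namely checking that the pointwise identity is chart-independent and that both products are being interpreted consistently as sections rather than scalars. This is exactly where the line-bundle hypothesis is essential: for higher rank $\mathcal{K}(\E)$ is no longer commutative, and the swap of $\zeta$ and $\xi$ in the displayed identity fails.
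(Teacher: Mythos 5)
Your proposal is correct and follows essentially the same route as the paper: both reduce to the general correspondence-to-bimodule criterion $\varphi(J_\E)=\mathcal{K}(\E)$ recalled in Section~\ref{subsec.CorBimod} and then verify $\theta_{\xi,\eta}=\varphi(\langle\eta,\xi\rangle_\E\circ\alpha^{-1})$ by a local computation in charts, where the line-bundle hypothesis makes the fibres one-dimensional and the identity $\zeta\,\langle\eta,\xi\rangle_\E=\xi\,\langle\eta,\zeta\rangle_\E$ follows from commutativity of $\mathbb{C}$. The only cosmetic difference is that you obtain $\varphi(J_\E)=\mathcal{K}(\E)$ by identifying $\mathcal{L}(\E)\cong C(X)$ fibrewise, whereas the paper deduces it from the rank-one formula together with right-fullness of $\E$; both are sound.
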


\begin{proof}
Let $\{h_j :U_j \times \mathbb{C} \to \mathscr{V}|_{U_j}\}_{j=1,\dots, n}$ be an atlas for $\mathscr{V}$, $\{ g_{i,j} : U_i \cap U_j \to U(1)\}_{i,j=1, \dots, n}$ the corresponding transition functions, and let $\{\gamma_j\}_{j=1,\dots,n}$ be a partition of unity subordinate to the open cover $\{U_j\}_{j=1,\dots n}$. 

Let $\xi (x) := h_i(x, r(x))$, $\eta(x) := h_j(x, s(x))$, and $\zeta(x):=h_k(x, t(x))$, where $r \in C_0(U_i)$, $s \in C_0(U_j)$ and $t \in C_0(U_k)$, and suppose that $U_i \cap U_j \cap U_k \neq \emptyset$. Then for $x \in U_i \cap U_j \cap U_k$ we have
\begin{align*}
 \langle \eta,\zeta  \rangle_\E (x) &= \sum_{l=1}^n \gamma_l (x) \langle h_l^{-1}(\eta(x)), h_l^{-1}(\zeta(x)) \rangle_{\mathbb{C}} \\
 &=  \sum_{l=1}^n \gamma_l(x) \langle g_{l,j}(x) s(x), g_{l,k}(x) t(x) \rangle_\mathbb{C} \\
 &= \sum_{l=1}^n \gamma_l(x) \langle s(x), g_{j,l}(x) g_{l,k}(x) t(x) \rangle_\mathbb{C} \\
 &= \sum_{l=1}^n \gamma_l(x) \langle s(x), g_{j,k}(x) t(x) \rangle_\mathbb{C} \\
 &= \overline{s(x)} g_{j,k}(x) t(x).
\end{align*}
Similarly, $ \langle \xi, \eta  \rangle_\E (x) = \overline{r(x)} g_{i,j}(x) s(x).$ 
Thus
\begin{align*}
\theta_{\xi, \eta}(\zeta)(x) &= \xi \langle \eta,\zeta  \rangle_\E (x) \\
&= h_i(x, r(x)) \overline{s(x)} g_{j,k}(x) t(x) = h_i(x, g_{j,k} r \overline{s} t(x)),
\end{align*}
while 
\begin{align*}
\varphi(\langle \eta, \xi \rangle_\E \circ \alpha^{-1}) \zeta (x) &= \zeta (x)\langle \eta, \xi \rangle_\E(x) \\
    &= h_i(x, g_{i,k}(x) t(x))\overline{s(x)} g_{j,i}(x) r(x) = h_i(x, \overline{s} g_{j,i} r g_{i,k} t(x)) \\
    &= h_i(x, \overline{s} g_{j,k} r t(x)).
\end{align*}
Hence $\theta_{\xi, \eta}(\zeta) = \varphi(\langle \eta, \xi \rangle_\E \circ \alpha^{-1}) \zeta$. 

If $\xi, \eta, \zeta \in \E$ are arbitrary, there are $n \in \mathbb{Z}_{>0}$ and $r_j, s_j, t_j \in C_0(U_j)$, $1 \leq j \leq n$ such that $\xi(x) = \sum_{j=1}^n h_j(x, r_j(x))$, $\eta(x) = \sum_{j=1}^n h_j(x, s_j(x))$ and $\zeta(x) = \sum_{j=1}^n h_j(x, t_j(x))$. That $\theta_{\xi, \eta} =\varphi(\langle \eta, \xi \rangle_\E \circ \alpha^{-1})$ now follows from the sesquilinearity of the inner products. Since $\E$ is full as a right Hilbert $C(X)$-module and $\alpha$ is a homeomorphism, it follows that with $J_\E = \varphi^{-1}(\mathcal{K}(\E)) \cap \ker(\varphi)^\perp$, we have $\varphi(J_\E) = \varphi(C(X)) = \mathcal{K}(\E)$. That $\E$ admits the structure of a Hilbert $C(X)$-bimodule with 
\[ _\E\langle \xi, \eta \rangle = \varphi^{-1}(\theta_{\xi,\eta}) = \langle \eta, \xi \rangle_\E \circ \alpha^{-1}, \]
now follows from the discussion in Section~\ref{subsec.CorBimod}.
\end{proof}

 For a Hilbert $C(X)$-bimodule $\E$ we will write $\E_{C(X)}$ if we are considering $\E$ as a right Hilbert $C(X)$-module, and $_{C(X)}\E$ when considering $\E$ as a left Hilbert $C(X)$-module.
 
 If $\mathscr{W} = [W, q, X]$ is a vector bundle, then we can endow $\Gamma(\mathscr{W})$ with a left $C(X)$-module structure via pointwise multiplication and if $\{ k_j : U_j \times \mathbb{C}^{n_j} \to \mathscr{W}|_{U_j}\}$ is an atlas for $\mathscr{W}$ with $\gamma_j$, $1 \leq j \leq n$ a partition of unity subordinate to $U_j$, $1\leq j \leq n$, then we make it into a left Hilbert $C(X)$-module using the inner product
 \[ _{\Gamma(\mathscr{W})}\langle \xi, \eta\rangle (x) := \sum_{j=1}^n \gamma_j(x) \,\,_{\mathbb{C}^{n_j}}\!\langle h_j^{-1}(\xi(x)), h_j^{-1}(\eta(x)) \rangle,\]
 for $\xi, \eta \in {}_{C(X)}\Gamma(\mathscr{W})$. As before, this does not depend on the choice of atlas. 
 
 Let $\mathscr{V}=[V, p, X]$ be a vector bundle and $f \colon Y \to X$ be a continuous map. The induced vector bundle of $\mathscr{V}$ under $f$, denoted by $f^*(\mathscr{V})$, has as base space $Y$, as total space $f^*(V)$ which is the subspace of all $ (v, y) \in V \times Y$  with $f(y)=p(v)$, and as a projection the map $f^*(V) \to Y: (v,y)\mapsto y$.
 
\begin{lemma} \label{lem.lefty}
 Let $\E = \Gamma(\mathscr{V}, \alpha)$ for $\mathscr{V} = [V, p,X]$ a line bundle over $X$ and $\alpha : X \to X$ a homeomorphism. Then $\Gamma((\alpha^{-1})^*\mathscr{V}) \cong \,_{C(X)} \E$ as left Hilbert $C(X)$-modules.
\end{lemma}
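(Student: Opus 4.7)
The plan is to exhibit an explicit map $\Phi \colon {}_{C(X)}\E \to \Gamma((\alpha^{-1})^*\mathscr{V})$ given by $\Phi(\xi)(y) = \xi(\alpha^{-1}(y))$. This is well defined as a section of $(\alpha^{-1})^*\mathscr{V}$ because $\xi(\alpha^{-1}(y))$ lies in the fibre $V_{\alpha^{-1}(y)}$, which is by construction the fibre of $(\alpha^{-1})^*\mathscr{V}$ at $y$. Continuity is clear, and $\Phi$ is bijective with inverse $\sigma \mapsto \sigma \circ \alpha$.

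The verification that $\Phi$ is a left $C(X)$-module map is immediate from the definition of the structure map of $\E$: for $f \in C(X)$, the left action on $\E$ is $(f \cdot \xi)(x) = \xi(x)\, f(\alpha(x))$, so
\[ \Phi(f \cdot \xi)(y) = \xi(\alpha^{-1}(y))\, f(\alpha(\alpha^{-1}(y))) = f(y)\, \Phi(\xi)(y), \]
which is exactly the pointwise left $C(X)$-action on sections of $(\alpha^{-1})^*\mathscr{V}$.

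The heart of the proof is checking that $\Phi$ preserves the left inner product. The plan is to fix an atlas $\{h_j \colon U_j \times \mathbb{C} \to \mathscr{V}|_{U_j}\}$ of $\mathscr{V}$ with partition of unity $\{\gamma_j\}$ subordinate to $\{U_j\}$, and use the induced atlas $\{h_j' \colon \alpha(U_j) \times \mathbb{C} \to (\alpha^{-1})^*\mathscr{V}|_{\alpha(U_j)}\}$ defined by $h_j'(y, v) = (h_j(\alpha^{-1}(y), v), y)$ with partition of unity $\{\gamma_j \circ \alpha^{-1}\}$, which is subordinate to the cover $\{\alpha(U_j)\}$ since $\alpha$ is a homeomorphism. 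For $\xi, \eta \in \E$ with $\xi(\alpha^{-1}(y)) = h_j(\alpha^{-1}(y), r(\alpha^{-1}(y)))$ and $\eta(\alpha^{-1}(y)) = h_j(\alpha^{-1}(y), s(\alpha^{-1}(y)))$ on $\alpha^{-1}(y) \in U_j$, the definition of the left inner product on $\Gamma((\alpha^{-1})^*\mathscr{V})$ computed with the induced atlas yields
\[ {}_{\Gamma((\alpha^{-1})^*\mathscr{V})}\langle \Phi(\xi), \Phi(\eta)\rangle(y) = \sum_j \gamma_j(\alpha^{-1}(y))\, r(\alpha^{-1}(y))\, \overline{s(\alpha^{-1}(y))}, \]
and by \Cref{prop.LeftIP} together with \eqref{innerproduct}, the same quantity equals ${}_\E\langle \xi, \eta\rangle(y) = \langle \eta, \xi\rangle_\E(\alpha^{-1}(y))$. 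Extending from sections supported in a single chart to general sections by sesquilinearity finishes the match.

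I do not expect any serious obstacle; the only care needed is bookkeeping between the left and right inner product conventions and checking that the atlas one writes down for $(\alpha^{-1})^*\mathscr{V}$ is genuinely compatible. The text's remark that the left inner product on $\Gamma(\mathscr{W})$ does not depend on the choice of atlas (analogous to the right-module case) frees us to use the pulled-back atlas constructed above.
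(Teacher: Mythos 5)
Your proposal is correct and follows essentially the same route as the paper: both use the pulled-back atlas $k_j(y,\lambda) = (h_j(\alpha^{-1}(y),\lambda), y)$, verify left $C(X)$-linearity via the identity $f\xi = \xi\, f\circ\alpha$, and match the left inner products using \Cref{prop.LeftIP}. The only cosmetic difference is that you define the isomorphism globally by precomposition with $\alpha^{-1}$ and check everything chartwise, whereas the paper defines the inverse map on the generators $\eta_j \mapsto \xi_j$ and extends by linearity.
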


\begin{proof}
Let $\{h_j: U_j\times \mathbb{C}\to \mathscr{V}|_{U_j}\}$ be an atlas for $\mathscr{V}$, and $\gamma_1, \dots, \gamma_n \in C(X)$ a partition of unity subordinate to $U_1, \dots, U_n$ so that $\xi_j$, $1\leq j\leq n$ defined by $\xi_j(x) =h_j(x, \gamma_j^{1/2}(x))$ generate $\E$. An atlas for  $(\alpha^{-1})^*\mathscr{V}$ is then given by $\{ k_j : \alpha(U_j) \times \mathbb{C} \to (\alpha^{-1})^*\mathscr{V}|_{\alpha(U_j)}\}$, where $k_j(y, \lambda) = (h_j(\alpha^{-1}(y), \lambda), y)$.  Let $\eta_j$, $1 \leq j \leq n$, be defined by $\eta_j(y) := k_j(y, \gamma_j^{1/2}\circ \alpha^{-1}(y)) =  (h_j(\alpha^{-1}(y), \gamma_j^{1/2} \circ \alpha^{-1}(y)), y) = (\xi_j\circ\alpha^{-1}(y), y)$ and note that they generate the left Hilbert $C(X)$-module $\Gamma((\alpha^{-1})^*\mathscr{V})$. 

Let $T(\eta_j) = \xi_j$ for $1 \leq j \leq n$, and $y\in X$. Note that 
\begin{align*}
    f \eta_j(y) &=  (f(y) h_j(\alpha^{-1}(y),  \gamma_j^{1/2} \circ \alpha^{-1}(y), y) \\
    &=  (h_j(\alpha^{-1}(y), f\circ \alpha(\alpha^{-1}(y)) \gamma_j^{1/2} \circ \alpha^{-1}(y)), y)  \\
    &= ((\xi_j f \circ \alpha) (\alpha^{-1}(y)), y) \\
    &= (f \xi_j(\alpha^{-1}(y)), y).
\end{align*} 
It follows that
\begin{align*}
   T(f \eta_j)  =  f \xi_j = f T(\eta_j), 
\end{align*} so this extends to a well-defined left $C(X)$-linear map $T : \Gamma((\alpha^{-1})^*(\mathscr{V})) \to \,_{C(X)}\E$. Furthermore,
\begin{align*}
   \,_\E\langle T \eta_j, T \eta_k \rangle(x) &=\langle T \eta_k, T \eta_j \rangle_\E \circ \alpha^{-1}(x)\\
 &= \sum_{l=1}^n \gamma_l\circ\alpha^{-1}(x)  \overline{h_l^{-1}(\xi_k(\alpha^{-1}(x)))} h_l^{-1}(\xi_j(\alpha^{-1}(x)))  \\
 &= \sum_{l=1}^n \gamma_l\circ\alpha^{-1}(x) k_l^{-1}(\eta_j(x)) \overline{k_l^{-1}(\eta_k(x))}\\
 &= \,_{\Gamma((\alpha^{-1})^{*}\mathscr{V})}\langle \eta_j, \eta_k \rangle(x),
\end{align*}
for every $x \in X$ and every $1 \leq j,k \leq n$. It follows that $T$ is a unitary transformation. Thus $T$ is an isormorphism of left Hilbert C(X)-modules, $\Gamma((\alpha^{-1})^*\mathscr{V}) \cong \,_{C(X)} \E$.
\end{proof}

If $A = C(X)$ and $\E = \Gamma(\mathscr{V}, \alpha)$ for a homeomorphism $\alpha : X \to X$ and vector bundle $\mathscr{V} = [V,p,X]$, then it is easy to see that $\E$ is a full $\mathrm{C}^*$-correspondence. Furthermore, we have the following characterisation of Hilbert $C(X)$-bimodules which are finitely generated projective as right Hilbert $C(X)$-modules. The result below for left and right full Hilbert $C(X)$-bimodules can also be found in \cite{AbadieExel1997}, see  the discussion in Section~\ref{sec.examples}. 

\begin{proposition} \label{prop.CharOfFull}
Let $\E$ be a non-zero Hilbert $C(X)$-bimodule which is finitely generated projective as a right Hilbert $C(X)$-module. Then there exist a compact metric space $Y \subset X$, line bundle $\mathscr{V}= [V,p,X]$ and  homeomorphisms $\alpha : X \to Y$, $\beta : Y \to X$ such that $\E_{C(X)} \cong \Gamma(\mathscr{V})$, $_{C(X)}\E \cong \Gamma(\beta^*\mathscr{V})$ and $f \xi = \xi f \circ \alpha$ for every $f \in C(X)$ and every $\xi \in \E$. If $\E$ is left full, then we may take $X = Y$ and $\E \cong \Gamma(\mathscr{V}, \alpha)$.
\end{proposition}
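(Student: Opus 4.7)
My plan proceeds in four stages: realise $\E$ as sections of a bundle via Serre--Swan; use the bimodule compatibility to show the bundle is a line bundle; describe the left action via a continuous map $\alpha$ on $X$; and then identify the image $Y$, the left module structure, and the left-full case.

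First, by the Serre--Swan theorem (\Cref{prop:Mod from VB}), since $\E$ is finitely generated projective as a right Hilbert $C(X)$-module, there is a vector bundle $\mathscr{V}=[V,p,X]$ with $\E_{C(X)}\cong\Gamma(\mathscr{V})$. The Hilbert bimodule structure yields a structure map $\varphi_\E\colon C(X)\to\mathcal{L}(\E)$ satisfying $\varphi_\E(\,_\E\langle\E,\E\rangle)=\mathcal{K}(\E)$, and since $\E$ is finitely generated projective, $\mathcal{K}(\E)=\mathcal{L}(\E)$. As a $*$-homomorphic image of the commutative algebra $C(X)$, $\mathcal{K}(\E)$ is commutative; because $\mathcal{K}(\Gamma(\mathscr{V}))$ has fibres $M_{n_x}(\mathbb{C})$ over $x\in X$, this forces $\mathscr{V}$ to be a line bundle. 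Consequently $\mathcal{L}(\E)\cong C(X)$ acts on $\Gamma(\mathscr{V})$ by pointwise multiplication.

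A short computation using $\,_\E\langle a\xi,\eta\rangle=a\,_\E\langle\xi,\eta\rangle$ (expanding $\,_\E\langle\xi-\varphi_\E(1)\xi,\xi-\varphi_\E(1)\xi\rangle=0$) shows $\varphi_\E(1)=\id_\E$, so $\varphi_\E\colon C(X)\to C(X)$ is a unital $*$-homomorphism. Gelfand duality then produces a continuous map $\alpha\colon X\to X$ with $\varphi_\E(f)(x)=f(\alpha(x))$, giving the relation $f\xi=\xi\, f\circ\alpha$. Set $Y:=\alpha(X)$ and write $\,_\E\langle\E,\E\rangle=C_0(U)$ for some open $U\subseteq X$. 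The identification $\,_\E\langle\xi,\eta\rangle=\varphi_\E^{-1}(\theta_{\xi,\eta})$ shows $\,_\E\langle\E,\E\rangle=J_\E$ and that $\varphi_\E$ restricts to an isomorphism $C_0(U)\to\mathcal{K}(\E)=C(X)$. Analysing the pullback algebra $\alpha^{*}(C_0(U))=\{f\circ\alpha\colon f\in C_0(U)\}$: surjectivity onto $C(X)$ forces $\alpha$ to be injective (otherwise functions in the image are constant on fibres of $\alpha$) and $Y\subseteq U$ (otherwise $f\circ\alpha$ vanishes on $\alpha^{-1}(X\setminus U)\ne\emptyset$); injectivity, i.e.\ $\ker\varphi_\E\cap C_0(U)=C_0(U\setminus Y)=0$, forces $U\subseteq Y$. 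Hence $U=Y$, so $Y$ is clopen in $X$, and $\alpha\colon X\to Y$ is a continuous bijection of compact Hausdorff spaces, therefore a homeomorphism.

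For the left Hilbert $C(X)$-module structure, set $\beta:=\alpha^{-1}\colon Y\to X$ and consider the pullback line bundle $\beta^{*}\mathscr{V}$ over $Y$. Imitating the argument of \Cref{lem.lefty}, if $\xi_j$ are generators of $\Gamma(\mathscr{V})$ coming from a local trivialisation and subordinate partition of unity, the assignment $\eta_j(y):=(\xi_j(\beta(y)),y)$ produces generators of $\Gamma(\beta^{*}\mathscr{V})$, and one checks directly that the resulting map intertwines the $C(X)$-actions (with $C(X)$ acting on $\Gamma(\beta^{*}\mathscr{V})$ through restriction to $Y$) and the left inner products, yielding $\,_{C(X)}\E\cong\Gamma(\beta^{*}\mathscr{V})$. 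When $\E$ is left full, $\,_\E\langle\E,\E\rangle=C(X)$ gives $U=X$, whence $Y=X$ and $\alpha$ is a homeomorphism of $X$; the bimodule structure on $\E$ is then exactly that of $\Gamma(\mathscr{V},\alpha)$. The main obstacle is the last part of the third stage: extracting from the single requirement $\varphi_\E(J_\E)=\mathcal{K}(\E)$ both the injectivity of $\alpha$ and the equality $U=Y$, which in turn forces the non-obvious clopen conclusion. A na\"ive use of Gelfand duality would only yield a continuous map with $\alpha(X)\subseteq U$, missing the sharper homeomorphism-onto-image statement.
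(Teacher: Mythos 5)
Your argument is correct and arrives at all the assertions of the proposition, but the decisive middle step is handled by a genuinely different mechanism than in the paper. After the same Serre--Swan reduction, the paper proves $\mathscr{V}$ is a line bundle by exhibiting explicit non-commuting rank-one operators when some fibre has rank at least two, and then builds two $^*$-homomorphisms $\lambda(f)=\sum_j {}_\E\langle \xi_j f,\xi_j\rangle$ and $\rho(f)=\sum_j\langle\xi_j,f\xi_j\rangle_\E$ from a Parseval frame, computing $\rho\circ\lambda=\id$ and $\lambda\circ\rho(f)=f\sum_j {}_\E\langle\xi_j,\xi_j\rangle=f\chi_Y$; the maps $\alpha,\beta$ and the clopenness of $Y$ are read off from these identities. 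You instead observe that $\varphi_\E$ restricts to a $^*$-isomorphism from the ideal ${}_\E\langle\E,\E\rangle=C_0(U)$ onto $\mathcal{K}(\E)\cong C(X)$ and extract $U=\alpha(X)$ from injectivity and surjectivity of $f\mapsto f\circ\alpha$; this is cleaner and frame-free, and your polarisation argument for unitality of the left action fills the one hypothesis Gelfand duality needs. Two points deserve to be spelled out. First, the injectivity of $\varphi_\E$ on ${}_\E\langle\E,\E\rangle$, on which your identification ${}_\E\langle\xi,\eta\rangle=\varphi_\E^{-1}(\theta_{\xi,\eta})$ rests: it holds because if $a$ lies in that ideal and $a\xi=0$ for all $\xi$, then $a\,{}_\E\langle\xi,\eta\rangle={}_\E\langle a\xi,\eta\rangle=0$, whence $aa^*=0$. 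Second, the identification of the left inner product itself: the paper writes down the candidate $\langle\eta,\xi\rangle_\E\circ\beta$ (extended by zero off $Y$), verifies the compatibility identity as in Proposition~\ref{prop.LeftIP}, and invokes uniqueness of the norm on a finitely generated projective module before running the argument of Lemma~\ref{lem.lefty}; your ``one checks directly'' should include pinning down ${}_\E\langle\cdot,\cdot\rangle$ in this way, since otherwise the intertwining of left inner products cannot be verified.
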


\begin{proof}
Since $\E$ is finitely generated projective as a right Hilbert $C(X)$-module, there exists a vector bundle $\mathscr{V} = [V, p , X]$ such that $\E_{C(X)} \cong \Gamma(\mathscr{V})$ (Proposition \ref{prop:Mod from VB}). 
Let $\{h_j: U_j\times \mathbb{C}^{n_j}\to \mathscr{V}|_{U_j}\}$ be an atlas for $\mathscr{V}$, $\gamma_1, \dots, \gamma_n \in C(X)$ a partition of unity subordinate to $U_1, \dots, U_n$. 

We claim that $\mathscr{V}$ is a line bundle. Since $\E$ is a Hilbert $C(X)$-bimodule, we have $\theta_{\xi,\eta}(\zeta)=\xi\langle\eta,\zeta\rangle_\E=\,_\E\langle\xi,\eta\rangle\zeta$ for every $\zeta$ and thus $\mathcal K(\E)=\overline{\Span}\{\theta_{\xi,\eta}\mid \xi,\eta\in \E\}= \,_\E\langle\E,\E\rangle$, an ideal of $C(X)$, is commutative. But if $\mathscr{V}$ is not a line bundle, then we can find nonzero $\xi,\eta\in \E$ for which $\theta_{\xi,\eta}$ does not commute with $\theta_{\eta,\xi}$. In fact, if $n_j\geq 2$ for some $j$, then $\xi,\eta\in \E$ defined by $\xi(x):=\gamma_j(x)h_j(x,e_1)$ and $\eta(x):=\gamma_j(x)h_j(x,e_2)$ are mutually orthogonal, and with $\zeta=\xi+\eta$, we have  
$\theta_{\xi,\eta}\circ\theta_{\eta,\xi}(\zeta)=\xi\langle\eta,\eta\rangle_\E\langle\xi,\zeta\rangle_\E=\xi\langle\eta,\eta\rangle_\E\langle\xi,\xi\rangle_\E\neq\theta_{\eta,\xi}\circ\theta_{\xi,\eta}(\zeta)$. It follows that $\mathscr{V}$ is a line bundle, proving the claim.

Let $\xi_j(x) = h_j(x, \gamma_j^{1/2}(x))$, $1 \leq j \leq n$. Then  $\xi_1, \dots, \xi_n$ generate $\Gamma(\mathscr{V})$ as a right Hilbert $C(X)$-module and satisfy $\sum_{j=1}^n \langle \xi_j, \xi_j \rangle_\E = 1$. Define
\[ \lambda, \rho : C(X) \to C(X)\]
by 
\[ \lambda(f) = \sum_{j=1}^n \,_\E\langle \xi_j f, \xi_j \rangle, \]
and 
\[ \rho(f) = \sum_{j=1}^n \langle \xi_j, f \xi_j \rangle_\E.\]
Then one checks that $\lambda$ and $\rho$ are $^*$-homomorphisms. We have
\begin{align*}
    \rho(\lambda(f)) &= \sum_{j=1}^n \langle \xi_j, \lambda(f) \xi_j \rangle_\E = \sum_{j=1}^n \langle \xi_j, (\sum_{i=1}^n \,_\E \langle \xi_i f, \xi_i \rangle) \xi_j \rangle_\E \\
    &= \sum_{j=1}^n \langle \xi_j, \sum_{i=1}^n \xi_i f\langle \xi_i, \xi_j \rangle_\E \rangle_\E = \sum_{j=1}^n \langle \xi_j, \sum_{i=1}^n \xi_i \langle \xi_i, \xi_j \rangle_\E f \rangle_\E \\
    &= \sum_{j=1}^n \langle \xi_j, \sum_{i=1}^n \xi_i \langle \xi_i, \xi_j \rangle_\E  \rangle_\E f = \sum_{j=1}^n \langle \xi_j,  \xi_j  \rangle_\E f = f,
\end{align*}
and 
\begin{align*}
    \lambda(\rho(f)) &= \sum_{j=1}^n \,_\E \langle \xi_j \rho(f), \xi_j \rangle =  \sum_{j=1}^n \,_\E \langle \xi_j \sum_{i=1}^n \langle \xi_i, f \xi_i \rangle_\E, \xi_j \rangle \\
    &=\sum_{j=1}^n \,_\E \langle \sum_{i=1}^n \,_\E \langle \xi_j, \xi_i \rangle f \xi_i, \xi_j \rangle = \sum_{j=1}^n f \,_\E \langle \sum_{i=1}^n  \,_\E \langle \xi_j, \xi_i \rangle \xi_i, \xi_j \rangle \\
    &= f \sum_{j=1}^n \,_\E \langle \sum_{i=1}^n \xi_j \langle \xi_i, \xi_i \rangle_\E, \xi_j \rangle = f \sum_{j=1}^n \,_\E \langle \xi_j, \xi_j \rangle.
\end{align*}
Note that $\rho(\sum_{j=1}^n \,_\E \langle \xi_j, \xi_j \rangle) = \rho(\lambda(1_{C(X)})) = 1_{C(X)}$. Also, since $\lambda(1_{C(X)}) = \sum_{j=1}^n \,_\E \langle \xi_j, \xi_j \rangle$ is a projection in $C(X)$, we have that $\sum_{j=1}^n \,_\E \langle \xi_j, \xi_j \rangle = \chi_Y$ for some clopen subset $Y \subset X$. Thus there are continuous maps $\alpha:X \to Y$ and $\beta: Y\to X $ such that $\lambda(f)(x)=f\circ\beta(x)$ for every $x\in Y$ and $\rho(f)(x) = f \circ \alpha(x)$ for every $x \in X$. Moreover, we have $\beta \circ \alpha = \id_X$ and $\alpha \circ \beta = \id_Y$. Thus, by Gelfand duality, $X \cong Y \subset X$. Observe that $_{C(Y)}\E$,  where $_{C(Y)}\E$ denotes $_{C(X)}\E$ restricted to $Y$, is full and hence finitely generated projective as a left Hilbert $C(Y)$-module. 

Now, for every $\xi \in \E$ and every $f \in C(X)$, we have 
\begin{align*}
    \xi \rho(f) &= \xi \sum_{j=1}^n \langle \xi_j, f \xi_j \rangle_\E = \sum_{j=1}^n  \,_\E\langle \xi, \xi_j \rangle f \xi_j=  f \sum_{j=1}^n \,_\E\langle \xi, \xi_j \rangle \xi_j = f \sum_{j=1}^n \xi \langle \xi_j, \xi_j \rangle_\E = f \xi.
\end{align*}
Thus the left action is given by $f \xi = \xi f \circ \alpha$.

Define a left $C(X)$-valued sesquilinear form on $_{C(X)}\E$ by
\[ \,_\E \langle\langle \xi, \eta \rangle\rangle (x) :=  \left \{ \begin{array}{ll} \langle \eta, \xi \rangle_\E \circ \beta(x) & x \in Y,\\
0 & x \in X \setminus Y.\end{array} \right.\]

Since $\xi \in \,_{C(X)}\E$ satisfies $\xi = \sum_{j=1}^n \,_{\E} \langle \xi_j, \xi_j \rangle \xi = \chi_Y \xi$, we have that  $ _{\E}\langle\langle \cdot , \cdot  \rangle\rangle$ is positive definite. It is easy to check that $\,_{C(X)}\E$  is complete with respect to the induced norm, and hence this inner product makes $\,_{C(X)}\E$ into a left Hilbert $C(X)$-module. Moreover, $(_{C(Y)}\E, \,_\E\langle \langle \cdot, \cdot \rangle \rangle)$, where $(_{C(Y)}\E, \,_\E\langle \langle \cdot, \cdot \rangle \rangle)$ denotes $(_{C(X)}\E, \,_\E\langle \langle \cdot, \cdot \rangle \rangle)$ restricted to $Y$, is finitely generated projective as a left Hilbert $C(Y)$-module. As in the proof of \Cref{prop.LeftIP} we have
\[  \,_{\E}\langle \langle \xi, \eta \rangle \rangle \zeta = \xi \langle \eta, \zeta \rangle_\E, \]
for every $\xi, \eta, \zeta \in \E$. Since the norm of a finitely generated projective Hilbert $C(X)$-module is unique up to unitary isomorphism, we conclude that $(_{C(X)}\E, \,_\E\langle \cdot, \cdot \rangle) \cong (_{C(X)}\E,  \,_{\E}\langle \langle \cdot, \cdot \rangle \rangle)$. A proof analogous to that of \Cref{lem.lefty} tell us that there is an isomorphism of left Hilbert $C(Y)$-modules $(_{C(Y)}\E, \,_\E\langle \langle \cdot, \cdot \rangle \rangle) \cong \Gamma(\beta^*\mathscr{V})$. This extends to an isomorphism of (not necessarily full) left Hilbert $C(X)$-modules by putting
\[ _{C(X)}\langle \xi, \eta \rangle (x) := \left\{ \begin{array}{ll} _{C(Y)}\langle \xi, \eta \rangle(x) &  x \in Y,  \\
0 & x \in X \setminus Y,\end{array}\right.
\] for every $\xi, \eta \in \Gamma(\beta^*\mathscr{V})$.

Finally, if $_{C(X)} \E$ is left full, then $X = Y$ and so by  \Cref{lem.lefty} we have $\E \cong \Gamma(\mathscr{V}, \alpha)$ as Hilbert $C(X)$-bimodules. 
\end{proof}

Let $X$ be an infinite compact Hausdorff space. A homeomorphism $\alpha : X \to X$ is \emph{minimal} if, whenever $E \subset X$ is a closed subset such that $\alpha(E) \subset E$, then $E \in \{ \emptyset, X\}$, or equivalently, for every $x \in X$, the orbit of $x$, $\orb(x) := \{ \alpha^n(x) \mid n \in \mathbb{Z}\}$ is dense in $X$. A \emph{periodic point} for a homeomorphism $\alpha : X\to X$ is a point $x$ such that $\alpha^n(x) = x$ for some $n \in \mathbb{Z} \setminus \{0\}$. We say $\alpha$ is \emph{aperiodic} if there are no periodic points in $X$. Clearly a minimal homeomorphism on an infinite compact Hausdorff space is aperiodic, but aperiodicity does not imply minimality.

The notions of minimality and aperiodicity have been generalised to the setting of $\mathrm{C}^*$-correspondences.

\begin{definition} Let $A$ be a $\mathrm{C}^*$-algebra and  $\E$ a $\mathrm{C}^*$-correspondence over $A$ with structure map $\varphi_\E$.
\begin{enumerate}
    \item We say that $\E$ is \emph{periodic} if there exists $n \in \mathbb{Z}_{> 0}$ and a unitary transformation $U :  \mathcal{E}^{\otimes n} \to A$. Otherwise it is \emph{nonperiodic}.
    \item We say that $\mathcal{E}$ is \emph{minimal} if, whenever  $J \subset A $ is a (closed, two-sided) ideal satisfying $\langle \mathcal{E}, \varphi_\E(J) \mathcal{E} \rangle_\E \subset J$, then $J \in \{ 0, A\}$.
\end{enumerate}
\end{definition}

\begin{proposition}\label{minimalnonperiodic}
Let $X$ be an infinite compact metric space and $\alpha\colon X\to X$ a homeomorphism. Let $\mathscr{V} = [V, p, X]$ be a vector bundle. Then $\Gamma (\mathscr{V}, \alpha)$ is minimal if and only if $\alpha$ is minimal.
\end{proposition}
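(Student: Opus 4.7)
The plan is to exploit the Gelfand correspondence between closed two-sided ideals $J \subset C(X)$ and closed subsets $F \subset X$ via $J = C_0(X \setminus F) = \{f \in C(X) : f|_F = 0\}$; under this bijection, $J = 0$ corresponds to $F = X$ and $J = C(X)$ to $F = \emptyset$. Minimality of $\E = \Gamma(\mathscr{V}, \alpha)$ will then translate on the nose to the statement that the only closed subsets $F \subset X$ with $\alpha(F) \subset F$ are $\emptyset$ and $X$, which is precisely the definition of minimality of $\alpha$.

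First I would compute $\langle \E, \varphi_\E(J) \E \rangle_\E$ explicitly. For $\xi, \eta \in \E$ and $f \in J$, the definition of the left action on $\Gamma(\mathscr{V}, \alpha)$ gives $\varphi_\E(f) \eta = \eta \cdot (f \circ \alpha)$, and so by $C(X)$-linearity of the right inner product in the second variable, $\langle \xi, \varphi_\E(f) \eta \rangle_\E = \langle \xi, \eta \rangle_\E \, (f \circ \alpha)$. Because $\E$ is full as a right Hilbert $C(X)$-module (the Parseval frame $\{\xi_{j,k}\}$ constructed earlier satisfies $\sum_{j,k} \langle \xi_{j,k}, \xi_{j,k} \rangle_\E = 1$), the closed linear span of $\{\langle \xi, \eta \rangle_\E : \xi, \eta \in \E\}$ is all of $C(X)$. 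Consequently $\langle \E, \varphi_\E(J) \E \rangle_\E$ equals the $C(X)$-ideal generated by $\{f \circ \alpha : f \in J\}$, and since $f \mapsto f \circ \alpha$ is a $*$-automorphism of $C(X)$ it already maps the ideal $J$ onto an ideal, namely $C_0(X \setminus \alpha^{-1}(F))$.

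The invariance condition $\langle \E, \varphi_\E(J) \E \rangle_\E \subset J$ therefore becomes $C_0(X \setminus \alpha^{-1}(F)) \subset C_0(X \setminus F)$, which is equivalent to $F \subset \alpha^{-1}(F)$, i.e., $\alpha(F) \subset F$. Hence $\E$ admits no proper non-zero invariant ideal in the sense of the definition if and only if $\alpha$ admits no proper non-empty closed forward-invariant subset, yielding both directions of the equivalence at once. The ``main obstacle'' is essentially bookkeeping around the formula for the left action and the use of right-fullness of $\E$; notably, nothing about the vector-bundle structure beyond right-fullness of $\Gamma(\mathscr{V})$ enters, which is consistent with the statement holding for $\mathscr{V}$ of any rank.
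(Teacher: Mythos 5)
Your proposal is correct and follows essentially the same route as the paper: both reduce to the Gelfand correspondence between closed ideals of $C(X)$ and closed subsets of $X$ and rest on the identity $\langle \xi, \varphi_\E(f)\eta\rangle_\E = \langle \xi,\eta\rangle_\E\,(f\circ\alpha)$ together with right-fullness of $\Gamma(\mathscr{V})$, the only difference being that the paper runs the two implications separately (evaluating pointwise with sections satisfying $\langle\xi,\eta\rangle_\E(x)=1$) where you identify $\langle\E,\varphi_\E(J)\E\rangle_\E = C_0(X\setminus\alpha^{-1}(F))$ in one stroke. One cosmetic slip: for a bundle with fibres of rank greater than one the Parseval frame satisfies $\sum_{j,k}\theta_{\xi_{j,k},\xi_{j,k}}=\mathrm{id}$ rather than $\sum_{j,k}\langle \xi_{j,k},\xi_{j,k}\rangle_\E = 1$, but since that sum is pointwise bounded below by $1$ fullness still follows and your argument is unaffected.
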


\begin{proof} Let $E$ be a closed subset of $X $ and  set \[J_E=\{f\in C(X)\, |\, f|_{E}=0\}\triangleleft C(X).\] 
Then $E = \{ x \in X \mid f(x) = 0 \text{ for every } f \in J_E\}$. 

Suppose that $\Gamma(\mathscr{V}, \alpha)$ is minimal. Let $E \subset X$ be a non-empty closed subset such that $\alpha(E) \subset E$. Then for any $f \in J_E$ we have $f\circ \alpha \in J_E$. Since
\[ \langle \xi, f \, \eta \rangle_{\Gamma(\mathscr{V}, \alpha)} = \langle \xi, \eta \, (f \circ \alpha) \rangle_{\Gamma(\mathscr{V}, \alpha)} =  \langle \xi, \eta \rangle_{\Gamma(\mathscr{V}, \alpha)} \, f \circ \alpha,\]
for any $f \in C(X)$ and any $\xi, \eta \in \Gamma(\mathscr{V}, \alpha)$, if $f \in J_E$ we have 
\[  \langle \xi, f \, \eta \rangle_{\Gamma(\mathscr{V}, \alpha)}(x) =  \langle \xi, \eta \rangle_{\Gamma(\mathscr{V}, \alpha)} f \circ \alpha (x)= 0,\]
 for every $x \in E$. Thus $\langle \xi, \varphi_\E( J_E) \eta \rangle_{\Gamma(\mathscr{V}, \alpha)} \subset J_E$. Since $E$ is non-empty, by the minimality of $\Gamma(\mathscr{V}, \alpha)$ we must have $J_E = \{0\}$ and hence $E = X$, showing that $\alpha$ is minimal. 

Conversely, suppose that $\alpha$ is minimal and that $J \subset C(X)$ is a non-zero ideal such that $\langle \xi, \varphi_\E(J) \eta \rangle_{\Gamma(\mathscr{V}, \alpha)} \subset J$ for every $\xi, \eta \in \Gamma(\mathscr{V}, \alpha)$. Since $J$ is non-zero, there exists a proper closed subset $E \subset X$ such that $J = J_E$. For any $x \in E$, there exist $\xi, \eta \in \Gamma(\mathscr{V}, \alpha)$ such that $\langle \xi, \eta \rangle_{\Gamma(\mathscr{V}, \alpha)}(x) =1$. Then 
\[ 0 =  \langle \xi, f \cdot \eta \rangle_{\Gamma(\mathscr{V}, \alpha)} (x) =  \langle \xi , \eta   \rangle_{\Gamma(\mathscr{V}, \alpha)}(x) f(\alpha(x) )= f (\alpha(x)),\]
for every $f \in J_E$.  Hence $\alpha(x) \in E$ for every $x \in E$ and so $E \in \{\emptyset, X\}$. By assumption, $J_E$ is non-zero, so we must have $E = \emptyset$ and hence $J_E = C(X)$, showing that $\Gamma(\mathscr{V}, \alpha)$ is minimal.
\end{proof}

\begin{proposition}\label{thm.aperiodic}
Let $X$ be an infinite compact metric space and $\alpha\colon X\to X$ a homeomorphism. Let $\mathscr{V} = [V, p, X]$ be a vector bundle. If $\alpha$ is aperiodic then $\Gamma(\mathscr{V}, \alpha)$ is nonperiodic.
\end{proposition}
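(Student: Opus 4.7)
The plan is to proceed by contrapositive: assume $\Gamma(\mathscr{V},\alpha)$ is periodic, and show that $\alpha$ must have a periodic point (and, in fact, every point of $X$ must be periodic). Write $\E=\Gamma(\mathscr{V},\alpha)$, and suppose that some unitary isomorphism of $\mathrm{C}^*$-correspondences $U\colon \E^{\otimes n}\to C(X)$ exists for some $n\ge 1$.

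The first step is to compute the left $C(X)$-action on $\E^{\otimes n}$. Using the balanced tensor product relation $\xi_i a\otimes \xi_{i+1}=\xi_i\otimes a\xi_{i+1}$ together with the defining identity $a\xi_i=\xi_i(a\circ\alpha)$ for the left action on $\E$, one slides the scalar $f\in C(X)$ from the leftmost tensor factor to the rightmost one, picking up one composition with $\alpha$ per slot, and obtains
\[
f\cdot(\xi_1\otimes\cdots\otimes\xi_n) \;=\; (\xi_1\otimes\cdots\otimes\xi_n)\cdot(f\circ\alpha^n).
\]
This is the key structural input: the left action on $\E^{\otimes n}$ agrees with the right action up to precomposition by $\alpha^n$.

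The second step is to exploit the bimodule compatibilities forced on $U$ by being a correspondence isomorphism. Since $C(X)$ as a correspondence over itself has coinciding left and right actions (both given by multiplication), for every $\zeta\in\E^{\otimes n}$ and every $f\in C(X)$ we deduce
\[
fU(\zeta) \;=\; U(f\cdot \zeta) \;=\; U\bigl(\zeta\cdot (f\circ\alpha^n)\bigr) \;=\; U(\zeta)\cdot (f\circ\alpha^n).
\]
Evaluating at a point $x\in X$ gives $f(x)U(\zeta)(x)=U(\zeta)(x)f(\alpha^n(x))$. Whenever $U(\zeta)(x)\ne 0$, this yields $f(x)=f(\alpha^n(x))$ for every $f\in C(X)$, so by Urysohn's lemma $\alpha^n(x)=x$.

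Finally, I would use surjectivity of $U$: given any $x\in X$, pick $g\in C(X)$ with $g(x)\ne 0$, and set $\zeta:=U^{-1}(g)$, so that $U(\zeta)(x)=g(x)\ne 0$. The previous step then shows $\alpha^n(x)=x$, i.e.\ every point of $X$ is periodic for $\alpha$. Since $X$ is nonempty, this contradicts aperiodicity of $\alpha$ and completes the proof. The only delicate point is ensuring that the definition of a unitary transformation $U\colon \E^{\otimes n}\to C(X)$ is read as an isomorphism of $\mathrm{C}^*$-correspondences (equivalently, of $C(X)$-bimodules in the line-bundle case, by Proposition~\ref{prop.LeftIP}); once both actions are preserved, the asymmetry between the $\alpha^n$-twisted left action on $\E^{\otimes n}$ and the untwisted left action on $C(X)$ is exactly what forces $\alpha^n=\id_X$.
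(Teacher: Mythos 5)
Your proposal is correct and follows essentially the same route as the paper: the same computation sliding $f$ through the $n$ tensor factors to get $f\cdot\zeta = \zeta\cdot(f\circ\alpha^n)$, followed by the same use of the intertwining property of $U$ and a Urysohn separation argument. The only cosmetic difference is that the paper applies surjectivity once to pick $\eta$ with $U(\eta)=1_{C(X)}$, obtaining $f=f\circ\alpha^n$ globally in one step, whereas you argue pointwise on the nonvanishing locus of $U(\zeta)$ and then vary $\zeta$; both are equally valid.
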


\begin{proof}
Suppose that $\alpha$ is aperiodic and that there exists an $n \in \mathbb{Z}_{> 0}$ and a unitary $C(X)$-module map $U : \Gamma(\mathscr{V}, \alpha)^{\otimes n} \to C(X)$. 
Note that for every $f\in C(X)$ and every simple tensor $\xi_1\otimes\cdots\otimes\xi_n$ in $\Gamma(\mathscr{V},\alpha)^{\otimes n}$, we have \begin{align*}
fU(\xi_1\otimes\cdots\otimes\xi_n)
& = U(f\cdot\xi_1\otimes\cdots\otimes\xi_n)\\ 
& =U(\xi_1 (f\circ \alpha)\otimes\cdots\otimes \xi_n)
= \cdots = U(\xi_1\otimes\cdots\otimes \xi_n)(f\circ\alpha^n).
\end{align*} 
This implies that for  $\eta\in \Gamma(\mathscr{V},\alpha)^{\otimes n}$ with $U(\eta)=1_{C(X)}$, 
\begin{equation*}
f=f 1_{C(X)}=fU(\eta)=U(\eta)(f\circ\alpha^n)=(f\circ\alpha^n)
\end{equation*}
for all $f\in C(X)$. Let $x \in X$. Since $\alpha$ has no periodic points and $n \neq 0$, we have $\alpha^n(x) \neq x$ and there exist open subsets $U, V \subset X$ with $x \in U$, $\alpha^n(x) \in V$ satisfying $U \cap V = \emptyset$. Choose $f \in C(X)$ satisfying $f|_U =1$ and $f|_V = 0$. Then $1 = f(x) = f (\alpha^n(x)) = 0$, a contradiction.  Thus $\Gamma(\mathscr{V}, \alpha)$ is nonperiodic.
\end{proof}

From \Cref{minimalnonperiodic} and \Cref{thm.aperiodic} we obtain immediately the following result.
\begin{corollary}\label{19-10-18-3}
$\mathcal{O}(\Gamma(\mathscr{V}, \alpha))$ is simple if and only if $\alpha$ is minimal.
\end{corollary}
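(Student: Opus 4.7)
The plan is to deduce this from the two propositions just proved together with the standard simplicity criterion for Cuntz--Pimsner algebras due to Katsura, which states that for a $\mathrm{C}^*$-correspondence $\E$ over a $\mathrm{C}^*$-algebra $A$, the Cuntz--Pimsner algebra $\mathcal{O}(\E)$ is simple if and only if $\E$ is both minimal and nonperiodic (in Katsura's terminology: $A$ contains no nontrivial $\E$-invariant ideals and $\E$ is aperiodic). This result applies directly to our situation since $\Gamma(\mathscr{V},\alpha)$ is a $\mathrm{C}^*$-correspondence over the unital $\mathrm{C}^*$-algebra $C(X)$.

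For the ``only if'' direction, suppose $\mathcal{O}(\Gamma(\mathscr{V},\alpha))$ is simple. Then by Katsura's criterion, $\Gamma(\mathscr{V},\alpha)$ is minimal, and so by \Cref{minimalnonperiodic} the homeomorphism $\alpha$ is minimal.

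For the ``if'' direction, suppose $\alpha$ is minimal. Since $X$ is an infinite compact metric space, a minimal homeomorphism is automatically aperiodic: otherwise the finite orbit of any periodic point would be a nonempty proper closed $\alpha$-invariant subset of $X$. Thus $\alpha$ is aperiodic, and \Cref{thm.aperiodic} gives that $\Gamma(\mathscr{V},\alpha)$ is nonperiodic. Moreover, \Cref{minimalnonperiodic} gives that $\Gamma(\mathscr{V},\alpha)$ is minimal. Applying Katsura's simplicity criterion once more, we conclude that $\mathcal{O}(\Gamma(\mathscr{V},\alpha))$ is simple.

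There is no real obstacle here, as the substantive work has already been carried out in \Cref{minimalnonperiodic} and \Cref{thm.aperiodic}; the only step requiring care is a precise citation of Katsura's simplicity theorem in the form appropriate for nondegenerate $\mathrm{C}^*$-correspondences with injective left action, both of which are satisfied for $\Gamma(\mathscr{V},\alpha)$ since the structure map $\varphi(f)(\xi)=\xi(f\circ\alpha)$ is injective (as $\alpha$ is a homeomorphism and $\Gamma(\mathscr{V})$ is a full right Hilbert $C(X)$-module).
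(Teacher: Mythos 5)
Your proof is correct and follows essentially the same route as the paper: combine \Cref{minimalnonperiodic} and \Cref{thm.aperiodic} with the simplicity criterion ``$\mathcal{O}(\E)$ is simple iff $\E$ is minimal and nonperiodic'' for a full correspondence over a unital $\mathrm{C}^*$-algebra. The only discrepancy is the attribution: the criterion in this form is Schweizer's \cite[Theorem 3.9]{Schweizer2001} (which is what the paper cites, noting that $\Gamma(\mathscr{V},\alpha)$ is full), not Katsura's.
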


\begin{proof}
Since $\Gamma(\mathscr{V}, \alpha)$ is full, minimality and nonperiodicity of $\Gamma(\mathscr{V}, \alpha)$ is equivalent to simplicity of $\mathcal{O}(\Gamma(\mathscr{V}, \alpha))$ \cite[Theorem 3.9]{Schweizer2001}.
\end{proof}

%%%%%%%%%%%%%%%%%%%%%%%%%%%%%%%%%%%%%%%%%%%%%%%%
%%%%%%%%%%%%%%%%%%%%%%%%%%%%%%%%%%%%%%%%%%%%%%%%
%%%%%%%%%%%%%%%%%%%%%%%%%%%%%%%%%%%%%%%%%%%%%%%%

\section{Cuntz--Pimsner algebras for \texorpdfstring{$\mathrm{C}^*$}{C*}-correspondences over \texorpdfstring{$C(X)$}{C(X)}}\label{sec.cp}

Let $\E$ be a $\mathrm{C}^*$-correspondence over a unital $\mathrm{C}^*$-algebra $A$. 
Let $a = f \eta_1 \cdots \eta_n \in \mathcal{O}(\E)$ where $f \in A$ and $\eta_j \in \{ \xi \mid \xi \in \E\} \cup \{\xi^* \mid \xi \in \E\}$, $1 \leq j \leq n$.  We say the \emph{length} of $a$ is $n$.  Since $\xi_j^* \xi_k = \langle \xi_j, \xi_k \rangle_\E \in A$, there are unique $k, l \in \mathbb{Z}_{\geq 0}$ such that $k+ l \leq n$ and $a = f \xi_1 \cdots \xi_k   \xi_{k+1}^* \dots \xi_{k+l}^*$ . It follows that $k+l$ is the unique \emph{minimal length} of $a$. We refer to   $\deg(a) = k-l$ as the \emph{degree} of $a$.  In particular we get,
\[ \mathcal{O}(\E) = \overline{\bigoplus_{n \in \mathbb{Z}} \overline{E_n}}, \]
where $E_n$ is the $A$-linear span of  $\{ a \in \mathcal{O}(\E) \mid \deg(a) = n \}$.

The Cuntz--Pimsner algebra $\mathcal{O}(\E)$ admits a gauge action of the circle $\sigma: \mathbb{T} \to \Aut(\mathcal{O}(\E))$, see \cite[Section 3]{Pimsner1997} and \cite[Section 5]{Katsura2004}. Note that this recovers the grading given above as, for every $n \in \mathbb{Z}$, the spectral subspace $\{a \in \mathcal{O}(\E) \mid \sigma_z(a) = z^n a \}$ is $E_n$. 

\begin{remark} \label{rem.DAofLB}Observe that since $\E$ is algebraically finitely generated, so too is $\E^{\otimes n}$,  for any $n \in \mathbb{Z}_{> 0}$. If $\E$ is a Hilbert bimodule, then $\xi_1 \xi_2^* = \,_\E \langle \xi_1, \xi_2\rangle \in A$,
so that if $a \in \mathcal{O}(\E)$ has degree $k >0$,  there exist  $\xi_{1}, \dots, \xi_{k} \in \E$, such that $a = \xi_{1} \cdots \xi_{k}$, while if $\deg(a)= k< 0$, there exist  $\xi_{1}, \dots, \xi_{k} \in \E$ such that $a = \xi_{1}^* \cdots \xi_{k}^*$.  It follows that for every $n \in \mathbb{Z}_{> 0}$, the subspaces $E_n \cong \E^{\otimes n}$ while  $E_{-n} \cong \widehat{\E}^{\otimes n}$ and the fixed point algebra, $\mathcal{O}(\E)^{\sigma} = E_0$ is isomorphic to $A$. Hence $E_n = \overline{E_n}$ for every $n \in \mathbb{Z}$. In particular this is the case for $A=C(X)$ and $\E = \Gamma(\mathscr{V}, \alpha)$ when $\mathscr{V}$ is a line bundle, since in that case $\E$ is a Hilbert $C(X)$-bimodule by \Cref{prop.LeftIP}. We will tacitly identify $\E^{\otimes n}$ with $E_n$ throughout the paper. 

If $\E$ is a $\mathrm{C}^*$-correspondence with structure map $\varphi_\E$ such that $\varphi_\E(A) \not \cong A$, we still have that $A \subset E_0$, but $E_0$ will be a noncommutative $\mathrm{C}^*$-algebra containing isometric copies of $\mathcal{K}(\E^{\otimes n})$ for every $n \geq 1$. In this case $E_0 \subsetneq \overline{E_0} = \mathcal{O}(\E)^{\sigma}$. Similarly, while a copy of $\E^{\otimes n}$ is contained in $E_n$, we do not have $\E^{\otimes n} \cong \overline{E_n}$. This is the case for $A = C(X)$ and $\Gamma(\mathscr{V}, \alpha)$ when  $\mathscr{V}$ has a fibre of rank greater than one.
\end{remark}

\begin{lemma} \label{lem.SwitchSides}
Let $X$ be an infinite compact metric space and $\mathscr{V} = [V,p,X]$ a line bundle. Let $\E := \Gamma(\mathscr{V}, \alpha)$ for a minimal homeomorphism $\alpha\colon X\to X$. Let $n \in \mathbb{Z}_{>0}$. Then for any $\xi, \eta \in \E^{\otimes n}$ we have
\[ \,_{\E^{\otimes n}}\langle \xi, \eta \rangle = \langle \eta, \xi \rangle_{\E^{\otimes n}} \circ \alpha^{-n}.\]
\end{lemma}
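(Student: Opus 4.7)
The plan is to recognise $\E^{\otimes n}$ itself as a Hilbert $C(X)$-bimodule of the same type, namely as $\Gamma(\mathscr{W}, \alpha^n)$ for some line bundle $\mathscr{W}$ over $X$, and then to apply \Cref{prop.LeftIP} with $\alpha^n$ in place of $\alpha$.

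First, since $\E = \Gamma(\mathscr{V}, \alpha)$ is a Hilbert $C(X)$-bimodule by \Cref{prop.LeftIP}, the internal tensor product $\E^{\otimes n}$ inherits a Hilbert $C(X)$-bimodule structure (see \Cref{subsec.TP}). Because $\E$ is finitely generated projective of rank one as a right Hilbert $C(X)$-module, so too is $\E^{\otimes n}$, and $\E^{\otimes n}$ is left full since $\E$ is (using $\,_\E\langle \E, \E\rangle = \langle \E, \E\rangle_\E \circ \alpha^{-1} = C(X)$). Moreover, the commutativity of $\,_{\E^{\otimes n}}\langle \E^{\otimes n}, \E^{\otimes n}\rangle \subset C(X)$, combined with the argument in the proof of \Cref{prop.CharOfFull}, forces the associated vector bundle to be a line bundle.

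Next, I would compute the left action of $C(X)$ on $\E^{\otimes n}$. On an elementary tensor $\xi_1 \otimes \cdots \otimes \xi_n$, iterating the identities $f \cdot \xi = \xi(f \circ \alpha)$ and $\xi a \otimes \eta = \xi \otimes a \cdot \eta$ yields
\[ f \cdot (\xi_1 \otimes \cdots \otimes \xi_n) = (\xi_1 \otimes \cdots \otimes \xi_n)(f \circ \alpha^n), \qquad f \in C(X), \]
so the structure map on $\E^{\otimes n}$ has the form $\varphi_n(f)\xi = \xi(f \circ \alpha^n)$. Combined with the previous paragraph, this produces a line bundle $\mathscr{W}$ over $X$ and an isomorphism $\E^{\otimes n} \cong \Gamma(\mathscr{W}, \alpha^n)$ as Hilbert $C(X)$-bimodules. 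Applying \Cref{prop.LeftIP} with $(\mathscr{W}, \alpha^n)$ in place of $(\mathscr{V}, \alpha)$ then gives
\[ \,_{\E^{\otimes n}}\langle \xi, \eta\rangle = \langle \eta, \xi\rangle_{\E^{\otimes n}} \circ (\alpha^n)^{-1} = \langle \eta, \xi\rangle_{\E^{\otimes n}} \circ \alpha^{-n}, \]
as required.

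The main obstacle is the calculation of the iterated left action, i.e.\ confirming that moving $f$ through all $n$ tensor factors yields composition with $\alpha^n$. This is a routine iteration of the bimodule relation but requires careful bookkeeping of the left versus right actions. An alternative route would be to induct on $n$: the base case $n = 1$ is exactly \Cref{prop.LeftIP}, and the inductive step follows from the explicit formulas for left and right inner products on internal tensor products collected in \Cref{subsec.TP}, together with the identity $\,_\E\langle \xi, \eta a\rangle = (a^* \circ \alpha^{-1})\,_\E\langle \xi, \eta\rangle$ which is an immediate consequence of \Cref{prop.LeftIP}.
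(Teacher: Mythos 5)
Your argument is correct, but your primary route is genuinely different from the paper's: the paper proves the lemma by induction on $n$ --- exactly the alternative you sketch in your last paragraph --- with base case \Cref{prop.LeftIP} and an inductive step that is a direct computation using the tensor-product inner product formulas of \Cref{subsec.TP}. Your main argument instead identifies $\E^{\otimes n}$ structurally as $\Gamma(\mathscr{W}, \alpha^n)$ for some line bundle $\mathscr{W}$ and applies \Cref{prop.LeftIP} to that correspondence. This works: the computation $f\cdot(\xi_1\otimes\cdots\otimes\xi_n) = (\xi_1\otimes\cdots\otimes\xi_n)(f\circ\alpha^n)$ is the same one carried out in the proof of \Cref{thm.aperiodic}, and \Cref{prop.CharOfFull} (applied to the left-full, finitely generated projective bimodule $\E^{\otimes n}$) supplies the line bundle and the identification of the homeomorphism as $\alpha^n$. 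The one point you should make explicit is why the isomorphism transports the \emph{left} inner product: since the structure map $\varphi_n(f)\zeta = \zeta\,(f\circ\alpha^n)$ is injective, the left inner product of either bimodule is forced to equal $\varphi_n^{-1}(\theta_{\xi,\eta})$, and a unitary of right Hilbert modules intertwining the structure maps conjugates $\theta_{\xi,\eta}$ to $\theta_{U\xi,U\eta}$, so the formula of \Cref{prop.LeftIP} pulls back along the identification. The trade-off is that the paper's induction is elementary and self-contained, whereas your argument leans on the heavier \Cref{prop.CharOfFull} but isolates a structural fact of independent interest --- the tensor powers of $\Gamma(\mathscr{V},\alpha)$ remain in the same class, with the bundles tensoring and the homeomorphisms composing --- which is consistent with the Picard-group discussion in Section~\ref{sec.examples}.
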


\begin{proof} The statement is true for $n=1$ by \Cref{prop.LeftIP}. Assume that it holds for $n \geq 1$. Let $\xi = \xi_1 \otimes \cdots \otimes \xi_{n+1}$ and $\eta = \eta_1 \otimes \cdots \otimes \eta_{n+1}$. Then
\begin{align*}
  \lefteqn{_{\E^{\otimes n+1}}\langle \xi_1 \otimes \cdots \otimes \xi_{n+1}, \eta_1 \otimes \cdots \otimes \eta_{n+1} \rangle}\\
  &= \,_\E \langle \xi_1 \,_{\E^{\otimes n}} \langle \xi_2 \otimes \cdots \otimes \xi_{n+1}, \eta_2 \otimes \cdots \otimes \eta_{n+1} \rangle, \eta_1 \rangle \\
   &= \,_{\E^{\otimes n}} \langle \xi_2 \otimes \cdots \otimes \xi_{n+1}, \eta_2 \otimes \cdots \otimes \eta_{n+1} \rangle \circ \alpha^{-1} \,_\E \langle \xi_1, \eta_1 \rangle \\
   &=  \langle \eta_2 \otimes \cdots \otimes \eta_{n+1},\xi_2 \otimes \cdots \otimes \xi_{n+1} \rangle_{\E^{\otimes n}} \circ \alpha^{-n-1} \langle \eta_1, \xi_1 \rangle_\E \circ \alpha^{-1}\\
   &= \langle \eta_2 \otimes \cdots \otimes \eta_{n+1},\xi_2 \otimes \cdots \otimes \xi_{n+1} \langle \eta_1, \xi_1 \rangle_\E \circ \alpha^{n} \rangle_{\E^{\otimes n}} \circ \alpha^{-n-1} \\
      &= \langle \eta_2 \otimes \cdots \otimes \eta_{n+1},\langle \eta_1, \xi_1 \rangle_\E  \xi_2 \otimes \cdots \otimes \xi_{n+1}  \rangle_{\E^{\otimes n}} \circ \alpha^{-n-1} \\
      &= \langle \eta_1 \otimes \cdots \otimes \eta_{n+1}, \xi_1 \otimes \cdots \otimes \xi_{n+1} \rangle_{\E^{\otimes n+1}} \circ \alpha^{-n-1}.
\end{align*}
That $_{\E^{\otimes n+1}} \langle \xi, \eta \rangle = \langle \eta, \xi \rangle_{\E^{\otimes n+1}} \circ \alpha^{-n-1}$ for every $\xi, \eta \in \E^{\otimes n+1}$ now follows by linearity, and the statement follows by induction.
\end{proof}

\begin{remark}\label{conditionalexpectation}
The gauge action gives us an associated conditional expectation onto the fixed point $\mathrm{C}^*$-subalgebra given by
\begin{equation} \label{CondExp}
\Phi : \mathcal{O}(\E) \to  \mathcal{O}(\E)^{\sigma}, \qquad a \mapsto \int_{\mathbb{T}} \sigma_z(a) dz.
\end{equation}

It is straightforward to check that if $a \in\bigoplus_{n \in \mathbb{Z}} E_n$ then we have  $\int_{\mathbb{T}} \sigma_z(a^*a) dz = 0$ if and only if $a = 0$.  It follows that $\Phi$ is faithful.
\end{remark}

\begin{proposition} \label{prop:traces}
Let $\E = \Gamma(\mathscr{V}, \alpha)$ where $\mathscr{V}=[V,p,X]$ is a vector bundle and $\alpha : X \to X$ is a  homeomorphism. If $\mathscr{V}$ is not a line bundle, then $\mathcal{O}(\E)$ has no faithful tracial state. On the other hand,  $T(\mathcal{O}(\E)) \neq \emptyset$  if $\mathscr{V}$ is a line bundle. 
\end{proposition}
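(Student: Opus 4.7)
The proof splits into two independent parts; I treat them separately.

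\textbf{First part.} Assume $\mathscr{V}$ is not a line bundle, so there is some point where the fibre has dimension at least two. My plan is to exploit the covariance relation to show that a certain element of $C(X) \subset \mathcal{O}(\E)$ which strictly dominates $1$ on an open set must nevertheless have trace equal to $1$. Concretely, since $\E$ is finitely generated projective, $\mathcal{K}(\E) = \mathcal{L}(\E)$, and since $\alpha$ is a homeomorphism, $\varphi_\E$ is injective. Hence $J_\E = C(X)$ and in particular $1 \in J_\E$. Choosing an atlas $\{h_j : U_j \times \C^{n_j} \to \mathscr{V}|_{U_j}\}$ with partition of unity $\{\gamma_j\}$ and the associated Parseval frame $\{\xi_{j,k}\}$ from Section~\ref{sec.Corr}, the covariance condition applied to $1$ yields
\[ 1 = \psi_{\tau_u}(\varphi_\E(1)) = \psi_{\tau_u}(\id_\E) = \sum_{j,k} \xi_{j,k} \xi_{j,k}^{*} \]
in $\mathcal{O}(\E)$. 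A direct computation using the Hermitian structure and the fact that the $s_{j,k}$ are orthonormal in $\C^{n_j}$ shows that $\langle \xi_{j,k}, \xi_{j,k}\rangle_\E = \gamma_j$ for each $k$, so
\[ f := \sum_{j,k} \langle \xi_{j,k}, \xi_{j,k}\rangle_\E = \sum_j n_j \gamma_j \in C(X). \]
Since $\sum_j \gamma_j = 1$ and some $n_{j_0} \geq 2$, the function $f - 1$ is a nonzero positive element of $C(X)$. On the other hand, if $\tau$ is any tracial state on $\mathcal{O}(\E)$, then using $\xi_{j,k}^* \xi_{j,k} = \langle \xi_{j,k}, \xi_{j,k}\rangle_\E$ we compute
\[ \tau(f) = \sum_{j,k} \tau\bigl(\xi_{j,k}^* \xi_{j,k}\bigr) = \sum_{j,k} \tau\bigl(\xi_{j,k} \xi_{j,k}^*\bigr) = \tau(1) = 1. \]
Hence $\tau(f - 1) = 0$ on a nonzero positive element, so $\tau$ is not faithful.

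\textbf{Second part.} Assume $\mathscr{V}$ is a line bundle. By Proposition~\ref{prop.LeftIP}, $\E$ is a Hilbert $C(X)$-bimodule, so by Remark~\ref{rem.DAofLB} the fixed point algebra $\mathcal{O}(\E)^{\sigma}$ of the gauge action is $C(X)$, and the conditional expectation $\Phi : \mathcal{O}(\E) \to C(X)$ of Remark~\ref{conditionalexpectation} is available. The plan is to average $\Phi$ against an $\alpha$-invariant Borel probability measure. By the Krylov--Bogolyubov theorem, such a measure $\mu$ exists on $X$. Define
\[ \tau : \mathcal{O}(\E) \to \C, \qquad \tau(a) := \int_X \Phi(a)\, d\mu. \]
Positivity and $\tau(1) = 1$ are immediate from the fact that $\Phi$ is a state-valued conditional expectation and $\mu$ is a probability measure. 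For the trace property, it suffices, since $\Phi$ vanishes on $E_n$ for $n \neq 0$, to verify $\tau(ab) = \tau(ba)$ for $a \in E_m$, $b \in E_{-m}$, $m \geq 0$. Using that $\E$ is a Hilbert bimodule, we may write $a = \xi$ and $b = \eta^*$ with $\xi, \eta \in \E^{\otimes m}$; then applying Lemma~\ref{lem.SwitchSides},
\[ \xi \eta^* = {}_{\E^{\otimes m}}\langle \xi, \eta\rangle = \langle \eta, \xi\rangle_{\E^{\otimes m}} \circ \alpha^{-m}, \qquad \eta^* \xi = \langle \eta, \xi\rangle_{\E^{\otimes m}}. \]
By $\alpha$-invariance of $\mu$, integrating the first against $\mu$ gives the same value as integrating the second, proving $\tau(ab) = \tau(ba)$.

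\textbf{Main obstacle.} The first half is essentially an algebraic identity combined with the covariance relation, and is quick once one has the explicit frame from Section~\ref{sec.Corr}. The only subtle point in the second half is the bookkeeping that turns a trace identity at level $\E^{\otimes m}$ into an identity of scalars that $\alpha$-invariance can handle; this is precisely what Lemma~\ref{lem.SwitchSides} is designed to do, so the argument is clean once that lemma is in hand.
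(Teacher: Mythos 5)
Your proposal is correct and follows essentially the same route as the paper: the first half uses the Parseval frame identity $\sum_{j,k}\xi_{j,k}\xi_{j,k}^*=1$ together with the trace swap $\tau(\xi^*\xi)=\tau(\xi\xi^*)$ (you exhibit the nonzero positive element $f-1=\sum_j(n_j-1)\gamma_j$ in the kernel of $\tau$, where the paper instead derives the contradiction $1>1$ from faithfulness — same substance), and the second half constructs $\tau_\mu\circ\Phi$ from an $\alpha$-invariant measure and verifies traciality via Lemma~\ref{lem.SwitchSides} exactly as the paper does, differing only in checking $\tau(ab)=\tau(ba)$ on graded pieces rather than $\tau(a^*a)=\tau(aa^*)$.
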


\begin{proof}
Suppose first that $\mathscr{V}$ is not a line bundle.  Let $\{ h_i : U_i \times \mathbb{C}^{n_i} \to \mathscr{V}|_{U_i}\}_{i \in I}$ be a finite atlas of charts for $\mathscr{V}$, and let $\xi_{i, j}(x) :=  h_i(x, \gamma_i^{1/2}(x) e^{(i)}_j)$ where $e^{(i)}_1, \dots, e^{(i)}_{n_i}$ is the standard orthonormal basis for $\mathbb{C}^{n_i}$ and $\{\gamma_i\}_{i \in I}$ is a partition of unity subordinate to $\{U_i\}_i$. Then $1  = \sum_{i\in I} \langle \xi_{i,j}, \xi_{i, j} \rangle$ for every $j=1,\dots, n_i$. Since the $\xi_{i,j}$ generate the module and form a Parseval frame, we also have 
\[
\eta  = \sum_{i\in I} \sum_{j=1}^{n_i} \xi_{i,j} \langle \xi_{i,j}, \eta \rangle= \left (\sum_{i\in I} \sum_{j=1}^{n_i} \theta_{\xi_{i,j}, \xi_{i,j}} \right) \eta
\]
for every $\eta \in \E$. Thus $\sum_{i\in I} \sum_{j=1}^{n_i} \theta_{\xi_{i,j}, \xi_{i,j}}$ is the identity operator. Upon identifying the $\xi_{i,j}$  and $\theta_{\xi_{i,j}, \xi_{i,j}}$ with their images in $\mathcal{O}(\E)$ we see that, for any state $\tau$,
\[ 1 = \tau\left (\sum_{i\in I} \sum_{j=1}^{n_i} \xi_{i,j} \xi_{i,j}^*\right) = 
\sum_{i\in I} \sum_{j=1}^{n_i} \tau(\xi_{i,j} \xi_{i,j}^*).\]
If $\tau$ is faithful, then $\tau(\xi_{i,j}^*\xi_{i,j})$ is non-zero for every $1 \leq j \leq n_i$ and every  $i \in I$. If additionally $\tau$ is a tracial state, this implies
\[ 1 =  \sum_{i\in I} \sum_{j=1}^{n_i} \tau(\xi_{i,j} \xi_{i,j}^*) = \sum_{i\in I} \sum_{j=1}^{n_i} \tau(\xi_{i,j}^* \xi_{i,j}) > \sum_{i \in I}  \tau(\xi_{i,1}^* \xi_{i,1}) 
\geq 1.\]
Thus $\tau$ cannot be a faithful tracial state. 

Now let $\mathscr{V}$ be a line bundle. Let $\mu$ be an $\alpha$-invariant probability measure on $X$, and let
\[ \tau_\mu(f) = \int_X f d\mu,\]
which is evidently a state. We claim that $\tau_\mu \circ\Phi$ is tracial. It is enough to show that the tracial condition holds on the dense subalgebra $\bigoplus_{n \in \mathbb{Z}} E_n$. It follows from the observations in \Cref{rem.DAofLB}, that if $a \in \bigoplus_{n \in \mathbb{Z}} E_n$ there is $N \in \mathbb{Z}_{\geq 0}$ and $\xi_n \in E_n$, $-N \leq n \leq N$ such that $a = \sum_{n=-N}^N \xi_n$. (Note that $\xi_0 \in C(X)$.) Thus
\begin{align*}
    \Phi(a^*a) &= \Phi\left( \sum_{j,k = -N}^N \xi_j^* \xi_k \right)\\
    &= \sum_{j=-N}^N \xi_j^*\xi_j \\
    &= \sum_{j=-N}^N \xi_j \xi_j^* \circ \alpha^{-j}, \\
\end{align*} 
where the third equality follows from Lemma~\ref{lem.SwitchSides}.
Since $\mu$ is $\alpha$-invariant, this shows that $\tau_{\mu} \circ \Phi (a^*a) = \tau_{\mu} \circ \Phi (aa^*)$, which proves the claim. Since $X$ always has an $\alpha$-invariant Borel probability measure, it follows that $T(\mathcal{O}(\E)) \neq \emptyset$.  
\end{proof}

\begin{proposition} \label{prop.T(A)}
Let $X$ be an infinite compact metric space, $\mathscr{V} =[V,p, X]$ a line bundle, and $\alpha : X\to X$ an aperiodic homeomorphism. Let $\E := \Gamma(\mathscr{V}, \alpha)$. 

Then there are affine homeomorphisms
\[ T(\mathcal{O}(\E)) \cong T(C(X) \rtimes_\alpha \mathbb{Z}) \cong M^1(X, \alpha),\]
where $M^1(X, \alpha)$ denotes the space of $\alpha$-invariant Borel probability measures. 
\end{proposition}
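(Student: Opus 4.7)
The plan is to establish each homeomorphism by showing that both trace spaces are naturally in affine bijection with $M^1(X,\alpha)$, with the second one (for the crossed product) being the classical correspondence between traces on $C(X)\rtimes_\alpha\Z$ for a free action and $\alpha$-invariant Borel probability measures; this follows from the canonical conditional expectation $C(X)\rtimes_\alpha\Z\to C(X)$ together with aperiodicity of $\alpha$.

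For the first homeomorphism, I would define the map $T(\mathcal{O}(\E))\to M^1(X,\alpha)$ by restriction to $C(X)\subset E_0\subset \mathcal{O}(\E)$. First I would verify the restriction lands in $M^1(X,\alpha)$. Since $\mathscr{V}$ is a line bundle, $\E$ is a Hilbert $C(X)$-bimodule (\Cref{prop.LeftIP}), so for $\xi,\eta\in\E$ we may identify $\xi\eta^* = {}_\E\langle\xi,\eta\rangle\in C(X)$ and $\eta^*\xi = \langle\eta,\xi\rangle_\E\in C(X)$. The trace identity $\tau(\xi\eta^*)=\tau(\eta^*\xi)$ combined with \Cref{prop.LeftIP} gives
\[
\tau\bigl(\langle\eta,\xi\rangle_\E\circ\alpha^{-1}\bigr)=\tau\bigl(\langle\eta,\xi\rangle_\E\bigr).
\]
Since $\E$ is full, elements $\langle\eta,\xi\rangle_\E$ have dense linear span in $C(X)$, so $\tau|_{C(X)}\circ\alpha^{-1}=\tau|_{C(X)}$, producing a unique $\alpha$-invariant probability measure $\mu_\tau$. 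Surjectivity of this restriction map is immediate from \Cref{prop:traces}: for each $\mu\in M^1(X,\alpha)$ the tracial state $\tau_\mu\circ\Phi$ restricts to $\tau_\mu$.

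The main obstacle is injectivity, i.e.\ showing every trace $\tau$ equals $(\tau|_{C(X)})\circ\Phi$, equivalently $\tau(a)=0$ for every $a\in E_n$ with $n\neq 0$. By \Cref{rem.DAofLB} we may take $a\in\E^{\otimes n}$, and the bimodule relation (iterating the one for $\E$) yields $f\cdot a = a\cdot(f\circ\alpha^n)$ for all $f\in C(X)$. Combined with $\tau(fa)=\tau(af)$ this gives
\[
\tau\bigl(a\cdot(f\circ\alpha^n-f)\bigr)=0\quad\text{for all } f\in C(X).
\]
Hence the bounded linear functional $\phi_a\colon C(X)\to\C$, $\phi_a(g):=\tau(a\cdot g)$, is $\alpha^n$-invariant. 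To upgrade this to $\phi_a\equiv 0$, I would exploit the aperiodicity of $\alpha^n$: for any $x\in X$ choose a small open $U\ni x$ with $U\cap\alpha^{n}(U)=\emptyset$ and a continuous bump $e$ supported in $U$, so that $e(e\circ\alpha^n)\equiv 0$. Applying Cauchy--Schwarz for the trace to $ae$ together with $(ae)^*(ae)=e\cdot a^*a\cdot e\in C(X)$ and covering $X$ by such sets via a partition-of-unity argument, one forces $\tau(a\cdot g)=0$ for a weak-$*$ dense set of $g$, hence $\tau(a)=0$. The technical care lies in ensuring the covering produces a genuine vanishing rather than just an $\alpha^n$-invariance of $\phi_a$; this mimics the standard argument for $C(X)\rtimes_\alpha\Z$ once one keeps careful track of the Hilbert-bimodule left/right action.

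Finally, weak-$*$ continuity of both the restriction map and its inverse $\mu\mapsto\tau_\mu\circ\Phi$ is routine (restriction is continuous by definition, and $\mu\mapsto\tau_\mu\circ\Phi$ is continuous on the dense span of spectral subspaces by \eqref{CondExp}), as is linearity in the convex structure. Composing the two affine homeomorphisms gives $T(\mathcal{O}(\E))\cong T(C(X)\rtimes_\alpha\Z)$ and completes the proof.
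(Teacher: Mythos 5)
Your proposal is correct and follows the same overall strategy as the paper: show that every tracial state factors as $\tau=(\tau|_{C(X)})\circ\Phi$, identify $\tau|_{C(X)}$ with an $\alpha$-invariant measure via Riesz representation, and quote the classical identification for $C(X)\rtimes_\alpha\mathbb{Z}$. The one place where you genuinely diverge is the key step of killing the nonzero-degree components. The paper does this for all degrees $-N\le m\le N$, $m\neq 0$, at once by averaging over unimodular functions $s_1,\dots,s_n\in C(X)$ satisfying $\frac1n\sum_k s_k\,\overline{s_k}\circ\alpha^{-m}=0$ for $m\neq0$ (quoting \cite[Lemma 11.1.18]{GioKerPhi:CRM} or \cite[Lemma VIII.7.1]{Dav:C*-ex}), which yields $\frac1n\sum_k s_k a\overline{s_k}=\Phi(a)$ directly. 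You instead work degree by degree with bump functions whose supports are $\alpha^n$-disjoint; this avoids importing the averaging lemma and makes the role of aperiodicity of each power $\alpha^n$ more transparent, at the cost of a separate covering argument for each $n$. One small correction to your mechanism: Cauchy--Schwarz is not what closes the argument (it bounds $|\tau(ae)|^2$ by $\tau(aea^*)\tau(e)$, which is not obviously zero); what you actually want is cyclicity of the trace combined with the commutation relation, namely $\tau(ae)=\tau(e^{1/2}ae^{1/2})=\tau\bigl(a\,(e^{1/2}\circ\alpha^{n})\,e^{1/2}\bigr)=0$ whenever $\supp(e)\cap\alpha^{-n}(\supp(e))=\emptyset$, after which summing over a subordinate partition of unity gives $\tau(ag)=0$ for all $g\in C(X)$ and hence $\tau(a)=0$. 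With that substitution your argument is complete, and your explicit verification that $\tau|_{C(X)}$ is $\alpha$-invariant (via $\tau({}_\E\langle\xi,\eta\rangle)=\tau(\langle\eta,\xi\rangle_\E)$ and \Cref{prop.LeftIP}) is a step the paper leaves implicit.
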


\begin{proof}
By the proof of the previous proposition, every $\alpha$-invariant Borel probability measure gives rise to a tracial state on $\mathcal{O}(\E)$. Let us show that every tracial state arises in this way.

Let $\tau \in T(\mathcal{O}(\E))$. It is enough to show that $\tau = \tau|_{C(X)} \circ \Phi$, as the result then follows from the Riesz representation theorem. Let $a \in \bigoplus_{n \in \mathbb{Z}} E_n$. Then $a=  \sum_{j=-N}^N  \xi_{j}$ for some $\xi_{j} \in E_j$, $-N \leq j \leq N$, and $\xi_{0} \in C(X)$. Since $\alpha$ is aperiodic, there exist $n \in \Z_{n>0}$ and $s_1, s_2, \dots, s_n \in C(X)$ such that $|s_k(x)|=1$ every $x \in X$, $1\leq k \leq n$, and which satisfy
\[ \frac{1}{n} \sum_{k=1}^n s_k(x) \overline{s_k} \circ \alpha^{-m}(x) = 0,\]
for every $x \in X$,  and $m \in \{-N, -N+1, \dots, N\} \setminus \{0\}$  (see for example, \cite[Lemma 11.1.18]{GioKerPhi:CRM} or \cite[Lemma VIII.7.1]{Dav:C*-ex} for the case when $\alpha$ is minimal).
Thus 
\begin{align*}
  \frac{1}{n} \sum_{k=1}^n s_k a \overline{s_k} &= \frac{1}{n} \sum_{k=1}^n \left( s_k\left (\sum_{j=-N}^N  \xi_{j} \right) \overline{s_k}\right) \\
  &=\frac{1}{n} \sum_{j=-N}^N \sum_{k=1}^n s_k  \xi_{j} \overline{s_k} \\
  &=\frac{1}{n}  \sum_{j=-N}^N  \sum_{k=1}^n s_k\, \overline{s_k}  \circ \alpha^{-j}  \, \xi_j \, \\
  &= \frac{1}{n}   \sum_{k=1}^n \overline{s_k}s_k \xi_{0} = \xi_0 = \Phi(a).
\end{align*}
Thus for every tracial state $\tau \in T(\mathcal{O}(\E))$, we have 
\[ \tau(a) = \tau|_{C(X)}\circ \Phi(a).\]
Since $\bigoplus_{n \in \mathbb{Z}} E_n$ is dense, $\tau = \tau|_{C(X)} \circ \Phi(a)$.

The tracial state space of $C(X) \rtimes_\alpha \mathbb{Z}$ is homeomorphic to the space of $\alpha$-invariant Borel probability measures on $X$ (see for example \cite[Lemma 11.1.22]{GioKerPhi:CRM}). As all the maps considered are natural, the statement of the proposition now holds.
\end{proof}

\begin{corollary} \label{cor:stabfin}
Let $X$ be an infinite compact metric space and $\mathscr{V} =[V,p, X]$ a line bundle. Let $\E = \Gamma(\mathscr{V}, \alpha)$ where  $\alpha : X \to X$ is a minimal homeomorphism. Then $\mathcal{O}(\E)$ is stably finite.
\end{corollary}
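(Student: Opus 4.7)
My plan is to produce a faithful tracial state on $\mathcal{O}(\E)$ and then to invoke the standard fact that any unital \cstar-algebra admitting a faithful trace is stably finite.

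To construct such a trace, I would start from an $\alpha$-invariant Borel probability measure $\mu$ on $X$, which exists by Krylov--Bogolyubov (and whose existence has already been used in \Cref{prop:traces} and \Cref{prop.T(A)}). Because $\alpha$ is minimal, the support of $\mu$ is a nonempty closed $\alpha$-invariant subset of $X$, hence equal to all of $X$. Thus the state $\tau_\mu(f) := \int_X f\, d\mu$ on $C(X)$ is faithful.

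Next, I would compose with the conditional expectation $\Phi$ of \eqref{CondExp}, noting that since $\mathscr{V}$ is a line bundle, \Cref{rem.DAofLB} identifies the fixed point algebra $\mathcal{O}(\E)^{\sigma}$ with $C(X)$, so $\tau := \tau_\mu \circ \Phi$ is well-defined. By the computation in the proof of \Cref{prop:traces}, $\tau$ is a tracial state on $\mathcal{O}(\E)$. By \Cref{conditionalexpectation}, $\Phi$ is faithful; combined with faithfulness of $\tau_\mu$ on $C(X)$, this shows that $\tau$ is faithful: if $a \in \mathcal{O}(\E)$ is nonzero, then $a^*a$ is a nonzero positive element, so $\Phi(a^*a)$ is a nonzero positive element of $C(X)$, whence $\tau(a^*a) = \tau_\mu(\Phi(a^*a)) > 0$.

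Finally, a unital \cstar-algebra with a faithful tracial state is automatically stably finite: one extends $\tau$ to a faithful tracial state $\tau_n$ on each $M_n(\mathcal{O}(\E))$, and then any isometry $v \in M_n(\mathcal{O}(\E))$ satisfies $\tau_n(1-vv^*) = n - \tau_n(v^*v) = 0$, forcing $vv^*=1$ by faithfulness. I do not anticipate any real obstacle, since each ingredient (existence of an invariant measure with full support, faithfulness of $\Phi$, and traciality of $\tau_\mu \circ \Phi$) is already available from earlier in the paper.
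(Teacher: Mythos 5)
Your proof is correct, and it reaches the same conclusion by the same overall skeleton as the paper (produce a tracial state of the form $\tau_\mu\circ\Phi$ and deduce stable finiteness from faithfulness), but the mechanism you use to obtain faithfulness is genuinely different. The paper's proof is shorter and more indirect: it quotes \Cref{prop:traces} for the existence of \emph{some} tracial state and then observes that, since minimality forces $\mathcal{O}(\E)$ to be simple (\Cref{19-10-18-3}), any tracial state is automatically faithful, because $\{a : \tau(a^*a)=0\}$ is a closed two-sided ideal. You instead verify faithfulness by hand, using minimality only through the fact that an $\alpha$-invariant measure must have full support, combined with the faithfulness of the conditional expectation $\Phi$ recorded in \Cref{conditionalexpectation}. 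Your route is more self-contained --- it does not rely on Schweizer's simplicity criterion underlying \Cref{19-10-18-3} --- and it identifies exactly \emph{which} traces are faithful, whereas the paper's argument is a one-line deduction once simplicity is in hand but gives no information in non-simple situations. Both arguments are complete; the closing step (a unital $\mathrm{C}^*$-algebra with a faithful tracial state is stably finite, via the non-normalised extension $\tau\otimes\mathrm{Tr}$ to $M_n$) is standard and correctly executed in your write-up.
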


\begin{proof}
By ~\Cref{prop:traces}, $\mathcal{O}(\E)$ has a tracial state. Since minimality of $\alpha$ implies that $\mathcal{O}(\E)$ is simple, this tracial state must be faithful. It follows that $\mathcal{O}(\E)$ is stably finite.
\end{proof}

\section{Rokhlin dimension and classification when \texorpdfstring{$X$}{X} is finite dimensional} \label{sec.RD}

The Rokhlin dimension for a $\mathrm{C}^*$-correspondence was defined by Brown, Tikuisis and Zelenberg \cite{MR3845113}. It generalises the original definition of Rokhlin dimension due to Hirshberg, Winter, and Zacharias \cite{HirWinZac:RokDim}, which in turn is a generalisation of the Rokhlin property \cite{Kishi:RP}, a $\mathrm{C}^*$-algebraic version of the Rokhlin lemma in ergodic theory. The Rokhlin dimension is useful for giving estimates on the nuclear dimension of a $\mathrm{C}^*$-algebra. Nuclear dimension is a refinement of the completely positive approximation property, and when it is finite a $\mathrm{C}^*$-algebra is particularly well-behaved. We won't need the precise definition here, as we only need to know when the nuclear dimension is finite, which will follow from finiteness of the Rokhlin dimension. The interested reader can consult \cite{WinterZac:dimnuc} for the definition and properties of the nuclear dimension.

\begin{definition}[{\cite[Definition 5]{MR3845113}}]\label{Defn31}
	Let $A$ be a separable unital $\mathrm{C}^*$-algebra and let $\mathcal{E}$ be a countably generated $\mathrm{C}^*$-correspondence over $A$. We say that $\mathcal{E}$ has \emph{Rokhlin dimension at most $d$} if, for any $\epsilon>0$, any $p \in \mathbb{Z}_{>0}$, every finite subset $F \subset A$ and every finite subset $\mathcal{G} \subset \mathcal{E}$, there exist positive contractions 
	\[ \{f^{(l)}_k \}_{l = 0, \dots, d; k \in \mathbb{Z}/p\mathbb{Z}} \subset A\]
	satisfying 
	\begin{enumerate}
		\item $\| f_k^{(l)} f_{k'}^{(l)} \| < \epsilon $ when $k\neq k'$,
		\item $\| \sum_{k,l} f^{(l)}_k -1 \| < \epsilon$,
		\item $\| \xi f^{(l)}_k - f^{(l)}_{k+1} \xi \| < \epsilon$ for every $k, l$, every $\xi \in \mathcal{G}$,
		\item $\| [ f_k^{(l)}, a ] \| < \epsilon$ for every $k, l$ and $a \in F$.
	\end{enumerate}
\end{definition}

\begin{theorem} \label{thm.FinRokDim}
	Let $X$ be an infinite compact metric space with $\dim(X) < \infty$,  $\mathscr{V} = [V, p, X]$ a vector bundle, and $\alpha : X \to X$ an aperiodic homeomorphism. Then $\E = \Gamma(\mathscr{V}, \alpha)$ has finite Rokhlin dimension.
	\end{theorem}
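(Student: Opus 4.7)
The plan is to reduce the claim to the known finiteness of the Rokhlin dimension of the topological dynamical system $(X,\alpha)$. By a theorem of Hirshberg--Winter--Zacharias, since $\dim X < \infty$ and $\alpha$ is aperiodic (so the $\mathbb{Z}$-action on $X$ is free), $\dim_{\mathrm{Rok}}(X,\alpha)$ is finite, bounded by something on the order of $2\dim X$. Concretely, for every $\epsilon'>0$ and every $p \in \mathbb{Z}_{>0}$, there exist positive contractions $\{f_k^{(l)}\}_{l=0,\ldots,d;\, k\in\mathbb{Z}/p\mathbb{Z}} \subset C(X)$ satisfying (1) and (2) of \Cref{Defn31}, together with the approximate tower condition
\[
\|f_k^{(l)} - f_{k+1}^{(l)} \circ \alpha\| < \epsilon'
\]
for all $l$ and $k$ (possibly after reindexing the tower, using that $\alpha^{-1}$ is also aperiodic).

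I then translate these towers to the $\mathrm{C}^*$-correspondence $\E = \Gamma(\mathscr{V},\alpha)$. Condition (4) of \Cref{Defn31} is immediate from commutativity of $A = C(X)$: $[f_k^{(l)}, a] = 0$ for every $a \in F$. For condition (3), I exploit the explicit form of the left action, $\varphi(g)\xi = \xi(g\circ\alpha)$, to compute
\[
\|\xi f_k^{(l)} - f_{k+1}^{(l)} \xi\| \;=\; \|\xi(f_k^{(l)} - f_{k+1}^{(l)}\circ\alpha)\| \;\leq\; \|\xi\|\cdot\|f_k^{(l)} - f_{k+1}^{(l)}\circ\alpha\|,
\]
where the last inequality uses $\|\xi g\|^2 = \|g^*\langle\xi,\xi\rangle_\E g\| \leq \|g\|^2\|\xi\|^2$. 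Thus choosing $\epsilon' < \epsilon / (1 + \max_{\xi\in\mathcal{G}}\|\xi\|)$ forces condition (3) to hold. Since (1), (2) carry over verbatim from the dynamical tower, this exhibits the same bound on the Rokhlin dimension of $\E$ as on that of $(X,\alpha)$, establishing finiteness.

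The only substantive point is the careful bookkeeping of the direction in which $\alpha$ appears; the heart of the argument is just the dynamical Rokhlin lemma in the form supplied by Hirshberg--Winter--Zacharias, together with the observation that the correspondence structure absorbs the twist by $\alpha$ into a routine $C(X)$-module norm estimate. I do not anticipate any serious obstacle beyond matching the $\epsilon$-parameters; no properties of the vector bundle $\mathscr{V}$ beyond the standard right Hilbert $C(X)$-module norm enter, which is why the argument works uniformly in the rank of $\mathscr{V}$.
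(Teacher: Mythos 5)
Your proposal is correct and follows essentially the same route as the paper: import Rokhlin towers for the dynamical system $(X,\alpha)$ into $C(X)$, note that conditions (1), (2), (4) of \Cref{Defn31} are automatic, and verify (3) by rewriting $\xi f^{(l)}_k$ via the left action and using the Hilbert-module norm estimate $\|\xi g\|\leq\|\xi\|\,\|g\|$ (your condition $\|f^{(l)}_k - f^{(l)}_{k+1}\circ\alpha\|<\epsilon'$ is the same as the paper's $\|f^{(l)}_k\circ\alpha^{-1}-f^{(l)}_{k+1}\|<\epsilon$ since composition with $\alpha$ is isometric). The only caveat is the attribution: for a merely aperiodic (not necessarily minimal) homeomorphism of a finite-dimensional space the finiteness of the dynamical Rokhlin dimension with single towers is Szab\'o's result \cite[Corollary 2.6]{MR3342101} rather than Hirshberg--Winter--Zacharias, which is the reference the paper uses.
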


\begin{proof} Let $G$ be a finite subset of $\E$, $\epsilon>0$, and $p \in \mathbb{Z}_{>0}$ be given. Without loss of generality, we may assume $G$ consists of norm one elements. Since $\alpha : X \to X$  is aperiodic and $X$ is finite dimensional,  \cite[Corollary 2.6]{MR3342101} shows that $\alpha$ has finite Rokhlin dimension (with single towers) in the sense of \cite[Definition 2.3]{HirWinZac:RokDim}. Let $d$ denote the Rokhlin dimension of the homeomorphism $\alpha$. Then there are positive functions $f_i^{(l)} \in C(X)$, $i \in \mathbb{Z}/p\mathbb{Z}$, $0\leq l\leq d$, such that
	
	\begin{enumerate}[label=(\roman*)]
\item for any $l$, $0\leq l\leq d$, $\|f^{(l)}_i f^{(l)}_j\| < \epsilon$ whenever $i \neq j$,
\item $\| \sum_{l=0}^d \sum_{i \in \mathbb{Z}/p\mathbb{Z}} f^{(l)}_i - 1\| < \epsilon$,
\item $\|f^{(l)}_i \circ \alpha^{-1} - f^{(l)}_{i+1} \| < \epsilon$ for every $i \in \mathbb{Z}/p\mathbb{Z}$ and every $l$, $0\leq l\leq d$.
\end{enumerate}
	
It is straightforward to see that $f_i^{(l)}$ will satisfy (1), (2) and (4) of \Cref{Defn31}. For (3) we have
\begin{align*}
\| \xi f^{(l)}_i - f^{(l)}_{i+1} \xi \| &= \|f^{(l)}_i \circ \alpha^{-1} \xi - f^{(l)}_{i+1} \xi \| \leq \|f^{(l)}_i \circ \alpha^{-1} - f^{(l)}_{i+1} \| \|\xi\| < \epsilon
\end{align*}
for every $\xi\in G$, which completes the proof.
\end{proof}

\begin{theorem}\label{fin_nuc_dim}
	Let $X$ be an infinite compact metric space with $\dim(X) < \infty$,   $\mathscr{V}=[V,p,X]$ a  vector bundle, and $\alpha : X \to X$ an aperiodic homeomorphism. Let $\E = \Gamma(\mathscr{V}, \alpha)$. Then  $\mathcal{O}(\E)$ has finite nuclear dimension. If $\alpha$ is minimal, the  nuclear dimension is at most one.
\end{theorem}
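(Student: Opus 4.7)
The plan is to deduce the first statement directly from \Cref{thm.FinRokDim} by invoking the main result of Brown--Tikuisis--Zelenberg \cite{MR3845113}, which bounds the nuclear dimension of a Cuntz--Pimsner algebra $\mathcal{O}(\E)$ in terms of the nuclear dimension of the coefficient algebra $A$ and the Rokhlin dimension of the correspondence $\E$. In our setting, $A = C(X)$ is separable and unital with $\dim_{\mathrm{nuc}}(C(X)) = \dim(X) < \infty$ by \cite{WinterZac:dimnuc}, while $\E = \Gamma(\mathscr{V},\alpha)$ is finitely generated projective as a right $C(X)$-module (since $\mathscr{V}$ is a vector bundle over the compact space $X$) and has finite Rokhlin dimension by \Cref{thm.FinRokDim}. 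Combining these gives $\dim_{\mathrm{nuc}}(\mathcal{O}(\E)) < \infty$.

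For the second statement, where $\alpha$ is minimal, the strategy is to upgrade ``finite'' to ``at most one'' via the classification-theoretic machinery referenced in the introduction. By \Cref{19-10-18-3}, $\mathcal{O}(\E)$ is simple. It is also unital and separable (since $C(X)$ is separable and $\E$ is finitely generated), and it is nuclear (since it has finite nuclear dimension). Finally, $\mathcal{O}(\E)$ is infinite-dimensional, as it contains the unital infinite-dimensional subalgebra $C(X)$. Winter's theorem \cite{Win:Z-stabNucDim} then gives that $\mathcal{O}(\E)$ is $\mathcal{Z}$-stable, and the result of Castillejos--Evington--Tikuisis--White--Winter \cite{CETWW} implies $\dim_{\mathrm{nuc}}(\mathcal{O}(\E)) \leq 1$.

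The main obstacle is essentially bookkeeping rather than a genuine mathematical difficulty: one must check that the conventions of \cite{MR3845113} match those adopted here (notably \Cref{Defn31}) so that their theorem applies with no reinterpretation. The rest is an invocation of two well-established structural theorems, with all hypotheses (separability, unitality, simplicity, nuclearity, infinite-dimensionality) verified directly from the construction of $\mathcal{O}(\E)$.
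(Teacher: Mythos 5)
Your proposal is correct and follows essentially the same route as the paper: the first assertion is obtained from \Cref{thm.FinRokDim} together with the Brown--Tikuisis--Zelenberg bound (Corollary 4.16 and Example 4.2 of \cite{MR3845113}), and the improvement to nuclear dimension at most one in the minimal case comes from simplicity (\Cref{19-10-18-3}), Winter's $\mathcal{Z}$-stability theorem \cite{Win:Z-stabNucDim}, and \cite[Theorem B]{CETWW}. Your explicit verification of the hypotheses (separability, unitality, nuclearity, infinite-dimensionality) is a harmless elaboration of what the paper leaves implicit.
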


\begin{proof}
    	The $C(X)$-correspondence $\E$ is finitely generated projective and has finite Rokhlin dimension. Since $\dim(X) < \infty$, that $\mathcal{O}(\E)$ has finite nuclear dimension follows immediately from Corollary 4.16 and Example 4.2 in \cite{MR3845113}. Thus when $\alpha$ is minimal, $\mathcal{O}(\E)$ is $\mathcal{Z}$-stable by \cite{Win:Z-stabNucDim}, and that $\mathcal{O}(\E)$ has nuclear dimension at most one now follows from \cite[Theorem B]{CETWW}.
\end{proof}

	Let $\mathcal{C}$ denote the class of $\mathrm{C}^*$-algebras of the form $\mathcal{O}(\Gamma(\mathscr{V}, \alpha))$ for $X$ an infinite compact metric space with $\dim(X) < \infty$,  $\mathscr{V} = [ V, p, X]$ a vector bundle, and $\alpha : X \to X$ a minimal homeomorphism. Now that we know that any $A \in \mathcal{C}$ has finite nuclear dimension, we are able to apply the machinery of the Elliott classification programme. The classification theorem, stated here, is the culmination of many years of work. 

\begin{theorem}[see for example \cite{CETWW, BBSTWW:2Col, EllGonLinNiu:ClaFinDecRan, GongLinNiue:ZClass, GongLinNiue:ZClass2, TWW}]
 \label{ClassThm} Let $A$ and $B$ be separable, unital, simple, infinite dimensional \mbox{$\mathrm{C}^*$-algebras} with finite nuclear dimension and which satisfy the UCT. Suppose there is an isomorphism 
\[ \psi : \Ell(A) \to \Ell(B).\]
Then there is a $^*$-isomorphism 
\[ \Psi : A \to B,\]
which is unique up to approximate unitary equivalence and satisfies $\Ell(\Psi) = \psi$.
\end{theorem}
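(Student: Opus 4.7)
The plan is to treat this as a dichotomy between the stably finite and purely infinite cases and then invoke the appropriate classification machinery in each case, using finite nuclear dimension as the gateway to regularity. First I would note that by \cite{Win:Z-stabNucDim} (and \cite{CETWW} for the converse direction), finite nuclear dimension for a simple, separable, unital, infinite dimensional $\mathrm{C}^*$-algebra is equivalent to $\mathcal{Z}$-stability. Since $A$ is simple and $\mathcal{Z}$-stable, a dichotomy result (Kirchberg's or R\o rdam's) forces $A$ to be either stably finite or purely infinite, and likewise for $B$. Because $\Ell(A) \cong \Ell(B)$ preserves order on $K_0$ and the tracial simplex, $A$ is stably finite if and only if $B$ is: in the stably finite case both have non-empty tracial simplex, while in the purely infinite case both have $T(A) = T(B) = \emptyset$ and the $K_0$-groups are unperforated and have trivial order.

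In the purely infinite case, the proof I would write down is essentially a citation to the Kirchberg--Phillips theorem: any two simple, separable, unital, nuclear, purely infinite $\mathrm{C}^*$-algebras satisfying the UCT are classified by the pointed pair $(K_0, [1], K_1)$, which coincides with the Elliott invariant in that setting. This gives the $^*$-isomorphism $\Psi : A \to B$ with $\Ell(\Psi) = \psi$, and uniqueness up to approximate unitary equivalence follows from the standard Kirchberg--Phillips uniqueness theorem.

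In the stably finite case the plan is more involved. The key ingredients I would assemble are: (i) the Tikuisis--White--Winter theorem \cite{TWW}, giving that every faithful trace on a simple, separable, unital, nuclear, UCT $\mathrm{C}^*$-algebra with finite nuclear dimension is quasidiagonal; (ii) the Gong--Lin--Niu and Elliott--Gong--Lin--Niu classification \cite{GongLinNiue:ZClass, GongLinNiue:ZClass2, EllGonLinNiu:ClaFinDecRan} of simple, separable, unital $\mathrm{C}^*$-algebras which are rationally tracially approximately subhomogeneous (or have finite decomposition rank) and satisfy the UCT; and (iii) the two-coloured classification / approximate intertwining techniques from \cite{BBSTWW:2Col}. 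The strategy is to show that any $A$ as in the hypotheses is isomorphic to an algebra in the classifiable subclass (via the quasidiagonality theorem one models $A$ tensored with the UHF algebra of infinite type as a TAI algebra of the appropriate form), and then to use an Elliott intertwining argument at the level of the models to upgrade the given isomorphism of invariants $\psi$ to a $^*$-isomorphism $\Psi$ with $\Ell(\Psi) = \psi$, unique up to approximate unitary equivalence.

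The main obstacle, and the step that historically required the most new mathematics, is exactly step (i) above: producing the quasidiagonality of traces for arbitrary simple nuclear UCT algebras with finite nuclear dimension. Everything else can be organized around Elliott's two-sided approximate intertwining, but to even start that intertwining one needs enough internal structure on $A$ (finite decomposition rank after tensoring with a UHF algebra, TAI structure, etc.) to feed into the existence and uniqueness theorems for $^*$-homomorphisms between such algebras at the level of the Elliott invariant. In a full write-up I would therefore separate the argument into (a) a regularity step deducing $\mathcal{Z}$-stability and, in the finite case, quasidiagonality of traces, (b) a model step placing $A$ (rationalized) inside a classifiable model subclass, and (c) an intertwining step converting $\psi$ into $\Psi$, with uniqueness coming from stable uniqueness theorems for the model subclass.
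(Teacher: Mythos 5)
The paper offers no proof of this theorem at all---it is stated purely as a synthesis of the cited literature---so there is nothing internal to compare your argument against. Your roadmap is an accurate account of how those citations combine (finite nuclear dimension giving $\mathcal{Z}$-stability, the finite/purely infinite dichotomy, Kirchberg--Phillips on one side and the quasidiagonality-of-traces plus Gong--Lin--Niu/Elliott--Gong--Lin--Niu/two-coloured machinery on the other), and is consistent with the intended reading of the theorem as a black-box import.
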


By \cite[Proposition 8.8]{Katsura2004}, since $C(X)$ is commutative, $\mathcal{O}(\Gamma(\mathscr{V}, \alpha))$ satisfies the UCT. This give us the following classification theorem for $\mathcal{C}$.

\begin{theorem}\label{class}
 Suppose that $A, B \in \mathcal{C}$ and 
	\[ \psi : \Ell(A) \to \Ell(B) \]
	is an isomorphism. Then there exists a $^*$-isomorphism 
	\[ \Psi : A \to B,\]
which is unique up to approximate unitary equivalence and satisfies $\Ell(\Psi) = \psi$.
\end{theorem}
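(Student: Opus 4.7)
The plan is to reduce Theorem~\ref{class} directly to the abstract classification theorem (Theorem~\ref{ClassThm}). All the ingredients have been assembled in the preceding sections, so the proof will consist of verifying, one by one, that each $A = \mathcal{O}(\Gamma(\mathscr{V},\alpha)) \in \mathcal{C}$ satisfies the hypotheses of Theorem~\ref{ClassThm}, and then quoting that theorem.

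First I would check the structural hypotheses. Separability is immediate: $C(X)$ is separable because $X$ is a compact metric space, and $\Gamma(\mathscr{V})$ is finitely generated as a right Hilbert $C(X)$-module, so the Cuntz--Pimsner algebra is generated by a separable subalgebra together with finitely many additional generators. Unitality follows because $C(X)$ is unital and, as noted in the discussion after \eqref{innerproduct}, $\mathrm{id}_{\Gamma(\mathscr{V})} \in \mathcal{K}(\Gamma(\mathscr{V}))$, so the covariant representation produces a unit. Simplicity is precisely \Cref{19-10-18-3} combined with our standing minimality assumption on $\alpha$. Infinite dimensionality follows from the fact that $X$ is infinite and $\alpha$ is minimal: $\mathcal{O}(\E)$ contains an isomorphic copy of $C(X)$ (see \Cref{rem.DAofLB}), which is already infinite dimensional.

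Next I would verify the two remaining regularity hypotheses. Finite nuclear dimension is exactly \Cref{fin_nuc_dim}: under our assumptions ($X$ finite dimensional, $\alpha$ minimal, $\mathscr{V}$ a vector bundle), the nuclear dimension of $\mathcal{O}(\Gamma(\mathscr{V},\alpha))$ is at most one. The Universal Coefficient Theorem holds because $\mathcal{O}(\E)$ is the Cuntz--Pimsner algebra of a $\mathrm{C}^*$-correspondence over the commutative (hence Type I, hence UCT) algebra $C(X)$, so \cite[Proposition 8.8]{Katsura2004} applies.

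Having verified the hypotheses for both $A$ and $B$, the final step is to feed the given isomorphism $\psi: \Ell(A) \to \Ell(B)$ into Theorem~\ref{ClassThm}, which delivers the required $^*$-isomorphism $\Psi: A \to B$ with $\Ell(\Psi) = \psi$, unique up to approximate unitary equivalence. There is no genuine obstacle here: the work was done in building up to \Cref{fin_nuc_dim}, and Theorem~\ref{class} is essentially the packaging of that result as an Elliott-type classification statement, so the proof should be one short paragraph invoking the above verifications.
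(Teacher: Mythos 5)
Your proposal is correct and follows exactly the route the paper takes: the paper deduces Theorem~\ref{class} from Theorem~\ref{ClassThm} by combining \Cref{19-10-18-3} (simplicity), \Cref{fin_nuc_dim} (finite nuclear dimension), and \cite[Proposition 8.8]{Katsura2004} (UCT), with the remaining hypotheses implicit. Your explicit verification of separability, unitality, and infinite dimensionality is a harmless elaboration of the same argument.
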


\begin{corollary} \label{dichotomy}
Let $A = \mathcal{O}(\Gamma(\mathscr{V}, \alpha)) \in \mathcal{C}$. 
\begin{enumerate}
    \item If $\mathscr{V}$ is a line bundle, $A$ has stable rank one.
    \item If $\mathscr{V}$ has (not necessarily constant) rank greater than one, $A$ is purely infinite. 
\end{enumerate}
\end{corollary}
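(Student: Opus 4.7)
The plan is to derive both parts from the structural results already assembled in the previous section, combined with Rørdam's dichotomy for simple $\mathcal{Z}$-stable $\mathrm{C}^*$-algebras. By \Cref{19-10-18-3} and \Cref{fin_nuc_dim}, every $A \in \mathcal{C}$ is simple, separable, unital, nuclear, and of finite nuclear dimension; by the theorem of Winter cited just above (\cite{Win:Z-stabNucDim}), this last property forces $A$ to be $\mathcal{Z}$-stable. Rørdam's dichotomy for simple $\mathcal{Z}$-stable $\mathrm{C}^*$-algebras then asserts that $A$ is either stably finite or purely infinite, so it suffices to decide which alternative holds in each case.

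For part (1), when $\mathscr{V}$ is a line bundle, \Cref{cor:stabfin} already shows that $A = \mathcal{O}(\Gamma(\mathscr{V},\alpha))$ is stably finite. Thus $A$ is a simple, unital, stably finite, $\mathcal{Z}$-stable $\mathrm{C}^*$-algebra, and invoking Rørdam's result that simple, unital, finite, $\mathcal{Z}$-absorbing $\mathrm{C}^*$-algebras have stable rank one completes this case.

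For part (2), when $\mathscr{V}$ has a fibre of rank strictly greater than one, I would use \Cref{prop:traces}, which rules out the existence of a faithful tracial state on $A$. Because $A$ is simple, any tracial state would be automatically faithful, so in fact $T(A) = \emptyset$. But a simple, unital, stably finite, exact $\mathrm{C}^*$-algebra admits a tracial state by Haagerup's theorem (applicable since nuclearity implies exactness), so $A$ cannot be stably finite. The Rørdam dichotomy then forces $A$ to be purely infinite.

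The main potential obstacle here is a bookkeeping one rather than a conceptual one: one should double-check that the conclusion of \Cref{prop:traces} — stated for $\mathscr{V}$ "not a line bundle" — really does apply whenever $\mathscr{V}$ has some fibre of rank $>1$, which is precisely the hypothesis of (2). All the genuine content (simplicity, finite nuclear dimension, the trace computation, stable finiteness in the line-bundle case) has been established earlier, so the remaining work is simply to cite Rørdam's dichotomy and his stable-rank-one theorem in the right order.
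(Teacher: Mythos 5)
Your proof is correct, and part (1) is essentially identical to the paper's argument: stable finiteness from \Cref{cor:stabfin}, $\mathcal{Z}$-stability from finite nuclear dimension via Winter's theorem, and then R{\o}rdam's stable-rank-one theorem for finite $\mathcal{Z}$-absorbing algebras. For part (2) you take a mildly different route. The paper passes from ``no faithful tracial state'' (\Cref{prop:traces}) plus simplicity to tracelessness and then cites \cite[Theorem 5.4]{WinterZac:dimnuc} directly (a simple, traceless $\mathrm{C}^*$-algebra of finite nuclear dimension is purely infinite), whereas you deduce tracelessness the same way but then combine the R{\o}rdam dichotomy for simple unital $\mathcal{Z}$-stable algebras with the Blackadar--Handelman/Haagerup fact that a simple, unital, stably finite, exact $\mathrm{C}^*$-algebra admits a tracial state. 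Both routes are valid and rest on the same inputs established earlier (\Cref{19-10-18-3}, \Cref{fin_nuc_dim}, \Cref{prop:traces}, \Cref{cor:stabfin}); yours makes the finite/purely-infinite dichotomy explicit at the cost of one extra citation, while the paper's is a single direct appeal to Winter--Zacharias. Your flagged concern about ``not a line bundle'' versus ``some fibre of rank greater than one'' is harmless: the proof of \Cref{prop:traces} only needs one chart with $n_i \geq 2$, which is exactly the hypothesis of (2).
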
 

\begin{proof}
When $\mathscr{V}$ is a line bundle, $A$ is stably finite by \Cref{cor:stabfin}. Since $A$ has finite nuclear dimension, it is $\mathcal{Z}$-stable by \cite[Corollary 7.3]{Win:Z-stabNucDim}. This in turn implies that $A$ has stable rank one \cite[Theorem 6.7]{Ror:Z-absorbing}, showing (i).

If $\mathscr{V}$ is not a line bundle, it has no tracial states by \Cref{prop:traces}. Since $A$ is simple and has finite nuclear dimension, $A$ is purely infinite by \cite[Theorem 5.4]{WinterZac:dimnuc}, showing (ii).
\end{proof}

\section{Orbit-breaking subalgebras of \texorpdfstring{$\mathcal{O}(\Gamma(\mathscr{V}, \alpha))$}{O(Gamma(V, alpha)} } \label{sec.large}

In this section, we restrict to the case where the Hilbert $\mathrm{C}^*$-correspondence comes from a Hilbert $C(X)$-bimodule. This means that $\mathscr{V} = [V, p, X]$ is a line bundle (Proposition \ref{prop.CharOfFull}). 

Let $X$ be an infinite compact metric space, $\alpha : X \to X$ a minimal homeomorphism and let $\E := \Gamma(\mathscr{V}, \alpha)$. For any open subset $U \subset X$ we denote by $\E_{X\setminus U} := C_0(U)\E$ the Hilbert $C(X)$-bimodule given by restricting the left action to $C_0(U)$. The Cuntz--Pimsner algebra $\mathcal{O}(\E_{X\setminus U})$ is a $\mathrm{C}^*$-subalgebra of $\mathcal{O}(\E)$. Let $\mathcal{E} =\Gamma(\mathscr V,\alpha)$ where $\mathscr{V}$ is a trivial line bundle. As shown in Example~\ref{CPexamples} (3), the Cuntz--Pimsner algebra is just the usual crossed product by $\alpha$,  that is, $C(X) \rtimes_{\alpha} \mathbb{Z}$. When $U = X \setminus Y$ where $Y$ is a closed non-empty subset, the subalgebra $\mathcal{O}(\E_{Y})$ is then the $\mathrm{C}^*$-subalgebra $\mathrm{C}^*(C(X), C_0(X \setminus Y) u) \subset C(X)\rtimes_{\alpha} \mathbb{Z}$, where $u$ denotes the unitary implementing $\alpha$, which is called an \emph{orbit-breaking} subalgebra. When $Y$ meets every $\alpha$-orbit at most once, that is, $\alpha^n(Y) \cap Y = \emptyset$ for every $n \in \mathbb{Z} \setminus \{0\}$, orbit-breaking subalgebras have been very useful objects in the study of crossed products by minimal homeomorphisms. Originally introduced by Putnam in his study of crossed products arising from Cantor minimal systems \cite{Putnam:MinHomCantor}, they were subsequently used by various authors for more general dynamical systems, see for example \cite{LinPhi:MinHom,TomsWinter:minhom, Str:XxSn, StrWin:Z-stab_min_dyn, QLin:Ay, EllNiu:MeanDimZero}, as they are often more tractable while at the same time sharing many properties of the crossed product in which they are contained. For example, they are also simple (see \cite[Proposition 2.1]{DPS:OrbitBreaking}) and the inclusion of  $\mathcal{O}(\E_Y) \into C(X) \rtimes_{\alpha} \mathbb{Z}$ induces an affine homeomorphism of tracial state spaces. If $Y = \{y\}$, then there is moreover an isomorphism of $K_0$-groups \cite[Theorem 2.4 and Example 2.6]{Put:K-theoryGroupoids}. 

Orbit-breaking algebras also give rise to interesting $\mathrm{C}^*$-algebras in their own right, particularly from the point of view of the Elliott classification programme.  For example, one can realise any simple unital AF algebra (see \cite{Putnam:MinHomCantor, GioPutSkau:orbit}),  the Jiang--Su algebra \cite{DPS:JiangSu}, and simple nuclear $\mathrm{C}^*$-algebras with a wide range of $K$-theory \cite{DPS:OrbitBreaking} as orbit-breaking algebras.  

The definition of an orbit-breaking subalgebra easily generalises to the current setting:

\begin{definition}
Let $X$ be an infinite compact metric space, $\mathscr{V} = [V,p,X]$ a vector bundle and $\alpha : X \to X$ a minimal homeomorphism. Let $\E := \Gamma(\mathscr{V}, \alpha)$.  Let $Y \subset X$ be a non-empty closed subset. The \emph{orbit-breaking subalgebra} of $\mathcal{O}(\E)$ at $Y$ is $\mathcal{O}(C_0(X \setminus Y)\E)$, that is, the Cuntz--Pimsner algebra of the \cstar-correspondence over $C(X)$ given by $C_0(X \setminus Y) \E$. When $\mathscr{V}$ is a line bundle, we call $\E_Y := C_0(X \setminus Y) \E$ an \emph{orbit-breaking bimodule}.
\end{definition}

\subsection{Orbit-breaking bimodules}

We can't apply many of the results of the last section to orbit-breaking bimodules because they are not full. Let $\E_Y$ be an orbit-breaking bimodule. It is easy to see that $\langle \E_Y, \E_Y \rangle_{\E} = C_0(X \setminus \alpha^{-1}(Y))$ while  $_\E\langle \E_Y, \E_Y \rangle = C_0(X \setminus Y)$ and so $\E_Y$ is neither left nor right full. We begin this subsection with the following observation.

\begin{lemma} \label{lem.SectionsOfEy}
For any element $\xi \in \E$, we have that $\xi \in \E_Y$ if and only if $\xi$ vanishes on $\alpha^{-1}(Y)$. 
\end{lemma}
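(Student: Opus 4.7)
The plan is to unwind the definition $\E_Y = C_0(X\setminus Y)\E$ using the explicit formula for the left action on $\Gamma(\mathscr{V},\alpha)$, then use an approximate unit argument for the nontrivial direction.

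First I would handle the easy ($\Rightarrow$) direction. By definition, $\xi \in \E_Y$ lies in the closed linear span of elements of the form $f\cdot\eta$ with $f\in C_0(X\setminus Y)$ and $\eta\in\E$. By the formula for the structure map of $\Gamma(\mathscr{V},\alpha)$, we have $f\cdot\eta = \eta\,(f\circ\alpha)$, and since $f$ vanishes on $Y$, the function $f\circ\alpha$ vanishes on $\alpha^{-1}(Y)$. Hence $\eta(f\circ\alpha)$ vanishes pointwise on $\alpha^{-1}(Y)$. Because evaluation at a point is a continuous map $\E \to p^{-1}(x)$ (inner product norm dominates pointwise norm), this property is preserved under finite sums and norm-limits.

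For the converse, suppose $\xi$ vanishes on $\alpha^{-1}(Y)$. Then the positive function $\langle\xi,\xi\rangle_\E\in C(X)$ vanishes on $\alpha^{-1}(Y)$, so it lies in the ideal $C_0(X\setminus\alpha^{-1}(Y))$. I would pick a positive contractive approximate unit $(e_\lambda)$ of $C_0(X\setminus Y)$; since $\alpha$ is a homeomorphism, $(e_\lambda\circ\alpha)$ is then an approximate unit for $C_0(X\setminus\alpha^{-1}(Y))$. The elements $e_\lambda\cdot\xi = \xi(e_\lambda\circ\alpha)$ clearly lie in $\E_Y$, so it suffices to show $e_\lambda\cdot\xi \to \xi$. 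A direct computation using sesquilinearity of the right inner product gives
\[ \langle \xi - e_\lambda\cdot\xi,\ \xi - e_\lambda\cdot\xi\rangle_\E = (1 - e_\lambda\circ\alpha)^2\,\langle\xi,\xi\rangle_\E, \]
and the right-hand side converges to $0$ in $C(X)$ by the defining property of the approximate unit applied to $\langle\xi,\xi\rangle_\E \in C_0(X\setminus\alpha^{-1}(Y))$. This yields $\|\xi - e_\lambda\cdot\xi\|_\E \to 0$, so $\xi\in\E_Y$.

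I do not expect any genuine obstacle. The only subtle point, and the main source of potential confusion, is keeping track of the discrepancy between left and right actions: the left action of $C(X)$ on $\Gamma(\mathscr{V},\alpha)$ is twisted by $\alpha$, so restricting the left action to the ideal $C_0(X\setminus Y)$ corresponds to vanishing on $\alpha^{-1}(Y)$ rather than on $Y$ itself. Everything else is routine once this shift is accounted for.
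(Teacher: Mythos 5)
Your proof is correct. The forward direction is the same as the paper's. For the converse, you take a genuinely different route: the paper uses the finite Parseval frame $\xi_j(x) = h_j(x,\gamma_j^{1/2}(x))$ coming from an atlas for the line bundle and the reconstruction formula $\xi = \sum_j \xi_j\langle\xi_j,\xi\rangle_\E$, observing that with $f_j := \langle\xi_j,\xi\rangle_\E\circ\alpha^{-1} \in C_0(X\setminus Y)$ one gets the \emph{exact finite} decomposition $\xi = \sum_j f_j\xi_j$, so that $\xi$ lies in the algebraic product $C_0(X\setminus Y)\E$ with no limit needed. Your approximate-unit argument instead only exhibits $\xi$ as a norm limit of elements $e_\lambda\cdot\xi = \xi(e_\lambda\circ\alpha)\in\E_Y$, which suffices since $\E_Y$ is closed; the key identity $\langle\xi-e_\lambda\cdot\xi,\xi-e_\lambda\cdot\xi\rangle_\E = (1-e_\lambda\circ\alpha)^2\langle\xi,\xi\rangle_\E$ is valid because $C(X)$ is commutative and $e_\lambda$ is a positive contraction, and $(e_\lambda\circ\alpha)$ is indeed an approximate unit for $C_0(X\setminus\alpha^{-1}(Y))$, which contains $\langle\xi,\xi\rangle_\E$. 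What your approach buys is generality: it never uses that $\mathscr{V}$ is a line bundle, nor that $\E$ is finitely generated, so it would apply verbatim to any $C(X)$-correspondence whose left action is right multiplication twisted by a homeomorphism. What the paper's approach buys is the sharper conclusion that $\xi$ is a finite sum of elementary products $f_j\xi_j$, a form that is reused in later arguments (e.g.\ in \Cref{prop:CondExPict}).
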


\begin{proof}  If $\xi = f \eta \in \E_Y$ for some $\eta \in \E$ and $f \in C_0(X \setminus Y)$, then $\xi(x) = \eta f \circ \alpha(x) = 0$ whenever $x \in \alpha^{-1}(Y)$. Since $\E_Y = \overline{\mathrm{span}}\{f \xi \mid f \in C_0(X \setminus Y), \xi \in \E\}$, it follows that if $\xi \in \E_Y$ is arbitrary, then $\xi(x)= 0$ for every $x \in \alpha^{-1}(Y).$

Conversely, let $\{h_j :  U_j \times \mathbb{C} \to \mathscr{V}|_{U_j}\}_{j= 1, \dots, n}$ be an atlas for $\mathscr{V}$ and $\gamma_1, \dots, \gamma_n$ a partition of unity subordinate to $U_1, \dots, U_n$. Let $\xi_j = h_j(x, \gamma^{1/2}_j(x))$, $1 \leq j \leq n$ be a corresponding set of generators for $\E$.  Suppose $\xi \in \E$ satisfies $\xi(x) = 0$ for every $x \in \alpha^{-1}(Y)$. Then 
\[ \langle \xi_j, \xi \rangle_\E (x) = \sum_{k=1}^n \gamma_k(x) \overline{h_k^{-1}(\xi_j(x))}h_k^{-1}(\xi(x)) = 0,
\]for every $x \in \alpha^{-1}(Y)$.

Let $f_j = \langle \xi_j , \xi \rangle_\E \circ \alpha^{-1}$. Then $f_j(x) = 0$ for every $x \in Y$, and  
\[ \xi = \sum_{j=1}^n \xi_j \langle \xi_j, \xi \rangle_\E = \sum_{j=1}^n \langle \xi_j, \xi \rangle_\E \circ \alpha^{-1} \xi_j = \sum_{j=1}^n f_j \xi_j .\] Thus $\xi\in \E_Y$. 
\end{proof}

The above lemma allows us to show that if $Y$ is not clopen, $\E_Y$ is not algebraically finitely generated, nor even algebraically countably generated. Fix an atlas $\{h_U : U \times \mathbb{C} \to \mathscr{V}|_{U}\}_{U \in \mathcal{U}}$ for $\mathscr{V}$.  Let $W \subset X$ be an open subset such that $W \cap \alpha^{-1}(Y) \neq \emptyset$, $\mathscr{V}|_W$ is trivial, and $\overline{W} \subsetneq V$ for some $V \in \mathcal{U}$. Suppose that $(\xi_n)_{n \in \mathbb{Z}_{\geq 0}}$ generate $\E_Y$. Then there exist $(f_n)_{n \in \mathbb{Z}_{\geq 0}} \subset C_0(V)$ such that $\xi_n(x) = h_V(x, f_n(x))$ for every $x \in W$ and every $n \in \mathbb{Z}_{\geq 0}$.  We may assume that $\|f_n\| \leq 1$ for every $n \in \mathbb{Z}_{\geq 0}$.

Let $f(x) = \sum_{n \in \mathbb{Z}_{
\geq 0}} 2^{-n}|f_n(x)|^{1/2}$ for every $x \in W$ and $f(x) = 0$ for every $x \in X \setminus V$, and let $\xi(x) :=h_V(x, f(x))$. Note that $\xi \neq 0$. By assumption $(\xi_n)_{n\in \mathbb{Z}_{\geq 0}}$ generate $\E_Y$, so there are $m \in \mathbb{Z}_{> 0}$ and $g_1, \dots, g_m \in C(X)$ such that $\xi(x) = \sum_{j=1}^m h_V(x, f_{n_j}(x)) g_j(x) = \sum_{j=1}^m h_V(x, f_{n_j} g_j(x))$ for some $n_1, \dots, n_m \in \mathbb{Z}_{\geq0}$. Thus $f = \sum_{j=1}^m f_{n_j} g_j$. Let $K = \max_{j = 1, \dots, m} \| g_j \|$. Since $\xi_{n_j} \in \E_Y$, we have that $f_{n_j}(y) = 0$ for any $y \in \alpha^{-1}(Y) \cap W$ and $j=1,\dots, m$. Let $y_0 \in \alpha^{-1}(Y) \cap W$, and let $U$ be a neighbourhood of $y_0$ contained in $W$ such that $|f_{n_j}(x)|^{1/2} < 2^{-n_j-1} K^{-1}$ for every $x \in U$, $1 \leq j \leq m$. Then  
\begin{align*}
\sum_{n \in \mathbb{Z}_{\geq 0}} 2^{-n}|f_n(x)|^{1/2} &= |f(x)| \\
&\leq \sum_{j=1}^m |f_{n_j}(x)||g_j(x)|\\
&<  \sum_{j=1}^m 2^{-n_j-1}  K^{-1}|f_{n_j}(x)|^{1/2}|g_j(x)| \\
&< \sum_{j=1}^m 2^{-n_j-1}|f_{n_j}(x)|^{1/2} \\
&< |f(x)|/2,
\end{align*} for every $x \in U \setminus \alpha^{-1}(Y)$, which is impossible. Thus $\xi \neq \sum_{j=1}^m \xi_{n_j} g_j$ for any $m \in \mathbb{Z}_{> 0}$, $n_1, \dots, n_m\in \mathbb{Z}_{\geq 0}$ and $g_1, \dots, g_j \in C(X)$, so $\E_Y$ is not countably generated. 

 Although $\E_Y$ is not algebraically countably generated, we can still determine when an element $a \in \mathcal{O}(\E)$ is in the orbit-breaking subalgebra $\mathcal{O}(\E_Y)$. First, we need a lemma. 

Recall that when $\E = \Gamma(\mathscr{V}, \alpha)$ with $\mathscr{V}$ a line bundle, $\bigoplus_{n \in \mathbb{Z}} E_n$ is dense in $\mathcal{O}(\E)$ where $E_n \cong \E^{\otimes n}$ and $E_{-n} = E_n^*$ for every $n > 0$ (see \Cref{rem.DAofLB}). In what follows  $\Phi : \mathcal{O}(\E) \to \mathcal{O}(\E)^{\sigma} = E_0 \cong C(X)$ denotes the conditional expectation defined in \Cref{conditionalexpectation}. 

\begin{lemma}\label{lem:cesaro}
Let $X$ be an infinite compact metric space, $\mathscr{V} = [V,p,X]$ a vector bundle and $\alpha : X \to X$ a minimal homeomorphism. Let $\E := \Gamma(\mathscr{V}, \alpha)$. For any $a \in \mathcal{O}(\E)$ and $N > 0$, let $\xi_{j,k} $, $1 \leq k \leq d_j$ be generators of $E_j$, $-N \leq j \leq N$, satisfying $\sum_{k=1}^{d_j} \xi_{j,k}^* \xi_{j,k} = 1$. Define $\Psi_N : \mathcal{O}(\E) \to \bigoplus_{n \in \mathbb{Z}} E_n$ by
\begin{equation*} \Psi_N(a) := \sum_{j=-N}^N \left(1 - \frac{|j|}{N+1} \right)
\left(\sum_{k=1}^{d_j} \Phi(a \xi_{j,k}^*) \xi_{j,k}\right).
\end{equation*}
Then  $\lim_{N \to \infty} \|\Psi_N(a) - a\| \to 0$.
\end{lemma}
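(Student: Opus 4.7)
The plan is to recognise $\Psi_N(a)$ as the $N$-th Cesàro (Fej\'er) mean of the Fourier series of $a$ with respect to the gauge action $\sigma\colon\T\to\Aut(\mathcal{O}(\E))$. For each $j\in\Z$, I would introduce the spectral projection $P_j\colon \mathcal{O}(\E)\to\mathcal{O}(\E)$ defined by
\[
P_j(a):=\int_{\T}z^{-j}\sigma_z(a)\,dz.
\]
Since $z\mapsto\sigma_z(a)$ is norm-continuous on the compact group $\T$, this Bochner integral is well defined, each $P_j$ is a contraction, and $P_0$ coincides with the conditional expectation $\Phi$ from \Cref{conditionalexpectation}.

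The key step is the identity
\[
\sum_{k=1}^{d_j}\Phi(a\xi_{j,k}^*)\,\xi_{j,k}=P_j(a).
\]
Because $\xi_{j,k}\in E_j$ is homogeneous of degree $j$ under the gauge action, $\sigma_z(\xi_{j,k}^*)=z^{-j}\xi_{j,k}^*$. Pulling the scalar $z^{-j}$ and the $z$-independent factor $\xi_{j,k}^*$ outside the integral defining $\Phi=P_0$ gives
\[
\Phi(a\xi_{j,k}^*)=\int_{\T}\sigma_z(a)\,z^{-j}\xi_{j,k}^*\,dz=\Bigl(\int_{\T}z^{-j}\sigma_z(a)\,dz\Bigr)\xi_{j,k}^*=P_j(a)\,\xi_{j,k}^*.
\]
Summing over $k$ and using the normalisation $\sum_{k=1}^{d_j}\xi_{j,k}^*\xi_{j,k}=1$ yields the claimed identity.

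Substituting this into the definition of $\Psi_N$ gives
\[
\Psi_N(a)=\sum_{j=-N}^{N}\Bigl(1-\tfrac{|j|}{N+1}\Bigr)P_j(a)=\int_{\T}F_N(z)\,\sigma_z(a)\,dz,
\]
where $F_N(z)=\sum_{j=-N}^{N}\bigl(1-|j|/(N+1)\bigr)z^j$ is the classical Fej\'er kernel (we use that $F_N(z^{-1})=F_N(z)$ for $z\in\T$). Since $F_N\geq 0$, $\int_{\T}F_N\,dz=1$, and $F_N$ concentrates at $z=1$ as $N\to\infty$, the standard approximate-identity argument---valid for any continuous Banach-space-valued function on $\T$ applied to $f(z)=\sigma_z(a)$---gives $\Psi_N(a)\to\sigma_1(a)=a$ in norm.

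The main obstacle is the identity $\sum_k\Phi(a\xi_{j,k}^*)\xi_{j,k}=P_j(a)$; its proof hinges on both the spectral behaviour of $\xi_{j,k}$ under the gauge action and the normalisation built into the hypothesis. Once that identity is in hand, the convergence is classical Fej\'er theory and requires no further input from the structure of $\mathcal{O}(\E)$.
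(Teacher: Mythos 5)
Your proof is correct and follows essentially the same route as the paper: both hinge on the identity $\sum_{k}\Phi(a\xi_{j,k}^*)\xi_{j,k}=\int_{\T}z^{-j}\sigma_z(a)\,dz$, which turns $\Psi_N$ into convolution with the Fej\'er kernel, after which the standard approximate-identity argument gives the convergence (the paper cites the crossed product case for this last step and omits the details). The only cosmetic difference is that you derive the key identity directly for arbitrary $a$ from the homogeneity $\sigma_z(\xi_{j,k}^*)=z^{-j}\xi_{j,k}^*$, whereas the paper first computes it for elements of the dense subalgebra $\bigoplus_{n}E_n$ by expanding into homogeneous components.
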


\begin{proof}
First we show that for each $N >0$, $\Psi_N$ is contractive. Let $b \in \bigoplus_{n \in \mathbb{Z}} E_n$, so that  $b = \sum_{n=-K}^K  \eta_n$ for some $K\in\mathbb{Z}_{>0}$ and $\eta_n \in E_n$, $-K \leq n \leq K$. We have

\begin{align*}
    \Phi(b \xi_{j,k}^*) \xi_{j,k}  &=  \int_0^1 \sigma_{e^{2 
    \pi i t}}( b \xi_{j,k}^*) dt \, \xi_{j,k} \\
    &= \int_0^1 \left( \sum_{n=-K}^K  e^{2 
    \pi i (n-j) t} \eta_{n} \xi_{j,k}^* \right) dt \, \xi_{j,k} \\
    &=  \int_0^1 e^{-2 \pi i j t} \left( \sum_{n=-K}^K  e^{2 
    \pi i n t} \eta_{n} \right) dt \, \xi_{j,k}^*\xi_{j,k}\\
    &= \int_0^1 e^{-2 \pi i j t} \sigma_{e^{2 
    \pi i t}}( b )  dt \, \xi_{j,k}^*\xi_{j,k}, \\
\end{align*}
so that 
\begin{align*}
    \sum_{k=1}^{d_j}\Phi(b \xi_{j,k}^*) \xi_{j,k}
    &= \int_0^1 e^{-2 \pi i j t} \sigma_{e^{2 
    \pi i t}}( b )  dt.
\end{align*}
Thus $\sum_{k=1}^{d_j} \Phi(a \xi_{j,k}^*) \xi_{j,k} =  \int_0^1 e^{-2 \pi i j t} \sigma_{e^{2 \pi i t}}( a )  dt$ 
for every $a \in \mathcal{O}(\E)$. It follows that
\[
    \Psi_N(a) = \int_0^1 \sum_{j=-N}^N \left(1 - \frac{|j|}{N+1} \right)  e^{-2 \pi i j t} \sigma_{e^{2 
    \pi i t}}( a )  dt.
\]
That $\lim_{N \to \infty} \|\Psi_N(a) - a\| \to 0$ is similar to the case for crossed products (see for example \cite[Theorem VIII.2.2]{Dav:C*-ex}). The details are omitted.
\end{proof}

\begin{proposition} \label{prop:CondExPict}
Let $X$ be an infinite compact metric space, $\alpha : X\to X$ a  homeomorphism and $\mathscr{V} = [V, p, X]$ a line bundle. Let $Y \subset X$ be a non-empty closed subset, and set $\E = \Gamma(\mathscr{V}, \alpha)$ and $\E_Y := C_0(X \setminus Y) \E.$ For $n \in \mathbb{Z}$, set
\[\ Y_n = \left \{ \begin{array}{cc} \bigcup_{j=0}^{n-1} \alpha^j(Y) &  n >0 \\ \emptyset & n = 0 \\ \bigcup_{j=1}^{-n} \alpha^{-j}(Y)& n<0. \end{array} \right.
\]
Let $a \in \mathcal{O}(\E)$. Then $a \in \mathcal{O}(\E _Y)$ if and only if for every $m\in\mathbb{Z}_{>0}$ and $ \eta_1, \dots, \eta_m \in \E$, we have 
\begin{equation}\label{condition1}
\Phi(a \eta_1^* \cdots \eta_m^*) \in C_0(X \setminus Y_m),
\end{equation}
and
 \begin{equation}\label{condition3}
 \Phi(a \eta_1 \cdots \eta_m)  \in C_0(X \setminus Y_{-m}).
 \end{equation}
\end{proposition}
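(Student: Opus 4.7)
My plan is to use the spectral decomposition of $\mathcal{O}(\E)$ under the gauge action together with the Ces\`aro-type approximation of \Cref{lem:cesaro}. The central auxiliary input, which must be established en route, is a tensor-power version of \Cref{lem.SectionsOfEy}: for each $j>0$ the intersection $\mathcal{O}(\E_Y)\cap E_j$ coincides with the subspace of $E_j\cong\E^{\otimes j}$, viewed as sections of a line bundle $\mathscr{V}_j$ over $X$ (with right $C(X)$-action pointwise and left action twisted by $\alpha^j$), consisting of sections that vanish on $\bigcup_{k=1}^{j}\alpha^{-k}(Y)$; for $j<0$ the vanishing set is $\bigcup_{k=1}^{|j|}\alpha^{k}(Y)$. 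This follows by unfolding $\E_Y^{\otimes j}=(C_0(X\setminus Y)\E)^{\otimes j}$ using the bimodule identity $C_0(X\setminus Y)\E=\E\cdot C_0(X\setminus\alpha^{-1}(Y))$ and applying \Cref{lem.SectionsOfEy} to $E_j$.

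For the forward direction, let $a\in\mathcal{O}(\E_Y)$. Since $\Phi$ is continuous and kills elements of nonzero spectral degree, by density it suffices to check \eqref{condition1} when $a=\zeta_1\cdots\zeta_m$ with $\zeta_i\in\E_Y$; in that case $a\eta_1^*\cdots\eta_m^*\in E_0$ already equals its own expectation. I would iteratively contract the innermost pair $\zeta_{m-k+1}\eta_k^*={}_\E\langle\zeta_{m-k+1},\eta_k\rangle=\langle\eta_k,\zeta_{m-k+1}\rangle_\E\circ\alpha^{-1}$ using \Cref{prop.LeftIP}, each contraction producing a scalar vanishing on $Y$ by \Cref{lem.SectionsOfEy}. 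Commuting each such scalar leftward through the remaining $\zeta_i$'s via $\xi h=(h\circ\alpha^{-1})\xi$ costs a single $\alpha^{-1}$ per pass; after all $m$ contractions the product of the $m$ scalars has $k$-th factor vanishing on $\alpha^{m-k}(Y)$, so the total vanishes on $Y\cup\alpha(Y)\cup\cdots\cup\alpha^{m-1}(Y)=Y_m$. The dual computation using $\zeta_k^*\eta_k=\langle\zeta_k,\eta_k\rangle_\E$ and $\xi^*h=(h\circ\alpha)\xi^*$ yields \eqref{condition3}.

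For the backward direction, suppose $a$ satisfies both conditions. By \Cref{lem:cesaro}, $a=\lim_N\Psi_N(a)$, so it suffices to show each spectral component $a_j=\sum_k\Phi(a\xi_{j,k}^*)\xi_{j,k}$ lies in $\mathcal{O}(\E_Y)$. For $j=0$, $a_0=\Phi(a)\in C(X)\subset\mathcal{O}(\E_Y)$. For $j>0$, hypothesis \eqref{condition1} gives $\Phi(a\xi_{j,k}^*)\in C_0(X\setminus Y_j)$; then $\Phi(a\xi_{j,k}^*)\xi_{j,k}=\xi_{j,k}\cdot(\Phi(a\xi_{j,k}^*)\circ\alpha^j)$, which as a section of $\mathscr{V}_j$ vanishes on $\alpha^{-j}(Y_j)=\bigcup_{k=1}^{j}\alpha^{-k}(Y)$ and hence lies in $\mathcal{O}(\E_Y)\cap E_j$ by the characterisation. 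The case $j<0$ is treated symmetrically using the adjoint frame and \eqref{condition3}. The main obstacle is the characterisation of $\mathcal{O}(\E_Y)\cap E_j$ itself; although morally a $j$-fold application of \Cref{lem.SectionsOfEy}, it requires care in tracking how the restriction to $C_0(X\setminus Y)$ threads through each tensor factor with its $\alpha$-twist. Once this is in place, both directions reduce to bookkeeping with the Hilbert bimodule commutation relations.
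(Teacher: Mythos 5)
Your overall strategy is the same as the paper's: reduce to homogeneous components via the gauge action, characterise membership of $E_j$ in $\mathcal{O}(\E_Y)$ by a vanishing condition on $\bigcup_{k=1}^{j}\alpha^{-k}(Y)$ (with the mirror condition for $j<0$), and pass to general $a$ using the Ces\`aro sums of \Cref{lem:cesaro}. Your forward direction --- contracting the pairs $\zeta_{m-k+1}\eta_k^*$ into scalars vanishing on $Y$ and commuting them leftward --- is a correct reorganisation of the paper's computation, which instead peels the $C_0(X\setminus Y)$ coefficients off each factor of the degree-$m$ component; both land on a product of functions whose $k$-th factor vanishes on $\alpha^{m-k}(Y)$, hence on $Y_m$.

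The one place your argument is not yet a proof is the auxiliary characterisation of $\mathcal{O}(\E_Y)\cap E_j$, specifically the inclusion you actually need in the backward direction: that a section of $\mathscr{V}_j$ vanishing on $\bigcup_{k=1}^{j}\alpha^{-k}(Y)$ lies in $\E_Y^{\otimes j}$. ``Unfolding $(C_0(X\setminus Y)\E)^{\otimes j}$ and applying \Cref{lem.SectionsOfEy}'' only gives the easy inclusion, namely that elements of $\E_Y^{\otimes j}$ do vanish on that set; the converse requires splitting a single coefficient in $C_0(X\setminus Y_j)$ into $j$ pieces, one per tensor factor, each vanishing on $Y$ after the appropriate twist by a power of $\alpha$. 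The paper achieves this by reducing to a positive coefficient $f$ and writing, with the help of \Cref{lem.SwitchSides},
\[
\xi_{j_1}\otimes\cdots\otimes\xi_{j_m}\,(f\circ\alpha^{m}) \;=\; \bigl(f^{1/m}\,\xi_{j_1}\bigr)\otimes\cdots\otimes\bigl(f^{1/m}\circ\alpha^{m-1}\,\xi_{j_m}\bigr),
\]
where each factor lies in $\E_Y$ because $f$ vanishes on $\alpha^{i}(Y)$ for $0\le i\le m-1$. You correctly flag this as the main obstacle, but the route you propose does not by itself close it; once this root-factorisation is inserted, your backward direction goes through exactly as you describe.
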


\begin{proof}  
We first establish the proposition when restricted to the dense subalgebra $\bigoplus_{l \in \mathbb{Z}} E_l$. Let $a \in \bigoplus_{l \in \mathbb{Z}} E_l$ and assume that $a$ satisfies \eqref{condition1} and \eqref{condition3}. Then $a = \sum_{l=-N}^N  \zeta_l$ for some  $\zeta_l \in E_l$,  $-N \leq l \leq N$. If $\eta_1, \dots, \eta_m \in \E$ are non-zero then $\zeta_l \eta_1^* \cdots \eta_m^*$ has degree zero if and only $l=m > 0$. It follows that 
\begin{equation}\label{computatenexpectation}
    \Phi(a \eta_1^*\cdots \eta_m^*) =  \zeta_m \eta_1^* \cdots \eta_m^*. 
    \end{equation}

Let $\{h_j :U_j \times \mathbb{C} \to \mathscr{V}|_{U_j}\}_{j=1,\dots, k}$ be an atlas for $\mathscr{V}$, and let $\{\gamma_j\}_{j=1,\dots,k}$ be a partition of unity subordinate to the open cover $\{U_j\}_{j=1,\dots, k}$. Define $\xi_j(x) = h_j(x, \gamma_j^{1/2}(x))$ for $1 \leq j \leq k$. Note that 
\[
 \{\xi_{j_1, \dots, j_l}:= \xi_{j_1} \otimes \xi_{j_2} \otimes \cdots \otimes \xi_{j_l} \mid 1\leq j_1, \dots, j_l\leq k\}
\] 
algebraically generates $\E^{\otimes l} \cong E_l$. In $\E^{\otimes m}$ we have 
\[ 
\zeta_m = \sum_{1\leq j_1, \dots, j_m\leq k} \xi_{j_1, \dots, j_m} \langle \xi_{j_1, \dots, j_m}, \zeta_m \rangle_{\E^{\otimes m}}.
\]
For every $1\leq j_1, \dots, j_m\leq k$, let $f_{j_1, \dots, j_m}=\zeta_m\xi_{j_m}^*\cdots\xi_{j_1}^*$.  By assumption and equation \eqref{computatenexpectation}, $f_{j_1, \dots, j_m}$  vanishes on $Y_m$. Fix $j_1, \dots, j_m  \in \{1, \dots, k\}$ and let $f := f_{j_1, \dots, j_m}$. Since we may write $f$ as the linear combination of positive elements, we may assume that $f$ is positive. Then, using \Cref{lem.SwitchSides} at the first step, we have
\begin{align*}
    \xi_{j_1, \dots, j_m} \langle \xi_{j_1, \dots, j_m}, \zeta_m \rangle_{\E^{\otimes m}} &=  \xi_{j_1, \dots, j_m}  \,_{\E^{\otimes m}}\langle \zeta_m, \xi_{j_1, \dots, j_m} \rangle \circ \alpha^m
    \\
    &=\xi_{j_1} \otimes \cdots \otimes \xi_{j_m} f \circ \alpha^m\\
    &=  (f^{1/m} \, \xi_{j_1}) \otimes \cdots \otimes (f^{1/m} \circ \alpha^{m-1} \xi_{j_m}).
\end{align*}
Each $f^{1/m}\circ \alpha^{j}$, $0 \leq j \leq m-1$, vanishes on $Y$, so $\xi_{j_1, \dots, j_m} \langle \xi_{j_1, \dots, j_m}, \zeta_m \rangle_{\E^{\otimes m}} \in \mathcal{O}(\E_Y)$. Since $\zeta_m$ is the sum of elements in $\mathcal{O}(\E_Y)$, we have $\zeta_m \in \mathcal{O}(\E_Y)$. Thus $\zeta_n \in \mathcal{O}(\E_Y)$ for every $n > 0$. 

Similarly, $\zeta_n \in \mathcal{O}(\E_Y)$ for every $n < 0$. Moreover, $\Phi(a) = \zeta_0 \in C(X) \subset \mathcal{O}(\E_Y)$. It follows that $a$ is the sum of elements of $\mathcal{O}(\E_Y)$. We conclude that $a \in \mathcal{O}(\E_Y)$.

Conversely, if $a \in \bigoplus_{l \in \mathbb{Z}} E_l \cap \mathcal{O}(\E_Y)$, then there exists $N \in \mathbb{Z}_{\geq 0}$ and $\zeta_l  \in E_l \cap \mathcal{O}(\E_Y)$ such that $a = \sum_{l = -N}^N \zeta_l$. If $l = 0$ then $\zeta_l \in C(X)$. Suppose $l > 0$. Since $\zeta_l \in \mathcal{O}(\E_Y)$, there are $K_l \in \mathbb{Z}_{>0}$ and $f_{1,k}, \dots, f_{l,k}  \in C_0(X\setminus Y)$ and $\xi_{1,k}, \dots,  \xi_{l,k} \in \E$, $1 \leq k \leq K_l$ such that  
\[ \zeta_l = \sum_{k=1}^{K_l} (f_{1,k} \xi_{1,k})  (f_{2,k} \xi_{2,k})\cdots (f_{l,k} \xi_{l,k}).
\]
Thus if $\eta_1, \dots, \eta_m \in \E$ we have that 

\begin{align*}
\Phi(a \eta_1^* \cdots \eta_m^*) &= 
   \zeta_m\eta_1^* \cdots \eta_m^*\\
   &= \sum_{k=1}^{K_m} (f_{1,k} \xi_{1,k})  (f_{2,k} \xi_{2,k}) (f_{3,k}\xi_{3,k}) \cdots (f_{m,k} \xi_{m,k}) \eta_1^* \cdots \eta_m^* \\
   &= \sum_{k=1}^{K_m}  f_{1,k}\, f_{2,k}  \circ \alpha^{-1} \, \xi_{1,k} \xi_{2,k} (f_{3,k}\xi_{3,k}) \cdots (f_{m,k}\xi_{m,k}) \eta_1^* \cdots \eta_m^*\\
   &= \sum_{k=1}^{K_m}  f_{1,k} \, f_{2,k}  \circ \alpha^{-1} \cdots f_{m,k} \circ \alpha^{1-m} \, \xi_{1,k} \xi_{2,k} \xi_{3,k}\cdots \xi_{m,k}\eta_1^* \cdots \eta_m^*.\\
\end{align*}

Since  $f_{1,k} f_{2,k} \, \circ \alpha^{-1} \cdots f_{m,k} \circ \alpha^{1-m} \in C_0(X \setminus Y_m)$  for every $1 \leq k \leq K_m$, while $\xi_{1,k} \xi_{2,k} \xi_{3,k}\cdots \xi_{m,k}\eta_1^* \cdots \eta_m^* \in C(X)$ for every $1 \leq k \leq K_m$, it follows that
\[ \Phi(a \eta_1^* \cdots \eta_m^*)  \in C_0(X \setminus Y_m).\]

Similar calculations show that $\Phi(a \xi_1 \dots \xi_m) \in C_0(X \setminus Y_{-m})$.

Now consider any $a \in \mathcal{O}(\E)$.  Suppose first that $a$ satisfies conditions \eqref{condition1} and \eqref{condition3}.  Let $a_N := \Psi_N(a)$  for every $N >0$, where $\Psi_N$ is defined in \Cref{lem:cesaro}. Then $a_N \in \bigoplus_{l \in \mathbb{Z}} E_l$; moreover, using the definition of $\Psi_N$ and the fact that $\Phi$ is a conditional expectation, it is easy to see that conditions \eqref{condition1} and \eqref{condition3} hold with $a_N$ instead of $a$ for all $N > 0$. By the previous case, this implies that $a_N \in \mathcal{O}(\E_Y)$ for all $N > 0$.  By \Cref{lem:cesaro}, $a_N$ converges to $a$ as $N \to \infty$, hence $a \in \mathcal{O}(\E_Y)$.

Conversely, suppose $a \in \mathcal{O}(\E_Y)$.  Since $\E_Y$ is a submodule of $\E$, it follows from the construction of $\mathcal{O}(\E_Y)$ that $\mathcal{O}(\E_Y) \cap \bigoplus_{l \in \mathbb{Z}} E_l$ is dense in $\mathcal{O}(\E_Y)$, so there exists a sequence $(a_N)_{N = 1}^\infty \subset \mathcal{O}(\E_Y) \cap \bigoplus_{l \in \mathbb{Z}} E_l$ such that $a_N \to a$.  By the previous case,  for any fixed $\eta_1, \dots, \eta_m \in \E$ we have $\Phi(a_N \eta_1^* \cdots \eta_m^*) \in C_0(X \setminus Y_m)$ 
and $\Phi(a_N \eta_1 \cdots \eta_m)  \in C_0(X \setminus Y_{-m})$ for all $N > 0$.  But then $a_N \to a$ implies that $\Phi(a \eta_1^* \cdots \eta_m^*) \in C_0(X \setminus Y_m)$ 
and $\Phi(a \eta_1 \cdots \eta_m)  \in C_0(X \setminus Y_{-m})$ as well, concluding the proof.
\end{proof}

\subsection{Centrally large subalgebras}

In an attempt to generalise the relationship between $C(X)\rtimes_\alpha \mathbb{Z}$ and the orbit-breaking subalgebra $\mathrm{C}^*
(C(X), C_0(X \setminus Y)u)$, Phillips introduced the abstract notion of a so-called \emph{large subalgebra} \cite{PhLarge}. Before presenting the definition, we require the notion of \emph{Cuntz subequivalence} for positive elements in a $\mathrm{C}^*$-algebra.  

Given a $\mathrm{C}^*$-algebra $A$ and two positive elements $a, b \in (\mathcal{K} \otimes A)_+$, we say $a$ is \emph{Cuntz subequivalent} to $b$, written $a \precsim b$, if there is a sequence $(r_n)_{n \in \mathbb{N}} \subset \mathcal{K} \otimes A$ such that $\| r_n b r_n^* - a \| \to 0$ as $n \to \infty$.  We say that $a$ is \emph{Cuntz equivalent} to $b$ if $a \precsim b$ and $b \precsim a$. We will routinely use the following facts about the Cuntz relation in a $\mathrm{C}^*$-algebra $A$. For (1) and (3) see \cite{Ror:uhfII}, while for (2) and (4) see Lemma 2.5(ii) and the discussion after Definition 2.3 of \cite{KirRor:pi}. See  Lemma 2.8(ii), Lemma 2.8(iii), and Lemma 2.9 in \cite{KirRor:pi} for (5), (6), and (7), respectively. 

\begin{lemma} \label{lem.CuntzResults}  Let $A$ be a $\mathrm{C}^*$-algebra.
\begin{enumerate}
\item If $A = C(X)$ for $X$ a compact Hausdorff space, then $f \precsim g$ if and only if the open support of $f$ is contained in the open support of $g$. 
\item For any $a, b \in A_+$, if $a \leq b$ then $a \precsim b$.
\item For any $\epsilon > 0$ and any $a, b \in A_+$  satisfying $\|a - b \| < \epsilon$, we have $(a - \epsilon)_+ \precsim b$.
\item For any $a \in A$, $a^*a \sim aa^*$.
\item For any $a,b \in A_+$, $a+b\precsim a\oplus b$.
\item For orthogonal elements $a,b \in A_+$, $a+b\sim a\oplus b$.
\item If $a_1,a_2, b_1, b_2 \in A_+$ satisfy $a_1\precsim a_2$ and $b_1\precsim b_2$, then $a_1\oplus b_1\precsim a_2\oplus b_2$.
\end{enumerate}
\end{lemma}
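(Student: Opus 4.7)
The plan is to establish each part using standard techniques from the theory of Cuntz comparison, namely functional calculus combined with explicit choices of witnesses in the definition of $\precsim$.

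I would handle items (2), (4), and (7) first, as they admit short direct proofs. For (2), I would use the sequence $r_n = a^{1/2}(b + 1/n)^{-1/2}$, verifying that $r_n b r_n^* \to a$ using $a \leq b$. For (4), I would observe that $(a^*a)^{1/n} a^*$ serves as a witness that $a^*a \precsim aa^*$, with the symmetric choice giving the converse. For (7), given witnessing sequences $r_n$ for $a_1 \precsim a_2$ and $s_n$ for $b_1 \precsim b_2$, I would take the block-diagonal sequence $r_n \oplus s_n$ in $M_2(\mathcal{K} \otimes A)$.

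Next I would address (5) and (6) together. The key observation is that compressing $a \oplus b$ by the $1 \times 2$ ``all-ones row vector'' recovers $a + b$; after careful approximation of this row vector by elements of $A$ (using an approximate identity to circumvent unitality issues), this yields (5). For (6), I would exploit the fact that orthogonality $ab = 0$ lets $a + b$ decompose naturally under its own continuous functional calculus, producing witnesses for $a \oplus b \precsim a + b$ and completing the Cuntz equivalence with the aid of (5). For (1), the forward direction would follow from the observation that pointwise limits of $r_n g r_n^*$ necessarily vanish wherever $g$ vanishes; for the reverse direction I would construct $r_n \in C(X)$ explicitly as $f^{1/2}$ times a truncated inverse square root of $g$, supported where $g \geq 1/n$.

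The main obstacle will be item (3), R{\o}rdam's cut-down lemma. My approach would be to express $(a - \epsilon)_+ = d b d^*$ for an element $d$ built via continuous functional calculus from $a$, using the slack $\|a - b\| < \epsilon$ precisely to absorb the $-\epsilon$ cutoff. The calibration between the cutoff size and the norm gap is delicate, and a clean treatment requires Kirchberg--R{\o}rdam's refined manipulations; this is the deepest of the seven facts, and in practice (as the authors do here) the cleanest option is to invoke the reference rather than reproduce the argument.
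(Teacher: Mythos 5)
The paper offers no proof of this lemma at all: the authors simply cite the literature (R{\o}rdam's \emph{UHF II} paper for (1) and (3), and Kirchberg--R{\o}rdam for (2) and (4)--(7)), so any genuine proof sketch is by definition a different route. Your outlines for (1), (2), (5), (6) and (7) are essentially the standard arguments from those references and are correct as described: the block-diagonal witness for (7); the row-vector compression with an approximate identity for (5) and the orthogonality-based converse for (6); the computation $r_n b r_n^* = a^{1/2}\,b(b+1/n)^{-1}\,a^{1/2} \to a$ for (2), which implicitly uses that $a \leq b$ forces $a^{1/2}$ into $\overline{bA}$ (a point worth making explicit); and, for the reverse direction of (1), the compactness argument placing $\{f \geq \epsilon\}$ inside $\{g \geq \delta\}$ so that the truncated inverse square root of $g$ works uniformly. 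Deferring (3) to the reference is exactly what the authors do.

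One concrete slip: your witness for (4) fails as written. With $r_n = (a^*a)^{1/n}a^*$ one computes $r_n(aa^*)r_n^* = (a^*a)^{1/n}(a^*a)^2(a^*a)^{1/n} \to (a^*a)^2$ in norm, not $a^*a$. Either compose with the (true, but separate) equivalence $a^*a \sim (a^*a)^2$, or use the standard witness $r_n = (a^*a + 1/n)^{-1/2}a^*$, for which $r_n(aa^*)r_n^* = (a^*a)^2(a^*a+1/n)^{-1} \to a^*a$ uniformly by functional calculus. With that repair, your proposal is a complete and correct elaboration of facts that the paper only quotes.
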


The definition of a large subalgebra is due to Phillips \cite{PhLarge}.  We will also require the refined definition of a \emph{centrally large subalgebra}, due to Archey and Phillips \cite{ArchPhil:SR1}.

\begin{definition}[{\cite[Definition 4.1]{PhLarge}, \cite[Definition 2.1]{ArchPhil:SR1}}] \label{defn.largesubalgebra}
Let $A$ be an infinite dimensional simple unital $\mathrm{C}^*$-algebra. A unital $\mathrm{C}^*$-subalgebra $B \subset A$ is $\emph{large}$ if, for every $m \in \mathbb{Z}_{>0}$, $a_1, \dots, a_m \in A$, $ \epsilon > 0$, $x \in A_+$ with $\| x \| = 1$ and every $y \in B_+ \setminus \{ 0 \}$, there are $c_1, c_2, \dots, c_m \in A$ and $g \in B$ such that 
\begin{enumerate}
\item \label{item.pvebound} $0 \leq g \leq 1$;
\item \label{item.closetoas} $\| c_j - a_j\| < \epsilon$,
\item \label{item.cutdown} $(1 - g) c_j \in B,$
\item \label{item.gleqCub} $g \precsim_B y$ and $g \precsim_A x$,
\item \label{defn.largesubalgebra.largeg} $\| (1 - g )x(1-g) \| > 1 - \epsilon$.
\end{enumerate}
If, in addition, $g$ can be chosen so that
\begin{enumerate}[resume]
    \item $\| g a_j - a_j g \| < \epsilon,$
    \end{enumerate} 
then we say that $B$ is \emph{centrally large}. We say that $B$ is \emph{stably large} in $A$ if $M_n(B)$ is large in $M_n(A)$ for every $n \in \mathbb{Z}_{>0}$.
\end{definition}

\begin{lemma} \label{thm.LocalUnits} Let $X$ be a compact metric space, $\alpha : X \to X$ a homeomorphism, $\mathscr{V} = [V, p, X]$ a line bundle over $X$ and $\E = \Gamma(\mathscr{V}, \alpha)$. Let  $U \subset X$ be an open subset such that $\mathscr{V}|_U$ is trivial, and $ F \subset C(X)$ a finite subset with $\supp(f) \subset U$ for every $f \in F$. Then there exists $\xi$ such that
\begin{equation*}
\xi \xi^* (f \circ \alpha^{-1}) = f \circ \alpha^{-1} \text{ and } \xi^* \xi f = f,
\end{equation*}
for every $f \in F$.
\end{lemma}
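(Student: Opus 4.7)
The plan is to reduce both identities to a single condition on $\langle \xi, \xi\rangle_\E$ using the bimodule structure, and then to construct $\xi$ explicitly using a trivialising chart on $U$.

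First I would observe that in $\mathcal{O}(\E)$ we have $\xi^*\xi = \langle \xi, \xi\rangle_\E$ directly from the defining relations of a representation, while $\xi\xi^* = \,_\E\langle \xi, \xi\rangle$ from the covariant relation for Hilbert bimodules. By \Cref{prop.LeftIP}, $\,_\E\langle \xi, \xi\rangle = \langle \xi, \xi\rangle_\E \circ \alpha^{-1}$. Setting $K := \bigcup_{f \in F} \supp(f) \subset U$, which is compact, the two desired identities both follow from the single requirement
\[
\langle \xi, \xi\rangle_\E(x) = 1 \quad \text{for every } x \in K.
\]
Indeed, $\xi^*\xi f = f$ holds because $\supp(f) \subset K$, and for $x \in \supp(f \circ \alpha^{-1}) = \alpha(\supp(f)) \subset \alpha(K)$ we have $\alpha^{-1}(x) \in K$, so $\,_\E\langle \xi, \xi\rangle(x) = \langle \xi, \xi\rangle_\E(\alpha^{-1}(x)) = 1$, giving $\xi\xi^*(f\circ \alpha^{-1}) = f\circ\alpha^{-1}$.

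Next I would construct such a $\xi$. Since $\mathscr{V}|_U$ is trivial there is a chart $h : U \times \mathbb{C} \to \mathscr{V}|_U$. By Urysohn's lemma, as $K$ is compact and contained in the open set $U$, there exists $s \in C_0(U)$ with $0 \leq s \leq 1$ and $s \equiv 1$ on $K$. Define
\[
\xi(x) := \begin{cases} h(x, s(x)) & x \in U, \\ 0 & x \in X \setminus U. \end{cases}
\]
Since $s$ vanishes at the boundary of $U$ and $h$ carries $(x,0)$ to the zero of the fibre $\mathscr{V}_x$, this is a continuous section, i.e.\ $\xi \in \Gamma(\mathscr{V}) = \E$.

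Finally I would verify the inner product. Choose an atlas for $\mathscr{V}$ containing $h$ as one of its charts, say $h = h_1$ on $U_1 = U$, together with a subordinate partition of unity $\{\gamma_j\}$. Because $\mathscr{V}$ is a line bundle, every transition function $g_{j,1}$ takes values in $U(1)$, so for $x \in U_j \cap U$ we have $|h_j^{-1}(\xi(x))|^2 = |g_{j,1}(x) s(x)|^2 = |s(x)|^2$, while $\xi(x) = 0$ off $U$. Plugging into \eqref{innerproduct},
\[
\langle \xi, \xi\rangle_\E(x) = \sum_{j} \gamma_j(x) |s(x)|^2 = |s(x)|^2,
\]
for every $x \in X$. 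In particular $\langle \xi, \xi\rangle_\E \equiv 1$ on $K$, completing the proof. No step really qualifies as an obstacle; the only thing to watch is that one uses the line bundle hypothesis to ensure $|g_{j,1}| \equiv 1$, which is exactly what collapses the inner product to $|s|^2$.
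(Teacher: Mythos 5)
Your proof is correct and follows essentially the same route as the paper's: trivialise $\mathscr{V}|_U$, build a bump function equal to $1$ on the union of supports and supported in $U$, define $\xi$ via the chart, and use $\xi^*\xi = \langle\xi,\xi\rangle_\E$ together with $\xi\xi^* = \,_\E\langle\xi,\xi\rangle = \langle\xi,\xi\rangle_\E\circ\alpha^{-1}$ from \Cref{prop.LeftIP}. The only cosmetic difference is that the paper feeds $\gamma^{1/2}$ into the chart so that $\langle\xi,\xi\rangle_\E = \gamma$, whereas you feed in $s$ and get $|s|^2$; both equal $1$ on the relevant set, so the argument is unaffected.
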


\begin{proof}
Since $\mathscr{V}|_U$ is trivial, there exists a chart $h : U \times \mathbb{C} \to \mathscr{V}|_U$. Let $W = \cup_{f \in F} \supp(f) \subset U$. Since $W$ is closed, there is a function  $\gamma \in C(X)$ satisfying $\gamma|_W = 1$ and $\gamma|_{X \setminus U} = 0$.  Set $\xi(x) := h(x, \gamma^{1/2}(x))$. Then $\xi \xi^* (f \circ \alpha^{-1}) = (\gamma\circ \alpha^{-1})(f \circ \alpha^{-1}) = f \circ \alpha^{-1} $ and $\xi^*\xi f = \gamma f = f$, for every $f \in F$.
\end{proof}

\begin{lemma} \label{lem.CuCompareFnTranslates} Let $X$ be a compact metric space, $\alpha : X \to X$ a homeomorphism, $\mathscr{V} = [V, p, X]$ a line bundle, and $\E = \Gamma(\mathscr{V}, \alpha)$.  Suppose that $Y \subset X$ is a non-empty closed subset. Let $\E_Y := C_0(X \setminus Y) \E$. Let $N \in \mathbb{Z}_{>0}$ and let $U \subset X$ be a non-empty open subset.
\begin{enumerate}
    \item Suppose that $\mathscr{V}|_{\alpha^n(U)}$ is trivial for every $0 \leq n \leq N-1$ 
    .  If $f \geq 0$ with $\supp f \subset U$ satisfies $\restrict{f}{\cup_{n=1}^{N} \alpha^{-n}(Y)} = 0$ then $f \sim_{\cpalgOfEY} f \circ \alpha^{-N}$.
    \item Suppose that $\mathscr{V}|_{\alpha^{-n}(U)}$ is trivial for every $1 \leq n \leq N$. 
    If $f \geq 0$ with $\supp f \subset U$ satisfies $\restrict{f}{\cup_{n=0}^{N-1} \alpha^n(Y)} = 0$  then $f \sim_{\cpalgOfEY} f \circ \alpha^{N}$.
\end{enumerate} 
\end{lemma}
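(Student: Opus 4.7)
The plan is to induct on $N$, doing part (1) first and then observing that part (2) follows by a symmetric argument. The base case $N=1$ contains all the content. Given $\epsilon > 0$, the sets $\{f \geq \epsilon\}$ and $\alpha^{-1}(Y)$ are disjoint closed subsets of $X$ (because $f$ vanishes on $\alpha^{-1}(Y)$), with $\{f \geq \epsilon\}$ sitting inside the open set $U \setminus \alpha^{-1}(Y)$. Urysohn's lemma supplies $\gamma \in C(X)$ with $0 \leq \gamma \leq 1$, $\gamma \equiv 1$ on $\{f \geq \epsilon\}$, and $\supp \gamma \subset U \setminus \alpha^{-1}(Y)$. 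Using the given trivialisation $h : U \times \mathbb{C} \to \mathscr{V}|_U$, I define $\xi(x) := h(x, \gamma(x)^{1/2})$ on $U$ and extend by zero. By \Cref{lem.SectionsOfEy}, $\xi \in \E_Y$ because $\gamma$ vanishes on $\alpha^{-1}(Y)$, and \Cref{prop.LeftIP} yields $\xi^* \xi = \gamma$ and $\xi \xi^* = \gamma \circ \alpha^{-1}$ inside $\cpalgOfEY$.

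Next, consider $a := f^{1/2} \xi^* \in \cpalgOfEY$. Directly $a a^* = f^{1/2} \gamma f^{1/2} = \gamma f$, while a short computation using $\xi f \xi^* = (\langle \xi, \xi\rangle_\E \, f) \circ \alpha^{-1}$ (which follows from identifying $\xi f \xi^*$ with the compact operator $\theta_{\xi f, \xi}$ and invoking \Cref{prop.LeftIP}) gives $a^* a = (\gamma f) \circ \alpha^{-1}$. Hence $\gamma f \sim_{\cpalgOfEY} (\gamma f) \circ \alpha^{-1}$. Since $\gamma \equiv 1$ on $\{f \geq \epsilon\}$ one has $(f - \epsilon)_+ \leq \gamma f \leq f$, with the analogous inequalities for $f \circ \alpha^{-1}$, so \Cref{lem.CuntzResults} gives $(f - \epsilon)_+ \precsim_{\cpalgOfEY} f \circ \alpha^{-1}$ and $(f \circ \alpha^{-1} - \epsilon)_+ \precsim_{\cpalgOfEY} f$. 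Letting $\epsilon \to 0$ yields $f \sim_{\cpalgOfEY} f \circ \alpha^{-1}$, finishing the base case. For the inductive step, I apply the $N=1$ conclusion to $\tilde f := f \circ \alpha^{-(N-1)}$ on $\tilde U := \alpha^{N-1}(U)$—triviality of $\mathscr{V}|_{\tilde U}$ is given and $\tilde f$ vanishes on $\alpha^{-1}(Y)$ because $f$ vanishes on $\alpha^{-N}(Y)$—obtaining $f \circ \alpha^{-(N-1)} \sim_{\cpalgOfEY} f \circ \alpha^{-N}$; the inductive hypothesis then delivers $f \sim_{\cpalgOfEY} f \circ \alpha^{-(N-1)}$, and transitivity closes the induction. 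Part (2) is proved identically, with $\gamma$ now supported in $U \setminus Y$ and $\xi$ built from the trivialisation of $\mathscr{V}|_{\alpha^{-1}(U)}$ using $(\gamma \circ \alpha)^{1/2}$; the witness becomes $a := f^{1/2} \xi$, giving $a a^* = \gamma f$ and $a^* a = (\gamma f) \circ \alpha$, from which the same $(f - \epsilon)_+$ argument and induction produce $f \sim_{\cpalgOfEY} f \circ \alpha^N$.

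The main obstacle, and the reason for the hypotheses, is arranging that the witnessing element $\xi$ lies in $\E_Y$ rather than merely in $\E$: by \Cref{lem.SectionsOfEy} this requires $\xi$ to vanish on \emph{all} of $\alpha^{-1}(Y)$, not only where $f$ is nonzero, so $\supp \gamma$ must be separated from $\alpha^{-1}(Y)$ by an open-set separation. This is why one cannot simply take $\gamma \equiv 1$ on $\supp f$ and must pass to the cutoff $(f - \epsilon)_+$ and a limit. Otherwise, the identity $\xi f \xi^* = (\langle \xi, \xi\rangle_\E \, f) \circ \alpha^{-1}$ is precisely the orbit-breaking analogue of the crossed-product identity $ufu^* = f \circ \alpha^{-1}$, and the general Cuntz-equivalence $a^*a \sim a a^*$ substitutes for the unitary conjugation that is no longer available.
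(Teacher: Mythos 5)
Your proof is correct, but it implements the key idea differently from the paper. Both arguments ultimately rest on $a^*a \sim aa^*$ for an element $a$ of the form $f^{1/2}$ times sections, but the paper does it in a single step for general $N$: it invokes \Cref{thm.LocalUnits} to produce sections $\xi_1,\dots,\xi_N \in \E$ acting as \emph{exact} local units on the translates of $f$ (so that $aa^* = f$ and $a^*a = f\circ\alpha^{-N}$ on the nose, via a telescoping computation with $a = f^{1/2}\xi_1^*\cdots\xi_N^*$), and then verifies $a \in \cpalgOfEY$ not by putting the $\xi_n$ themselves in $\E_Y$ but by applying the conditional-expectation membership criterion of \Cref{prop:CondExPict} to the product, using only that $f$ vanishes on $\bigcup_{n=1}^N\alpha^{-n}(Y)$. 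You instead insist that the witnessing section lie in $\E_Y$ (via \Cref{lem.SectionsOfEy}), which — as you correctly identify — forces $\supp\gamma$ to be separated from $\alpha^{-1}(Y)$ and hence only yields the exact equivalence for the cut-down $\gamma f$; you then recover $f \sim f\circ\alpha^{-1}$ by the standard fact that $(f-\epsilon)_+\precsim b$ for all $\epsilon>0$ implies $f\precsim b$ (true and elementary, though not among the items listed in \Cref{lem.CuntzResults}), and you assemble general $N$ by induction and transitivity. Your route has the advantage of being independent of \Cref{prop:CondExPict}, at the cost of the $\epsilon$-cutoff, the limiting step, and the induction; the paper's route is exact and one-shot but leans on the membership criterion. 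All the individual steps you use — $\xi^*\xi=\langle\xi,\xi\rangle_\E$, $\xi\eta^*={}_\E\langle\xi,\eta\rangle=\langle\eta,\xi\rangle_\E\circ\alpha^{-1}$ from \Cref{prop.LeftIP}, and the Urysohn separation — check out, as does the symmetric treatment of part (2).
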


\begin{proof}
We show (1). The proof that (2) holds is similar and left to the reader. By \Cref{thm.LocalUnits}, there are $\xi_1, \dots, \xi_N \in \E$ such that $\xi_n^* \xi_n (f \circ \alpha^{-n+1})= f \circ \alpha^{-n+1}$  and $\xi_n \xi_n^* (f \circ \alpha^{-n}) = f \circ \alpha^{-n}$, for any $1\leq n \leq N$.
Let $a =f^{1/2} \xi_1^* \dots \xi_N^*$. Since $f$ vanishes on $\bigcup_{n=1}^{N} \alpha^{-n}(Y)$ by \Cref{prop:CondExPict}, we have $a \in \cpalgOfEY$. Moreover
\begin{align*}
    aa^* &= f^{1/2} \xi_1^*\cdots \xi_N^*\xi_N \cdots \xi_1 f^{1/2} \\
    &=\xi_1^*\cdots \xi_{N-1}^* \xi_{N-1} \cdots \xi_1 (\xi_N^*\xi_N) \circ \alpha^{N-1} f \\
    &= \xi_1^*\cdots \xi_{N-1}^* \xi_{N-1} \cdots \xi_1 (\xi_N^*\xi_N (f \circ \alpha^{-N+1})) \circ \alpha^{N-1}\\
     &= \xi_1^*\cdots \xi_{N-1}^* \xi_{N-1} \cdots \xi_1 f\\
     &= \dots = \xi_1^*\xi_1 f = f.\\
     \end{align*}
while 
\begin{align*}
    a^*a &=\xi_N \cdots \xi_1 f \xi_1^*\cdots \xi_N^* \\
    &= \xi_N\cdots \xi_2\xi_1\xi_1^* \xi_2^*\cdots \xi_N^* f\circ \alpha^{-N} \\
    &=\xi_N\cdots \xi_2\xi_2^*\cdots \xi_N^* (\xi_1\xi_1^*)\circ\alpha^{-N+1} f\circ \alpha^{-N} \\
     &=\xi_N\cdots \xi_2\xi_2^*\cdots \xi_N^* ((\xi_1\xi_1^*)f\circ\alpha^{-1})\circ \alpha^{-N+1} \\
     &=\xi_N\cdots \xi_2\xi_2^*\cdots \xi_N^* f\circ\alpha^{-N} \\
     &= \dots = \xi_N\xi_N^* f\circ\alpha^{-N}  = f\circ\alpha^{-N}.
\end{align*}
Thus $f \sim_{\mathcal{O}(\E_Y) } f\circ \alpha^{-N}$.
\end{proof}

We say that a non-empty closed subset $Y \subset X$ \emph{meets every $\alpha$-orbit at most once} if $\alpha^n(Y) \cap Y = \emptyset$ for every $n \in \mathbb{Z} \setminus \{0\}$.

\begin{lemma} \label{thm.CuntzConstruction}
Let $X$ be an infinite compact metric space, $\alpha : X\to X$ a minimal homeomorphism and $\mathscr{V} = [V, p, X]$ a line bundle.  Let $Y \subset X$ be a non-empty closed subset meeting each $\alpha$-orbit at most once and such that for every $N \in \Z_{\geq 0}$ there exists an open set $W_N \supset Y$ for which $\mathscr{V}|_{\alpha^n(W_N)}$ is trivial whenever $-N \leq n \leq N$.  Let $\E := \Gamma(\mathscr{V}, \alpha)$ and $\E_Y := C_0(X \setminus Y) \E$. 

Let $U \subset X$ be a non-empty open subset and $m \in \mathbb{Z}$. Then there exist positive functions $f, g \in C(X)$ such that
\begin{align*}
& \supp(g) \subset U, \quad  f \precsim_{\mathcal{O}(\E_Y)} g,  \quad f|_{\alpha^m(Y)} = 1,  \text{ and } \quad 0 \leq f \leq 1.
\end{align*}
\end{lemma}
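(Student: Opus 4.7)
The strategy is: cover $\alpha^m(Y)$ by finitely many open pieces each of which can be translated into $U$ by a suitable power of $\alpha$, Cuntz-equate each piece with its translate using Lemma~\ref{lem.CuCompareFnTranslates}, and then assemble via a partition of unity. I carry out the case $m \geq 0$; the case $m < 0$ is symmetric, using Lemma~\ref{lem.CuCompareFnTranslates}(2) and backward minimality in place of (1) and forward minimality.

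Since every forward orbit of a minimal homeomorphism of a compact metric space is dense, $\{\alpha^{-n}(U)\}_{n \geq 1}$ is an open cover of $X$, and by compactness of $\alpha^m(Y)$ there exists $N \in \mathbb{Z}_{>0}$ with $\alpha^m(Y) \subset \bigcup_{n=1}^N \alpha^{-n}(U)$. The closed sets $\{\alpha^k(Y) : |k| \leq N+m\}$ are pairwise disjoint since $Y$ meets each orbit at most once, so by normality of $X$ together with the standing hypothesis on $\mathscr{V}$, I can select an open neighborhood $W$ of $Y$ satisfying
\begin{enumerate}[label=(\roman*)]
\item $\mathscr{V}|_{\alpha^l(W)}$ is trivial for $|l| \leq N+m$, and
\item $W \cap \alpha^k(W) = \emptyset$ for $1 \leq |k| \leq N+m$.
\end{enumerate}

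Put $A_n := \alpha^{-n}(U) \cap \alpha^m(W)$ for $1 \leq n \leq N$. These form an open cover of $\alpha^m(Y)$, and I fix a partition of unity $\{f_n\}_{n=1}^N$ on $X$ with $\supp f_n \subset A_n$, $\sum_n f_n \leq 1$, and $\sum_n f_n = 1$ on $\alpha^m(Y)$, then set $f := \sum_n f_n$. For each $n$, the hypotheses of Lemma~\ref{lem.CuCompareFnTranslates}(1) hold with open set $A_n$ and integer $n$: triviality is immediate from (i); and any point of $A_n \subset \alpha^m(W)$, written as $\alpha^m(w)$ with $w \in W$, lies in $\alpha^{-k}(Y)$ iff $w \in \alpha^{-k-m}(Y) \subset \alpha^{-k-m}(W)$, which is ruled out by (ii) for $1 \leq k \leq n$. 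Thus $f_n \sim_{\mathcal{O}(\E_Y)} f_n' := f_n \circ \alpha^{-n}$, with $\supp f_n' \subset \alpha^n(A_n) \subset U$.

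The crucial observation is that the supports of the $f_n'$ are pairwise disjoint: for $n \neq n'$ in $[1,N]$,
\[
\supp f_n' \cap \supp f_{n'}' \subset \alpha^{n+m}(W) \cap \alpha^{n'+m}(W) = \alpha^{n+m}\bigl(W \cap \alpha^{n'-n}(W)\bigr) = \emptyset
\]
by~(ii). Setting $g := \sum_n f_n'$, we have $\supp g \subset U$; orthogonality gives $g \sim \bigoplus_n f_n'$, and combining with $\sum_n f_n \precsim \bigoplus_n f_n$ and $\bigoplus_n f_n \precsim \bigoplus_n f_n'$ via Lemma~\ref{lem.CuntzResults} yields $f \precsim_{\mathcal{O}(\E_Y)} g$, as required. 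The main obstacle is arranging that the translated windows do not overlap in $U$; this is overcome by the single thickening $W$ of $Y$ in~(ii), which forces $\alpha^{n+m}(W)$ to be pairwise disjoint across $n \in [1,N]$, so that the $\supp f_n'$ inherit disjointness automatically.
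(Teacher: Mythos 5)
Your proof is correct, and while it follows the same overall strategy as the paper's (decompose, translate each piece into $U$ by a power of $\alpha$ via Lemma~\ref{lem.CuCompareFnTranslates}, then reassemble using orthogonality and Lemma~\ref{lem.CuntzResults}), the implementation of the covering and disjointness step is genuinely different. The paper invokes \cite[Lemma 7.4]{PhLarge} to produce closed sets $Y_1,\dots,Y_l$ covering $Y$ together with powers $n_j$ for which the images $\alpha^{n_j}(Y_j)$ are \emph{already} pairwise disjoint inside $U$, and then secures orthogonality of the translated functions by housing them in disjoint open neighbourhoods $V_j \subset U$ of these images; it also first treats $m=0$ and then applies Lemma~\ref{lem.CuCompareFnTranslates} once more to pass to general $m$. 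You instead cover $\alpha^m(Y)$ directly by the preimages $\alpha^{-n}(U)$ (density of forward orbits plus compactness), handle all $m \geq 0$ uniformly, and obtain orthogonality of the $f_n \circ \alpha^{-n}$ not inside $U$ but from the pairwise disjointness of the iterates $\alpha^{n+m}(W)$ of a single tube $W$ around $Y$ --- which exists precisely because $Y$ meets each orbit at most once. Your route is more self-contained (no appeal to Phillips' covering lemma) at the cost of carrying the window $N+m$ through the choice of $W$; the paper's route externalises the combinatorics to a quotable lemma. All the hypothesis checks in your argument go through: the triviality of $\mathscr{V}|_{\alpha^l(A_n)}$ follows from $\alpha^l(A_n) \subset \alpha^{l+m}(W)$ with $0 \le l+m \le N+m-1$, and the vanishing of $f_n$ on $\bigcup_{k=1}^n \alpha^{-k}(Y)$ follows from condition (ii) since $1 \le k+m \le N+m$. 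The $m<0$ case is only sketched, as in the paper; the one extra point to verify there is that $\supp f_n \subset \alpha^m(W)$ is disjoint from $Y$ itself (needed because Lemma~\ref{lem.CuCompareFnTranslates}(2) requires vanishing on $\bigcup_{k=0}^{n-1}\alpha^k(Y)$, including $k=0$), which again follows from (ii) since $m \neq 0$.
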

\begin{proof}
By \cite[Lemma 7.4]{PhLarge}, there exist an $l \in \Z_{>0}$, compact subsets $Y_1, \dots, Y_l \subset X$ and $n_1 < n_2 < \dots < n_l \in \Z_{>0}$ such that
\[ Y \subset \bigcup_{j=1}^l Y_j, \quad \bigcup_{j=1}^l \alpha^{n_j}(Y_j) \subset U, \quad  \text{and} \quad \alpha^{n_j}(Y_j) \cap \alpha^{n_k}(Y_k) = \emptyset, \quad 1 \leq j \neq k \leq l.\]
Assume $m = 0$. Let $W \supset Y$ be an open set such that $\mathscr{V}|_{\setWn{n}}$ is trivial for $0 \leq n \leq n_l-1$.  Choose open subsets $V_1, \dots, V_l \subset U$ such that $\alpha^{n_j}(Y_j) \subset V_j$ and $V_j \cap V_k = \emptyset$ for $1 \leq j \neq k \leq l$.   Define
\[ W_j := \alpha^{-n_j}(V_j) \cap \left( X  \setminus \bigcup_{n=1}^{n_l} \alpha^{-n}(Y) \right) \cap W. \]
Observe that $W_1, \dots, W_l$ are open and their union covers $Y$.  Thus there exist functions $0 \leq f_1, \dots, f_l \leq 1$ with $\supp(f_j) \subset W_j$, and such that $f := \sum_{j=1}^l f_j $ satisfies $f|_Y = 1$, $0 \leq f \leq 1$ and $\supp(f) \subset W \setminus \bigcup_{n=1}^{n_l} \alpha^{-n}(Y)$.  By \Cref{lem.CuCompareFnTranslates} (1), for each $j$, we have 
\[ f_j \sim_{\mathcal{O}(\E_Y)} f_j \circ \alpha^{-n_j}.\]
Define $g := \sum_{j = 1}^l f_j \circ \alpha^{-n_j}$.  Note that $\supp(f_j \circ \alpha^{-n_j}) \subset V_j$, and so $\supp(g) \subseteq U$ and the functions $f_j \circ \alpha^{-n_j}$ are pairwise orthogonal.  Hence
\[ f = \sum_{j=1}^l f_j \precsim_{\mathcal{O}(\E_Y)}  \bigoplus_{j=1}^l f_j \sim_{\mathcal{O}(\E_Y)}  \bigoplus_{j=1}^l  f_j \circ \alpha^{-n_j} \sim_{C(X)} g.\]
Thus the lemma holds for $m=0$.

Now suppose that $m > 0$. Let $K := \max\{m,n_l\}$, and choose $W \supset Y$ to be such that $\mathscr{V}|_{\alpha^n(W)}$ is trivial for every $0 \leq n \leq K-1$  and  $\overline{W} \cap \bigcup_{n=1}^K \alpha^{-n}(Y) = \emptyset$. As in the case $m=0$, we construct $f_0$ and $g$ such that $0 \leq f_0 \leq 1$, $f_0|_{Y} = 1$, $\supp(f_0) \subset W$,  $\supp(g) \subset U$ and $f_0 \precsim_{\mathcal{O}(\E_Y)} g$. Note that $\supp(f_0) \cap \bigcup_{n=1}^{n_l} \alpha^{-n}(Y) = \emptyset$. Let $f := f_0 \circ \alpha^{-m}$ and note that $\supp(f) \subseteq \setWn{m}$, and $\restrict{f}{\setYn{m}} = 1$. Moreover, by \Cref{lem.CuCompareFnTranslates} (1), $f_0 \sim_{\mathcal{O}(\E_Y)} f_0 \circ \alpha^{-m}$.  Hence
\[ f = f_0 \circ \alpha^{-m} \sim_{\cpalgOfEY} f_0 \precsim_{\cpalgOfEY} g,\]
which shows the lemma holds for $m > 0$ .

That the proof now holds for $m=-1$ is similar to the case $m=0$ applied to $\alpha^{-1}$ and using \Cref{lem.CuCompareFnTranslates} (2). The case $m < -1$ follows from a similar argument as for $m>0$. The details are omitted. 
\end{proof}
\begin{corollary} \label{thm.Cuntzlattice} 
Let $X$ be a compact metric space, $\alpha : X \to X$ a minimal homeomorphism, $\mathscr{V} = [V, p, X]$ a line bundle, and $\E = \Gamma(\mathscr{V}, \alpha)$. Let $Y \subset X$ be a non-empty closed subset meeting each $\alpha$-orbit at most once and such that for every $N \in \Z_{\geq 0}$ there exists an open set $W_N \supset Y$ for which $\mathscr{V}|_{\alpha^n(W_N)}$ is trivial whenever $-N \leq n \leq N$.   Let $a, b \in C(X)_+ \setminus \{0\}$.  Then there exists $f \in C(X)_+ \setminus \{0\}$ such that $f \precsim_\cpalgOfEY a$ and $f \precsim_\cpalgOfEY b$.
\end{corollary}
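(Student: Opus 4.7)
The plan is to apply \Cref{thm.CuntzConstruction} twice, each time with $m=0$, in order to build two positive functions in $C(X)$ which, viewed as elements of $\cpalgOfEY$, are Cuntz-dominated by $a$ and $b$ respectively and which both take the value $1$ on $Y$. The desired $f$ will then be obtained as their pointwise product: it inherits Cuntz-dominance from each factor while remaining non-zero thanks to the shared value on $Y$.

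In more detail, let $U_a := \{x \in X : a(x) > 0\}$ and $U_b := \{x \in X : b(x) > 0\}$, both non-empty open subsets of $X$. Invoking \Cref{thm.CuntzConstruction} with the open set $U_a$ and $m=0$ produces positive functions $f_1,g_1 \in C(X)$ such that $\supp(g_1) \subset U_a$, $f_1 \precsim_{\cpalgOfEY} g_1$, $0 \leq f_1 \leq 1$ and $\restrict{f_1}{Y} \equiv 1$. Since the open support of $g_1$ sits inside the open support of $a$, \Cref{lem.CuntzResults}(1) yields $g_1 \precsim_{C(X)} a$; as a Cuntz subequivalence witnessed in a subalgebra remains valid in the ambient algebra, we obtain $f_1 \precsim_{\cpalgOfEY} a$ by transitivity. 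An analogous application with $U_b$ and $m=0$ produces $f_2 \in C(X)_+$ with $0 \leq f_2 \leq 1$, $\restrict{f_2}{Y} \equiv 1$ and $f_2 \precsim_{\cpalgOfEY} b$.

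Finally, set $f := f_1 f_2 \in C(X)_+$. Since $Y$ is non-empty and $\restrict{f}{Y} \equiv 1$, we have $f \neq 0$. The bounds $0 \leq f_1, f_2 \leq 1$ force $f \leq f_1$ and $f \leq f_2$, so \Cref{lem.CuntzResults}(2) together with transitivity furnishes $f \precsim_{\cpalgOfEY} a$ and $f \precsim_{\cpalgOfEY} b$, as required. There is no substantial obstacle to this argument: the genuine work has already been carried out in \Cref{thm.CuntzConstruction}, where the hypotheses on $Y$ and on the triviality of $\mathscr{V}$ along orbits of $Y$ are used. The only routine point worth noting is the general fact that Cuntz subequivalences witnessed by elements of the commutative subalgebra $C(X)$ automatically persist inside $\cpalgOfEY$, which is immediate since any approximating sequence in $\mathcal{K} \otimes C(X)$ lies inside $\mathcal{K} \otimes \cpalgOfEY$.
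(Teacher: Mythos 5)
Your proposal is correct and follows essentially the same route as the paper's proof: apply \Cref{thm.CuntzConstruction} (with $m=0$) to the open supports of $a$ and $b$ to get $f_1\precsim_{\cpalgOfEY} a$ and $f_2\precsim_{\cpalgOfEY} b$ with $f_1|_Y=f_2|_Y=1$, then take $f=f_1f_2$, which is non-zero on $Y$ and satisfies $f\le f_i$. The auxiliary remarks (persistence of Cuntz subequivalence from $C(X)$ to the larger algebra, and $f_1f_2\le f_i$ from $0\le f_i\le 1$) are exactly the points the paper uses implicitly.
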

\begin{proof} Let $U = \{x \in X : a(x) \not= 0\} = a^{-1} ((0, \infty))$.  Using \Cref{thm.CuntzConstruction} there are $a_1, g \in C(X)_+$ such that $0 \leq a_1 \leq 1$, $\restrict{a_1}{Y} = 1$, $a_1 \precsim_\cpalgOfEY g$ and $\supp(g) \subset U$. Since $U \subset \supp(a)$, we have $g \precsim_{C(X)} a$.  Repeating this with $b$, there are $b_1, h \in C(X)_+$ such that $0 \leq b_1 \leq 1$, $\restrict{b_1}{Y} = 1$, $b_1 \precsim_\cpalgOfEY h \precsim_{C(X)} b$.

Since $\restrict{a_1}{Y} = 1$ and $\restrict{b_1}{Y} = 1$ we have $a_1 b_1 \not= 0$, so $f := a_1 b_1 \in C(X)_+$ is non-zero.  Since $f \leq a_1$ it follows that  $f \precsim_{C(X)} a_1 \precsim_\cpalgOfEY g \precsim_{C(X)} a$.  Similarly, $f \precsim_\cpalgOfEY b$.
\end{proof}
\begin{lemma}\label{thm.fromfntocx}
Let $X$ be an infinite compact metric space, $\alpha : X\to X$ a minimal homeomorphism and $\mathscr{V} = [V, p, X]$ a line bundle. For any $a \in \bigoplus_{n \in \Z} E_n$ and any $\epsilon >0$, there exists $f \in C(X)$ such that 
$0 \leq f \leq 1$, and 
$\|fa^* a f \| \geq \| \Phi (a^* a) \| - \epsilon$.
\end{lemma}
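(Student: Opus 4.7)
The plan is to exploit aperiodicity of $\alpha$ (which follows from minimality on an infinite space) to construct $f$ supported on a sufficiently small neighborhood that, when squeezed between $a^*a$, annihilates every off-diagonal spectral subspace term and leaves only a function whose supremum norm is at least $\|\Phi(a^*a)\|$.

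Write $a = \sum_{n=-N}^N \xi_n$ with $\xi_n \in E_n$; this is a finite sum because $a \in \bigoplus_{n \in \Z} E_n$. Expanding gives $a^*a = \sum_{k=-2N}^{2N} b_k$ with $b_k \in E_k$, and in particular $b_0 = \Phi(a^*a) \in C(X)_+$. The crucial bimodule identity is that, for any $\eta \in E_k$ and any $f \in C(X)$, one has $f \eta = \eta (f \circ \alpha^k)$: for $k = 1$ this is just the definition of the left action on $\E$ recorded in \Cref{prop.LeftIP}; for $k = -1$ it follows by taking adjoints (using that $f$ is self-adjoint and $E_{-1} \cong \widehat{\E}$); and it extends to all $k \in \Z$ by multiplicativity of the tensor product structure. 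Consequently $f b_k f = b_k \cdot (f \circ \alpha^k) f$, which vanishes in $\mathcal{O}(\E)$ whenever $\supp f$ and $\alpha^{-k}(\supp f) = \supp(f \circ \alpha^k)$ are disjoint.

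Since $X$ is compact and $\Phi(a^*a)$ is continuous, choose $x_0 \in X$ with $\Phi(a^*a)(x_0) = \|\Phi(a^*a)\|$. Aperiodicity of $\alpha$ implies that the points $\alpha^k(x_0)$ for $-2N \leq k \leq 2N$ are pairwise distinct, and by continuity of the $\alpha^k$ there is an open neighborhood $V$ of $x_0$ with $\alpha^k(V) \cap V = \emptyset$ for every $k$ with $1 \leq |k| \leq 2N$. By Urysohn's lemma, pick $f \in C(X)$ satisfying $0 \leq f \leq 1$, $f(x_0) = 1$, and $\supp f \subseteq V$. Then $f b_k f = 0$ for each nonzero $k$ in the range, so
\[ f a^*a f \;=\; f b_0 f \;=\; f^2 \, \Phi(a^*a), \]
and evaluating the supremum norm at $x_0$ gives
\[ \|f a^* a f\| \;=\; \|f^2 \Phi(a^*a)\|_\infty \;\geq\; f(x_0)^2 \, \Phi(a^*a)(x_0) \;=\; \|\Phi(a^*a)\|, \]
which is in fact stronger than the stated estimate (the $\epsilon$ is not needed).

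There is no serious obstacle; the only subtlety worth double-checking is the bimodule identity $f \eta = \eta(f \circ \alpha^k)$ for negative $k$, which is obtained by taking adjoints of the positive-degree identity and using that $f = \overline{f}$, together with the fact that the spectral subspaces $E_k$ of $\mathcal{O}(\E)$ coincide with $\E^{\otimes k}$ (resp.\ $\widehat{\E}^{\otimes |k|}$), as recorded in \Cref{rem.DAofLB}.
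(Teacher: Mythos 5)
Your proof is correct and follows essentially the same route as the paper's: decompose $a^*a$ into spectral components, use aperiodicity to choose a bump function $f$ supported on a set disjoint from its nontrivial $\alpha$-translates so that the off-diagonal terms are killed and $fa^*af = f\,\Phi(a^*a)\,f$, then evaluate at a suitable point. The only (harmless) difference is that you place $x_0$ at the exact maximum of $\Phi(a^*a)$ and so dispense with the $\epsilon$, while the paper takes $x_0$ in the open set where $\Phi(a^*a) > \|\Phi(a^*a)\| - \epsilon$.
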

\begin{proof}
If $\|\Phi(a^* a)\| \leq \epsilon$ we can take $f=0$, so assume $\|\Phi (a^*a)\| > \epsilon$.
In that case, there exists an $N \in \Z_{\geq 0}$ such that $a^*a \in \bigoplus_{n = -N}^N E_n$.  Let $b_0 = \Phi(a^*a) \in C(X)$. Since $a^*a > 0$ and $\Phi$ is faithful, $b_0 > 0$.

Define $U := \{ x \in X \mid b_0 (x) > \|\Phi (a^*a)\| - \epsilon \}$, which is a non-empty open subset of $X$.  Choose $x_0 \in U$.  Since $\alpha$ is minimal, there exists a non-empty neighbourhood $V$ of $x_0$ such that $\setVn{i} \cap \setVn{j} = \emptyset$ for all $i\not=j$, $-N\leq i, j \leq N$. Taking the intersection with $U$ if necessary, we may assume $V \subset U$.  Choose $f \in C(X)$ such that $0 \leq f \leq 1$ with $\supp(f) \subset V$ and $f(x_0) = 1$. Since $\alpha^{n}(V) \cap V = \emptyset$ for any $n$ such that $0 < |n| \leq N$,  we have $f \xi f = f(f \circ \alpha^{-n}) \xi = 0$ for any $\xi \in E_n$.  Since $a^* a \in \bigoplus_{n = -N}^N E_n$ and $\Phi(a^*a) = b_0$, it follows that $f(a^*a)f = f b_0 f$. Finally, using $f(x_0)=1$ at the second step and the fact that $x_0 \in U$ at the third step, we have
\begin{equation*}
\|f b_0 f \| \geq f(x_0) b_0 (x_0) f(x_0) =b_0 (x_0) > \| \Phi (a^*a) \| - \epsilon, 
\end{equation*}
as required.
\end{proof}

The next lemma can be applied when $B = \mathcal{O}(\E)$ or $B = \mathcal{O}(\E_Y)$.

\begin{lemma} \label{thm.movecomparisonqntocx}
Let $X$ be an infinite compact metric space, $\alpha : X\to X$ a minimal homeomorphism and $\mathscr{V} = [V, p, X]$ a line bundle.  Set $\E := \Gamma(\mathscr{V}, \alpha)$.  Let $B \subset \cpalgOfE$ be a subalgebra for which $C(X) \subset B$ and $B \cap \oplus_{n \in \Z} E_n$ is dense in $B$.  Then, for any non-zero $a \in B_+$, there exists a non-zero $b \in C(X)_+$ such that $b \precsim_B a$.
\end{lemma}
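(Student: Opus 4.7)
The strategy is to combine faithfulness of the conditional expectation $\Phi \colon \cpalgOfE \to C(X)$ with the disjoint-support device used in the proof of \Cref{thm.fromfntocx}: approximate $a^{1/2}$ by an element of $B \cap \bigoplus_n E_n$, and then cut the resulting positive square down by a bump function $f$ whose support and its $\alpha$-iterates are pairwise disjoint, so that the cut-down collapses onto the $C(X)$-component $f\Phi(\tilde a) f$, which is a positive element of $C(X)$ comparable to $a$.

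First I would note that $\Phi$ is faithful on positive elements (since $\Phi(b^*b)=0$ forces $b=0$, as observed in \Cref{conditionalexpectation}), so $\Phi(a) \in C(X)_+$ is non-zero; set $\delta := \|\Phi(a)\| > 0$. Given $\epsilon > 0$ to be fixed later, density of $B \cap \bigoplus_{n\in\Z} E_n$ in $B$ lets me choose $c \in B \cap \bigoplus_{n\in\Z} E_n$ with $\|c - a^{1/2}\| < \epsilon$. Then $\tilde a := c^*c \in B_+ \cap \bigoplus_{n\in\Z} E_n$, and $\|\tilde a - a\| \leq \eta$ for some $\eta = O(\epsilon)$. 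Writing $\tilde a = \sum_{n=-N}^N \beta_n$ with $\beta_n \in E_n$, we have $\beta_0 = \Phi(\tilde a)$.

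Second I would construct $f$ exactly as in the proof of \Cref{thm.fromfntocx}: pick $x_0 \in X$ with $\Phi(\tilde a)(x_0) > \|\Phi(\tilde a)\| - \epsilon$, and using minimality of $\alpha$, choose an open neighbourhood $V$ of $x_0$ whose iterates $\alpha^i(V)$, $-N \leq i \leq N$, are pairwise disjoint. Take $f \in C(X)$ with $0 \leq f \leq 1$, $\supp f \subset V$, and $f(x_0) = 1$. A short check using the commutation relation $f\beta_n = \beta_n (f \circ \alpha^n)$ for $\beta_n \in E_n$ gives $f \beta_n f = \beta_n \cdot (f\circ\alpha^n) f = 0$ for $0 < |n| \leq N$, so $f \tilde a f = f \Phi(\tilde a) f \in C(X)_+$.

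Finally I would close the argument using R\o{}rdam's perturbation. Since $\|f \tilde a f - f a f\| \leq \|\tilde a - a\| \leq \eta$ and both elements lie in $B$ (as $f \in C(X) \subset B$), \Cref{lem.CuntzResults}(3) yields
\[ b := (f\Phi(\tilde a) f - \eta)_+ = (f\tilde a f - \eta)_+ \precsim_B f a f \precsim_B a,\]
the last step because $faf \leq a$. The only remaining point is that $b$ must be non-zero, which amounts to $\|f\Phi(\tilde a) f\| > \eta$; this is the main bookkeeping step but not a true obstacle, since $\|f\Phi(\tilde a) f\| \geq \Phi(\tilde a)(x_0) \geq \delta - O(\epsilon)$ while $\eta = O(\epsilon)$, so fixing $\epsilon$ small enough at the outset forces $b$ to be a non-zero element of $C(X)_+$ with $b \precsim_B a$.
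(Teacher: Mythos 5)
Your proposal is correct and is essentially the argument the paper intends: the paper defers to \cite[Lemma 7.9]{PhLarge}, whose proof is exactly this combination of faithfulness of $\Phi$, approximation of $a^{1/2}$ from $B \cap \bigoplus_{n}E_n$, the disjoint-support cut-down from \Cref{thm.fromfntocx} forcing $f\tilde{a}f = f\Phi(\tilde{a})f \in C(X)_+$, and the perturbation lemma \Cref{lem.CuntzResults}(3). The only cosmetic point is that $faf \leq a$ need not hold literally since $f$ and $a$ need not commute; one should instead write $faf = (a^{1/2}f)^*(a^{1/2}f) \sim a^{1/2}f^2a^{1/2} \leq a$ and invoke \Cref{lem.CuntzResults}(4) and (2) to get $faf \precsim_B a$.
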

\begin{proof}
Having established \Cref{thm.fromfntocx}, the proof is identical to that of \cite[Lemma 7.9]{PhLarge}. The details are omitted.
\end{proof}

\begin{proposition}\label{Almost_large_sub}
Let $X$ be an infinite compact metric space, $\alpha : X \to X$ a minimal homeomorphism and $\mathscr{V} = [V, p, X]$ a line bundle. Let $Y \subset X$ be a non-empty closed subset meeting each $\alpha$-orbit at most once and such that for every $N \in \Z_{\geq 0}$ there exists an open set $W_N$ containing $Y$ for which $\mathscr{V}|_{\alpha^n(W_N)}$ is trivial whenever $-N \leq n \leq N$.  Let $\E = \Gamma(\mathscr{V}, \alpha)$ and $\E_Y := C_0(X \setminus Y) \E.$ 

For every $m, M \in \mathbb{Z}_{>0}$, $c_1, \dots, c_m \in \bigoplus_{n \in \mathbb{Z}} E_n$, non-zero $x \in \mathcal{O}(\E)_+$, and non-zero $y \in \mathcal{O}(\E_Y)_+$, there is $g \in C(X)_+$ such that
\begin{enumerate}
\item \label{pvebound} $0 \leq g \leq 1$
\item  \label{cutdown} for $j = 1, \dots, m $ we have $(1 - g) c_j \in \mathcal{O}(\E_Y)$,
\item \label{gleqCub} $g \precsim_\cpalgOfEY y$ and $g \precsim_\cpalgOfE x$,
\item \label{item.ExtraCond} $g \sim_{\mathcal{O}(\E)} g \circ \alpha^k$ for every $-M \leq k \leq M$.
\end{enumerate}
\end{proposition}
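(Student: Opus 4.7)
The plan is to construct $g$ as a disjoint sum of $2N$ cap functions, one supported near each $\alpha^k(Y)$ for $-N \leq k \leq N-1$, where $N \in \mathbb{Z}_{>0}$ is chosen so that every $c_j \in \bigoplus_{n=-N}^{N} E_n$. Making $g$ equal to $1$ on $\bigcup_{k=-N}^{N-1}\alpha^k(Y)$ will handle condition~(2): writing $c_j = \sum_n c_{j,n}$ with $c_{j,n} \in E_n$ and using that $c_{j,n}\eta_1^* \cdots \eta_n^* \in C(X)$ for $\eta_i \in \E$, one obtains
\[(1-g)c_{j,n}\eta_1^* \cdots \eta_n^* = (c_{j,n}\eta_1^* \cdots \eta_n^*)(1-g) \in C_0(X \setminus Y_n)\]
by associativity in $\mathcal{O}(\E)$ together with $C(X)$-commutativity, since $g|_{Y_n} = 1$; the symmetric identity handles $n < 0$, and \Cref{prop:CondExPict} then gives $(1-g)c_j \in \mathcal{O}(\E_Y)$.

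Set $K = N + M$. The first step is to produce a common Cuntz-anchor: \Cref{thm.movecomparisonqntocx} applied to $(B,z) \in \{(\mathcal{O}(\E_Y), y),(\mathcal{O}(\E), x)\}$ furnishes non-zero $b_1, b_2 \in C(X)_+$ with $b_1 \precsim_{\mathcal{O}(\E_Y)} y$ and $b_2 \precsim_{\mathcal{O}(\E)} x$, and \Cref{thm.Cuntzlattice} then yields $f \in C(X)_+ \setminus \{0\}$ with $f \precsim_{\mathcal{O}(\E_Y)} b_1$ and $f \precsim_{\mathcal{O}(\E_Y)} b_2$. Choose a neighborhood $W$ of $Y$ contained in the $W_K$ given by the hypothesis and shrunk (via the standard compactness argument using $Y \cap \alpha^k(Y) = \emptyset$ for $k \neq 0$) so that $\alpha^k(W)$ are pairwise disjoint for $-K \leq k \leq K$. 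Pick pairwise disjoint non-empty open sets $U_{-N}, \dots, U_{N-1} \subset \{f > 0\}$, and for each $k$ apply \Cref{thm.CuntzConstruction} with open set $U_k$ and parameter $m = k$ to obtain $F_k, G_k \in C(X)_+$ with $0 \leq F_k \leq 1$, $F_k|_{\alpha^k(Y)} = 1$, $\supp(G_k) \subset U_k$, and $F_k \precsim_{\mathcal{O}(\E_Y)} G_k$. Inspecting the proof of \Cref{thm.CuntzConstruction} shows that its internal neighborhood of $Y$ can be taken inside $W$, so $\supp(F_k) \subset \alpha^k(W)$; disjointness of the $\alpha^k(W)$ then makes the $\supp(F_k)$ pairwise disjoint as well. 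Set $g := \sum_{k=-N}^{N-1} F_k$.

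Condition~(1) is immediate, and~(2) follows from the first paragraph. For~(3), disjointness combined with \Cref{lem.CuntzResults}(6) gives $g \sim \bigoplus_k F_k$ and $\sum_k G_k \sim \bigoplus_k G_k$; \Cref{lem.CuntzResults}(7) applied to the coordinate-wise relations $F_k \precsim_{\mathcal{O}(\E_Y)} G_k$ yields $g \precsim_{\mathcal{O}(\E_Y)} \sum_k G_k$; and since the open support of $\sum_k G_k$ lies in $\bigcup U_k \subset \{f > 0\}$, \Cref{lem.CuntzResults}(1) gives $\sum_k G_k \precsim_{C(X)} f$. Combining, $g \precsim_{\mathcal{O}(\E_Y)} f \precsim_{\mathcal{O}(\E_Y)} y$, and the same argument produces $g \precsim_{\mathcal{O}(\E)} x$. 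For~(4), the Cuntz-equivalence construction underlying \Cref{lem.CuCompareFnTranslates}, but now carried out in $\mathcal{O}(\E)$ rather than $\mathcal{O}(\E_Y)$ (so no orbit-breaking vanishing condition is required, only local triviality of $\mathscr{V}$), produces $F_k \sim_{\mathcal{O}(\E)} F_k \circ \alpha^j$ for $|j| \leq M$; since $\supp(F_k \circ \alpha^j) \subset \alpha^{k-j}(W)$ with $|k - j| \leq K$, these translates remain pairwise disjoint, so \Cref{lem.CuntzResults}(6) and~(7) combine to yield $g \sim_{\mathcal{O}(\E)} g \circ \alpha^j$.

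The main obstacle is achieving the Cuntz subequivalence $g \precsim_{\mathcal{O}(\E_Y)} y$ \emph{inside} $\mathcal{O}(\E_Y)$ itself, rather than in a matrix enlargement, even though $g$ is built as a $2N$-fold sum. The resolution is to insist from the outset that both the $F_k$ and the $G_k$ have pairwise disjoint supports: the direct-sum majorant $\bigoplus_k G_k$ then collapses, via \Cref{lem.CuntzResults}(6), back to the pointwise sum $\sum_k G_k$, a single function in $C(X)$ that is dominated by $f$ in $C(X)$ and hence, transitively, by $y$.
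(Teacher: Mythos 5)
Your proof is correct and takes essentially the same route as the paper's: the same anchor function $f$ produced by \Cref{thm.movecomparisonqntocx} and \Cref{thm.Cuntzlattice}, the same decomposition of $g$ into orthogonal bumps equal to $1$ on the translates $\alpha^k(Y)$ and supported in pairwise disjoint trivialising neighbourhoods $\alpha^k(W)$ with $K=N+M$, \Cref{thm.CuntzConstruction} for condition (3), local units (\Cref{thm.LocalUnits}) for condition (4), and \Cref{prop:CondExPict} for condition (2). The only cosmetic differences are that the paper indexes the bumps over $\{-N,\dots,N\}$ rather than your (sufficient) range $\{-N,\dots,N-1\}$, and it controls their supports by cutting down the output of \Cref{thm.CuntzConstruction} (choosing $g_l\le g_l'$ supported in $\alpha^l(V)$) rather than by inspecting that lemma's proof; both devices work.
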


\begin{proof}
Apply \Cref{thm.movecomparisonqntocx} twice (once for $x  \in \cpalgOfE$ and once for $y \in \cpalgOfEY$), followed by \Cref{thm.Cuntzlattice}, to find a function $f \in C(X)_+ \setminus \{0\}$ such that $f \precsim_\cpalgOfE x$ and $f \precsim_\cpalgOfEY y$.

Choose $N \in \Z_{>0}$ such that $c_j \in \bigoplus_{n = -N}^N E_n$ for all $ 1\leq j \leq m$. Let 
\[ K := N+M.\]

Let $U := \{ x \in X \mid f(x) \neq 0 \}$ denote the open support of $f$ and let $U_l \subset U$, for $l= -N, \dots, N$ be non-empty open sets satisfying $U_l \cap U_k = \emptyset$ for $k\neq l$.  

Choose an open set $V\subset X$ such that  $Y  \subset V$, $\alpha^{-K}(V), \dots , \alpha^K(V)$ are pairwise disjoint and $\mathscr{V}|_{\alpha^l(V)}$ is trivial for $-K \leq l \leq K$. 

For each $-N \leq l \leq N$, by  \Cref{thm.CuntzConstruction} there are functions $f_l, g'_l \in C(X)_+$ such that $\supp(f_l) \subset U_l$, $g'_l|_{\setYn{l}} = 1$, and $g'_l \precsim_{\mathcal{O}(\E_Y)} f_l$. Since $Y \subset V$ there are $g_l \leq g'_l$ with $\supp(g_l) \subset \alpha^l(V)$. Note that since $\alpha^{-K}(V), \dots , \alpha^K(V)$ are pairwise disjoint, this implies $g_l g_k = 0$ when $l \neq k$. Let $g = \sum_{l = -N}^N g_l$. Note that $0 \leq g \leq 1$, so $g$ satisfies (\ref{pvebound}). Since $\supp(f_l) \subset U_l$, and the $U_l$ are pairwise disjoint, we have
\begin{equation*}
g \sim_{C(X)} \bigoplus_{l=-N}^N g_l \leq  \bigoplus_{l=-N}^N g'_l \precsim_\cpalgOfEY \bigoplus_{l = -N}^N f_l \sim_{C(X)} \sum_{l = -N}^N f_l \precsim_{C(X)} f.
\end{equation*}
Thus since $f \precsim_\cpalgOfE x$ and $f \precsim_\cpalgOfEY y$, we see that $g$ satisfies (\ref{gleqCub}). 

Since $\mathscr{V}|_{\alpha^{k}(V)}$ is trivial for every $-K \leq k \leq K$, and $\supp(g_l \circ \alpha^j) \subset \alpha^{l-j}(V)$, $-M \leq j \leq M$, $-N \leq l \leq N$, by \Cref{thm.LocalUnits} there are $\xi_{l-j} \in \E$, $-K \leq l-j \leq K$ such that $\xi_{l-j}^*\xi_{l-j} \, g_l \circ \alpha^j = g_l \circ \alpha^j$ and $\xi_{l-j}\xi_{l-j}^* \, g_l \circ \alpha^{j-1} = g_l \circ \alpha^{j-1}$. Thus 
\begin{align*}
g_l\circ\alpha^j &=\xi_{l-j}^*\xi_{l-j} g_l\circ\alpha^j=(g_l\circ\alpha^j)^{1/2}\xi_{l-j}^*\xi_{l-j}(g_l\circ\alpha^j)^{1/2}\\
&\sim 
\xi_{l-j}(g_l\circ\alpha^j)\xi_{l-j}^*\\
&=(\xi_{l-j}\xi_{l-j}^*)(g_l\circ\alpha^j)\circ \alpha^{-1}\\
&=g_l\circ\alpha^{j-1}.\end{align*}
Therefore $g_l\sim g_l\circ\alpha^j$ for $-N\leq l\leq N$ and $-M\leq j\leq M$. Fix $j \in \{-M, \ldots, M\}$.  Since the sets $\alpha^{-N-j}(V), \cdots, \alpha^{N-j}(V)$ are pairwise disjoint, the functions $g_{-N} \circ \alpha^j, \dots, g_{N} \circ \alpha^j$ are pairwise orthogonal. Thus 
\begin{align*}
g = \sum_{l=-N}^N g_l &\sim_{C(X)} \bigoplus_{l=-N}^N   g_l \sim_{\mathcal{O}(\E)} \bigoplus_{l=-N}^N    g_l \circ \alpha^j  \sim_{\mathcal{O}(\E)} \sum_{l=-N}^N g_l \circ \alpha^j = g \circ \alpha^j.\\
\end{align*}
This shows (\ref{item.ExtraCond}).

Finally, since 
\[
g|_{\bigcup_{l=-N}^{N}\alpha^l(Y)} = \sum_{l=-N}^N \, g_l|_{\alpha^l(Y)} = \chi_{\cup_{l=-N}^{N}\alpha^l(Y)},
\]
we have $1 - g \in C_0(X \setminus \cup_{k = -N}^{N} \setYn{k})$.  Thus by \Cref{prop:CondExPict}, for any $n$, $-N\leq n \leq N$ and $\xi \in E_n$ we must have $(1 - g) \xi \in \cpalgOfEY$. By linearity, since $c_j \in \bigoplus_{n = -N}^N E_n$, we have $(1 - g)c_j \in \cpalgOfEY$ for all $j$, $1\leq j \leq m$, which is to say, (\ref{cutdown}) is satisfied.
\end{proof}

\begin{theorem}\label{large_sub}Let $X$ be an infinite compact metric space, $\alpha : X \to X$ a minimal homeomorphism and $\E= \Gamma(\mathscr{V}, \alpha)$, for a line bundle $\mathscr{V} = [V, p, X]$. Let $Y\subset X$ be a non-empty closed subset meeting each $\alpha$-orbit at most once and such that for every $N \in \Z_{\geq 0}$ there exists an open set $W_N \supset Y$ for which  $\mathscr{V}|_{\alpha^n(W_N)}$ is trivial whenever $-N \leq n \leq N$. Then 
$\mathcal{O}(\E_Y)$ is a large subalgebra of  $\mathcal{O}(\E)$. In fact, it is stably large.
\end{theorem}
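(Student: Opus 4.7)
The plan is to reduce the theorem to \Cref{Almost_large_sub} via two additional arguments that bridge the gap between its conclusion and the full content of \Cref{defn.largesubalgebra}. The first reduction is routine: since $\bigoplus_{n \in \mathbb{Z}} E_n$ is dense in $\mathcal{O}(\E)$ (\Cref{rem.DAofLB}), for each $a_j$ I can find $c_j \in \bigoplus_{n=-N}^{N} E_n$ for some common $N$ with $\|c_j - a_j\| < \epsilon$, which handles condition (2) of \Cref{defn.largesubalgebra}.

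The second reduction, which addresses the norm condition $\|(1-g)x(1-g)\| > 1 - \epsilon$, is the heart of the argument. I will invoke a standard lemma from Phillips' theory of large subalgebras (cf.~\cite[Section 4]{PhLarge}): since $\mathcal{O}(\E)$ is simple (by \Cref{19-10-18-3}) and $\|x\|=1$, there exists a non-zero $x_0 \in \mathcal{O}(\E)_+$---for instance of the form $(x - 1 + \delta)_+$ with $\delta$ sufficiently small---such that any $g \in \mathcal{O}(\E)$ with $0 \leq g \leq 1$ and $g \precsim_{\mathcal{O}(\E)} x_0$ automatically satisfies $\|(1-g)x(1-g)\| > 1 - \epsilon$. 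This step uses Rordam's lemma and functional calculus but is independent of the Cuntz--Pimsner structure.

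With these in hand, I apply \Cref{Almost_large_sub} to the data $c_1, \dots, c_m$ with $M = N$, the element $x_0$ in place of $x$, and the original $y$. The output $g \in C(X)_+ \subset \mathcal{O}(\E_Y)$ satisfies conditions (1) and (3) of \Cref{defn.largesubalgebra} directly; the Cuntz comparisons $g \precsim_{\mathcal{O}(\E_Y)} y$ and $g \precsim_{\mathcal{O}(\E)} x_0 \precsim x$ yield condition (4); and the reduction lemma delivers the norm condition (5). This proves $\mathcal{O}(\E_Y)$ is large.

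For stable largeness, the key observation is that $g$ lies in $C(X)_+$, hence $g \otimes 1_n \in M_n(C(X)) \subset M_n(\mathcal{O}(\E_Y))$. The constructions in \Cref{Almost_large_sub} rely only on positive scalar-valued functions on $X$, so they transfer verbatim to matrix amplifications with the same diagonal $g \otimes 1_n$; the cut-down condition $(1 - g\otimes 1_n) c_j \in M_n(\mathcal{O}(\E_Y))$ for $c_j \in M_n(\bigoplus_{k} E_k)$ is verified entrywise via \Cref{prop:CondExPict}. The simplicity-based reduction for the norm condition applies equally well in $M_n(\mathcal{O}(\E))$, which remains simple. The main technical obstacle throughout is the simplicity reduction establishing the norm inequality; everything else follows cleanly from \Cref{Almost_large_sub} and standard density arguments.
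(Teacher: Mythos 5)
Your overall architecture (density to get condition (2), then \Cref{Almost_large_sub} for (1), (3), (4), then a general principle for the norm condition (5)) matches the paper's proof, but the lemma you invoke for condition (5) is false as stated, and this is the step that carries all the weight. You claim that, by simplicity alone, $x_0 := (x-1+\delta)_+$ has the property that every $g$ with $0 \leq g \leq 1$ and $g \precsim_{\mathcal{O}(\E)} x_0$ satisfies $\|(1-g)x(1-g)\| > 1-\epsilon$. Cuntz subequivalence is a comparison of ``size'', not of spatial location, so it cannot force $1-g$ to survive on the support of $x$. Concretely, in any simple unital stably finite infinite dimensional $\mathrm{C}^*$-algebra containing a nontrivial projection $p$, take $x = p$; then $x_0 = (p - 1 + \delta)_+ = \delta p \sim p$, and $g = p$ satisfies $0 \leq g \leq 1$ and $g \precsim x_0$, yet $(1-g)x(1-g) = 0$. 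So no such lemma can follow from simplicity, and there is none in \cite{PhLarge}.

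What the paper actually uses is \cite[Proposition 4.5]{PhLarge}: for an infinite dimensional simple unital $\mathrm{C}^*$-algebra that is \emph{finite}, condition (5) of \Cref{defn.largesubalgebra} is redundant once (1)--(4) can always be arranged. The mechanism is close to your instinct but crucially different: if $\|(1-g)x(1-g)\| \leq 1-\epsilon$ then $(x-1+\epsilon)_+ \precsim g$ (a standard Cuntz lemma), so one must choose $x_0$ inside the hereditary subalgebra generated by $(x-1+\epsilon)_+$ with $(x-1+\epsilon)_+ \not\precsim x_0$; the existence of such a strictly Cuntz-smaller $x_0$ is exactly where finiteness enters (and fails in the purely infinite case, consistent with the counterexample above). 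So you must invoke \Cref{cor:stabfin} --- stable finiteness of $\mathcal{O}(\E)$, available because $\mathscr{V}$ is a line bundle and $\alpha$ is minimal --- not merely simplicity. Finiteness also disposes of stable largeness in one line via \cite[Corollary 5.8]{PhLarge}, whereas your matrix-amplification sketch glosses over the fact that one needs $g \otimes 1_n \precsim y$ for an arbitrary $y \in M_n(\mathcal{O}(\E_Y))_+$, i.e.\ an $n$-fold direct sum comparison, which does not transfer ``verbatim'' from \Cref{Almost_large_sub}.
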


\begin{proof} Let $m \in \mathbb{Z}_{>0}$, $a_1, \dots, a_m \in \mathcal{O}(\E)$, $\epsilon > 0$, $x \in \mathcal{O}(\E)_+$ with $\| x \| =1$, and non-zero $y \in \mathcal{O}(\E_Y)_+$. 

Choose  $c_1 , \dots c_m \in \bigoplus_{n \in \Z} E_n$ such that $\|a_j - c_j \| < \epsilon$ for $ j =1, \dots, m$. Then the $c_j$, for $1\leq j\leq m$, satisfy (\ref{item.closetoas}) of \Cref{defn.largesubalgebra}.

Apply \Cref{Almost_large_sub}  to $m \in \mathbb{Z}_{>0}$, $c_1, \dots, c_m \in \bigoplus_{n \in \mathbb{Z}}E_n$, $\epsilon>0$, $x \in \mathcal{O}(\E)_+$ and $y \in \mathcal{O}(\E_Y)_+$. This gives us $g \in C(X)_+$ satisfying (\ref{item.pvebound}), (\ref{item.cutdown}) and (\ref{item.gleqCub}) of \Cref{defn.largesubalgebra}.

Since $\mathcal{O}(\E)$ is finite when $\alpha$ is minimal and $\mathscr{V}$ is a line bundle (\Cref{cor:stabfin}), by  \cite[Proposition 4.5]{PhLarge}, condition (\ref{defn.largesubalgebra.largeg}) of \Cref{defn.largesubalgebra} can always be satisfied. Thus $\mathcal{O}(\E_Y)$ is a large subalgebra of $\mathcal{O}(\E)$.

For the final statement, by~\Cref{cor:stabfin} $\mathcal{O}(\E)$ is stably finite so any large subalgebra in $\mathcal{O}(\E)$ is stably large \cite[Corollary 5.8]{PhLarge}.
\end{proof}

\begin{corollary} \label{thm.largesubalgproperties}
Let $X$ be an infinite compact metric space, $\alpha : X \to X$ a minimal homeomorphism and $\E= \Gamma(\mathscr{V}, \alpha)$, for a line bundle $\mathscr{V} = [V, p, X]$. Let $Y\subset X$ be a non-empty closed subset meeting each $\alpha$-orbit at most once and such that for every $N \in \Z_{\geq 0}$ there exists an open set $W_N \supset Y$ for which  $\mathscr{V}|_{\alpha^n(W_N)}$ is trivial whenever $-N \leq n \leq N$. Then  $\mathcal{O}(\E_Y)$ is simple and there is an affine homeomorphism $T(\mathcal{O}(\E_Y)) \cong T(\mathcal{O}(\E))$.
\end{corollary}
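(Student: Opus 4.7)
The plan is to appeal directly to the machinery of large subalgebras developed by Phillips in~\cite{PhLarge}, using \Cref{large_sub} as the main input. Once we know that $\mathcal{O}(\E_Y)$ is a (stably) large subalgebra of the simple, unital, infinite dimensional, stably finite $\mathrm{C}^*$-algebra $\mathcal{O}(\E)$, both conclusions of the corollary follow from general facts about large subalgebras.

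First I would note that the ambient algebra $\mathcal{O}(\E)$ is simple by \Cref{19-10-18-3}, infinite dimensional (since $X$ is infinite and $C(X) \subset \mathcal{O}(\E)$), unital, and stably finite by \Cref{cor:stabfin}. By \Cref{large_sub}, the orbit-breaking subalgebra $\mathcal{O}(\E_Y)$ is stably large in $\mathcal{O}(\E)$. For simplicity, I would invoke \cite[Proposition 5.2]{PhLarge}, which states that any large subalgebra of a simple, infinite dimensional unital $\mathrm{C}^*$-algebra is again simple; this immediately gives the simplicity of $\mathcal{O}(\E_Y)$.

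For the tracial state spaces, the idea is to apply the restriction map $\tau \mapsto \restrict{\tau}{\mathcal{O}(\E_Y)}$. By \cite[Proposition 5.7]{PhLarge}, the restriction map $T(\mathcal{O}(\E)) \to T(\mathcal{O}(\E_Y))$ is a continuous affine injection from the fact that $\mathcal{O}(\E_Y)$ is large, while \cite[Theorem 6.2]{PhLarge} asserts that if $B$ is stably large in a stably finite algebra $A$, then this restriction map is in fact a bijection (hence an affine homeomorphism, as both simplices are compact Hausdorff in the weak-$*$ topology and the map is continuous). Since we have verified both stable largeness and stable finiteness of $\mathcal{O}(\E)$, the desired affine homeomorphism $T(\mathcal{O}(\E)) \cong T(\mathcal{O}(\E_Y))$ follows.

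The main work was in establishing largeness (\Cref{large_sub}); once that has been done, the present corollary is essentially a citation to \cite{PhLarge}. The only mild subtlety is ensuring the hypotheses of Phillips' results are genuinely satisfied, in particular that $\mathcal{O}(\E)$ is infinite dimensional and stably finite so that both \cite[Proposition 5.2]{PhLarge} and \cite[Theorem 6.2]{PhLarge} apply; both of these have been recorded above.
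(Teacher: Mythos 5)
Your proposal is correct and follows essentially the same route as the paper: the paper's proof likewise combines \Cref{19-10-18-3} and \Cref{large_sub} with Proposition 5.2 of \cite{PhLarge} for simplicity and Theorem 6.2 of \cite{PhLarge} for the identification of tracial state spaces. Your additional verification of the hypotheses (infinite dimensionality, stable finiteness, stable largeness) is a welcome but inessential elaboration of what the paper leaves implicit.
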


\begin{proof}
This follows from~\Cref{19-10-18-3} together with the properties of large subalgebras of infinite dimensional simple unital \cstar-algebras.  Proposition 5.2 of \cite{PhLarge} gives us the simplicity, and Theorem 6.2 of \cite{PhLarge} gives the result about the traces. 
\end{proof}

To determine when $\mathcal{O}(\E_Y)$ itself falls within the scope of \Cref{ClassThm}, we need to determine that it has finite nuclear dimension. To show this, as well as to establish classification when $X$ is not necessarily finite dimensional (see \cite{FoJeSt:rsh}), we require that $\mathcal{O}(\E_Y)$ is centrally large in $\mathcal{O}(\E)$.

\begin{theorem} \label{thm.iscentrallylarge}
Let $X$ be an infinite compact metric space, $\alpha : X \to X$ a minimal homeomorphism and $\mathscr{V} = [V, p, X]$ a line bundle over $X$.  Let $Y \subset X$ be a non-empty closed subset meeting each $\alpha$-orbit at most once and such that for every $N \in \Z_{\geq 0}$ there exists an open set $W_N \supset Y$ for which $\mathscr{V}|_{\alpha^n(W_N)}$ is trivial whenever $-N \leq n \leq N$. 

Let $\E = \Gamma(\mathscr{V}, \alpha)$ and $\E_Y := C_0(X \setminus Y) \E$. Then $\mathcal{O}(\E_Y)$ is a centrally large subalgebra of $\mathcal{O}(\E)$.
\end{theorem}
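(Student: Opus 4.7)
Conditions (1)--(5) of \Cref{defn.largesubalgebra} are already achievable by the $g$ produced in \Cref{Almost_large_sub}, together with the stable-finiteness argument used in \Cref{large_sub}. The content of this theorem is the additional approximate commutativity $\|g a_j - a_j g\| < \epsilon$. My plan is to refine the construction of $g$ so that it retains the orbit-breaking and Cuntz-smallness properties while also being approximately $\alpha$-invariant.

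I would first approximate each $a_j$ by some $c_j = \sum_{n=-N}^N c_{j,n}$ with $c_{j,n} \in E_n$ and $\|c_{j,n}\| \le \|c_j\|$ (using the spectral projections associated to the gauge action, which are contractive). The identity $gb = b(g\circ\alpha^n)$ valid for $g \in C(X)$ and $b \in E_n$ then gives $gc_j - c_j g = \sum_{n=-N}^N c_{j,n}(g\circ\alpha^n - g)$, reducing (6) to $\max_{|n|\le N}\|g\circ\alpha^n - g\| < \delta$ for a $\delta$ depending only on $\epsilon$, $N$, and $\max_j\|c_j\|$. For this I choose $M$ with $M - N > N/\delta$, an open set $V\supset Y$ for which $\alpha^l(V)$ are pairwise disjoint and $\mathscr{V}|_{\alpha^l(V)}$ is trivial for $|l| \le 2M$, and a fixed $h \in C_0(V)_+$ with $h|_Y = 1$ and $0 \le h \le 1$. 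Then I set
\[ g := \sum_{l=-M}^{M} w_l\, h\circ\alpha^{-l},\]
where the weights $w_l$ are trapezoidal: $w_l = 1$ for $|l|\le N$ and decreasing linearly to $0$ as $|l|\to M$. The pairwise disjoint supports of the summands immediately give $0 \le g \le 1$, $g|_{\alpha^l(Y)} = 1$ for $|l|\le N$ (so $(1-g)c_j \in \cpalgOfEY$ by \Cref{prop:CondExPict}), and $\|g\circ\alpha^k - g\| \le |k|/(M-N) < \delta$ for $|k|\le N$.

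The main obstacle I anticipate is the Cuntz-smallness $g \precsim_{\cpalgOfEY} y$ and $g \precsim_{\cpalgOfE} x$, since the approximate invariance forces the summands of $g$ to be translates of a single $h$, so the sum of $2M+1$ such translates must still be Cuntz-dominated by one of the small prescribed positive elements. My plan for this: reduce via \Cref{thm.movecomparisonqntocx} (applied twice) and \Cref{thm.Cuntzlattice} to a single $f \in C(X)_+\setminus\{0\}$ with $f \precsim_{\cpalgOfE} x$ and $f \precsim_{\cpalgOfEY} y$; split $\supp(f)$ into $2M+1$ pairwise disjoint non-empty open subsets $U_{-M}, \dots, U_M$; for each $l$ apply \Cref{thm.CuntzConstruction} with $m = 0$ and $U = U_l$ to get $h'_l, q_l \in C(X)_+$ with $h'_l|_Y = 1$, $\supp(q_l) \subset U_l$ and $h'_l \precsim_{\cpalgOfEY} q_l$; and take $h$ small enough that $h \le h'_l$ for every $l$ (for instance $h := \rho\cdot\min_l h'_l$ for a suitable cutoff $\rho \in C_0(V)_+$ with $\rho|_Y = 1$). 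Combining with the translation Cuntz equivalence $h\circ\alpha^{-l} \sim_{\cpalgOfEY} h$ coming from \Cref{lem.CuCompareFnTranslates}, I then obtain
\[ g \precsim_{C(X)} \sum_l h\circ\alpha^{-l} \sim_{C(X)} \bigoplus_l h\circ\alpha^{-l} \sim_{\cpalgOfEY} \bigoplus_l h \precsim_{\cpalgOfEY} \bigoplus_l q_l \sim_{C(X)} \sum_l q_l \precsim f,\]
and condition (5) of \Cref{defn.largesubalgebra} is free from stable finiteness (\Cref{cor:stabfin}) via \cite[Proposition 4.5]{PhLarge}, exactly as in the proof of \Cref{large_sub}.
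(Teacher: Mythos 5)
Your reduction of condition (6) to $\max_{|n|\le N}\|g\circ\alpha^{n}-g\|<\delta$ and the trapezoidal construction $g=\sum_{l=-M}^{M}w_l\,h\circ\alpha^{-l}$ over pairwise disjoint translates of $V$ are correct, and in fact more transparent than the paper's iterated construction of cutoffs $g_1,\dots,g_{M-1}$ and the telescoping estimate for $\|h-h\circ\alpha\|$. The gap is in the Cuntz comparison $g\precsim_{\cpalgOfEY}y$, specifically at the step $\bigoplus_{l}h\circ\alpha^{-l}\sim_{\cpalgOfEY}\bigoplus_{l}h$. \Cref{lem.CuCompareFnTranslates} only moves a positive function in \emph{one} direction relative to $Y$: part (1) gives $h\sim_{\cpalgOfEY}h\circ\alpha^{-l}$ for $l>0$, since your $h$ automatically vanishes on $\bigcup_{n=1}^{l}\alpha^{-n}(Y)$; but part (2), which is what you would need for $h\sim_{\cpalgOfEY}h\circ\alpha^{|l|}$ when $l<0$, requires $h$ to vanish on $\bigcup_{n=0}^{|l|-1}\alpha^{n}(Y)\supset Y$, whereas your $h$ satisfies $h|_{Y}=1$ by design. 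This hypothesis is not removable: the element $h^{1/2}\xi_1\cdots\xi_{|l|}$ that would implement the equivalence fails the membership criterion of \Cref{prop:CondExPict} precisely because $h$ does not vanish on $Y$. So the summands $h\circ\alpha^{|l|}$, $l<0$ --- which you cannot discard, since they are what makes $g=1$ on $\alpha^{l}(Y)$ for negative $l$ and hence $(1-g)c_j\in\cpalgOfEY$ --- are exactly the ones you cannot compare to $h$ inside $\cpalgOfEY$.

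The paper circumvents this by performing the comparison of $g$ with the orthogonal sum of translates entirely in the ambient algebra $\cpalgOfE$, where translation in both directions is available (this is condition (iv) of \Cref{Almost_large_sub}), and then transferring $g\precsim y$ down to $\cpalgOfEY$ via \cite[Lemma 6.5]{PhLarge}, using that $\cpalgOfEY$ is already known to be \emph{stably} large in $\cpalgOfE$ by \Cref{large_sub}; the transfer costs one extra orthogonal summand, which is why $y_0$ is chosen there with $\bigoplus_{k=1}^{2M}y_0\precsim_{\cpalgOfEY}y$ rather than $2M-1$ copies. Your construction of $g$ can be salvaged the same way: establish $g\precsim_{\cpalgOfE}\bigoplus_{l}h\circ\alpha^{-l}\sim_{\cpalgOfE}\bigoplus_{l}h\precsim_{\cpalgOfE}y$ in the big algebra, then invoke stable largeness and \cite[Lemma 6.5]{PhLarge} (with an extra copy of $h$ split off from $y$) to land in $\cpalgOfEY$. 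Two minor further points: for dropping condition (5) in the \emph{centrally} large definition the correct reference is \cite[Proposition 2.2]{ArchPhil:SR1} rather than \cite[Proposition 4.5]{PhLarge}; the rest of your conditions (1)--(3) are handled correctly.
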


\begin{proof} 
Let $m \in \Z_{> 0}$, $a_1, \ldots, a_m \in \cpalgOfE$, $x \in \cpalgOfE_+$ such that $\|x\| = 1$, non-zero $y \in \cpalgOfEY_+$, and $\epsilon > 0$ be given. Without loss of generality, assume that $\epsilon < 1$. As in the proof of \Cref{large_sub},  since $\cpalgOfE$ is finite (\Cref{cor:stabfin}), by \cite[Proposition 2.2]{ArchPhil:SR1} we can drop requirement (\ref{defn.largesubalgebra.largeg}) on $g$ from the definition of centrally large subalgebra, that is, it is not necessary to ensure explicitly that $\|(1 - g)x(1 - g)\| > 1 - \epsilon$.

Choose $c_j \in \bigoplus_{n = -N}^N E_n$ such that $\|a_j - c_j\| < \epsilon/3$ for $1\leq j\leq m$.  For each $j$ write $c_j = \sum_{k = -N}^N \xi_{j, k}$ for some $\xi_{j, k} \in E_k$. Since $a_1, \dots, a_m$ are arbitrary, we may assume that $N > 0$.  Define 
\begin{equation*}
K = \max \{  \|\xi_{j, k}\| \mid  -N \leq k \leq N, 1 \leq j \leq m\}.
\end{equation*}
Let 
\begin{equation*} 
\epsilon_0=\frac{\epsilon}{3N(N+1)K},
\end{equation*} 
and fix $M \in \Z_{>0}$ such that 
\[
\frac{1}{M} < \frac{\epsilon_0}{12}.  
\]
Since both $\cpalgOfE$ and $\cpalgOfEY$ are simple (\Cref{19-10-18-3}, \Cref{thm.largesubalgproperties}), by  \cite[Lemma 2.4]{PhLarge} there are $y_0 \in \cpalgOfEY_+ \setminus \{0\}$ such that $\bigoplus_{k=1}^{2M} y_0 \precsim_\cpalgOfEY y$, and $x_0 \in \cpalgOfE_+ \setminus \{0\}$ such that $\bigoplus_{k=1}^{2M-1} x_0 \precsim_\cpalgOfE x$.

Apply \Cref{Almost_large_sub} to $M-1$, $c_1, \ldots, c_m \in \bigoplus_{n = -N}^N E_n$, $y_0 \in \cpalgOfEY_+ \setminus \{0\}$, and $x_0 \in \cpalgOfE_+ \setminus \{0\}$ to find $g_0 \in C(X)_+$ such that
\begin{enumerate}[label=(\roman*)]
\item $0 \leq g_0 \leq 1$,
\item $(1 - g_0)c_j \in \cpalgOfEY$, for every $j=1, \cdots, m$,
\item $g_0 \precsim_\cpalgOfEY y_0$ and $g_0 \precsim_\cpalgOfE x_0$,
\item $g_0 \sim_{\cpalgOfE} g_0 \circ \alpha^k$ whenever $-M+1\leq k\leq M-1$.
\end{enumerate}

We will use $g_0$ to construct $g \in C(X)_+$ which satisfies (1)-(4) and (6) of Definition 6.7.

For each $k$, $0 \leq k \leq M-2$, construct $g_{k+1} \in C(X)_+$ from $g_{k}$ as follows. Choose $g_{k+1} \in C(X)_+$ such that $\supp(g_{k+1}) 
\subset \supp(g_k \circ \alpha^{-1} + g_k + g_k \circ \alpha)$, $0 \leq g_{k+1} \leq 1$ and $g_{k+1}(x) = 1$ for every $x \in X$ satisfying $(g_k \circ \alpha^{-1} + g_k + g_k \circ \alpha)(x) \geq \epsilon_0/12$. 
Then, for every $1\leq k\leq M-1$,
\[
\supp(g_k) \subset \bigcup_{l = -k}^{k} \supp(g_0 \circ \alpha^l), \text{ } 0 \leq g_k \leq 1 ,
\]
and for every $0 \leq l < k\leq M-1$ and $-1\leq r\leq 1$, 
 \begin{equation} \label{gkdef} \|g_{k} (g_l \circ \alpha^r) - g_l \circ \alpha^r  \| < \epsilon_0/6.
 \end{equation}
Let
\[
g:= 1- \left (1 - \frac{1}{M}\sum_{k=1}^{M-1} g_k \right )(1-g_0).
\]
Observe that $0 \leq g \leq 1$. We verify that $g$ and $c_1, \dots, c_m$ satisfy (1)--(4) and (6) of \Cref{defn.largesubalgebra}.

Let 
\[ h := \frac{1}{M} \sum_{k=1}^{M-1} g_k.\]
Then $g = g_0 + h - hg_0$, so
\begin{align*}
\|g - h\| &= \left\|g_0 - \frac{1}{M} \sum_{k = 1}^{M - 1} g_k g_0 \right\| \\
&= \left\|\frac{1}{M} g_0 + \frac{1}{M}\sum_{k = 1}^{M - 1} (g_0 - g_kg_0)\right\|\\
&\leq \frac{1}{M} + \frac{M-1}{M} \frac{\epsilon_0}{6} \\
&< \frac{\epsilon_0}{12} + \frac{\epsilon_0}{6}\\
&< \frac{\epsilon_0}{3}, 
\end{align*}
where in the third line we used \eqref{gkdef} and the fact that $0 \leq g_0 \leq 1$. It follows that
\begin{align*}
\|g - g \circ \alpha\| &\leq \|g - h\| + \|h - h \circ \alpha\| + \|h \circ \alpha - g \circ \alpha\|\\
&< \frac{2\epsilon_0}{3} + \| h - h \circ \alpha\|.  
\end{align*}
Note that $h - h \circ \alpha = \frac{1}{M} \sum_{k = 1}^{M - 1} (g_k - g_k \circ \alpha)$.  Since $0 \leq g_k \leq 1$ for all $0 \leq k \leq M-1$, we have
\begin{equation*}
\frac{1}{M} \sum_{k = 1}^{M - 1} (g_k g_{k - 1} \circ \alpha - g_k \circ \alpha) \leq \frac{1}{M} \sum_{k = 1}^{M - 1} (g_k - g_k \circ \alpha) \leq \frac{1}{M} \sum_{k = 1}^{M - 1} (g_k - g_{k - 1} g_k \circ \alpha),
\end{equation*}
so 
\[
\|h - h \circ \alpha\| \leq \max\left\{\left\|\frac{1}{M}\sum_{k = 1}^{M - 1} (g_k g_{k - 1} \circ \alpha - g_k \circ \alpha)\right\|, \left\|\frac{1}{M}\sum_{k = 1}^{M - 1} (g_k - g_{k - 1} g_k \circ \alpha) \right\|\right\}.
\]
We have
\begin{align*}
\lefteqn{\left\|\sum_{k = 1}^{M - 1} (g_k g_{k - 1} \circ \alpha - g_k \circ \alpha)\right\|}\\
&=  \left\|\sum_{k = 1}^{M - 1} g_k g_{k - 1} \circ \alpha - \sum_{k = 2}^{M} g_{k - 1} \circ \alpha \right\|\\
&\leq \|g_1 g_0 \circ \alpha\| + \|g_{M - 1} \circ \alpha\| + \sum_{k = 2}^{M - 1} \|g_k g_{k - 1} \circ \alpha - g_{k - 1} \circ \alpha \| \\
&\leq 1 + 1 + (M - 2) \frac{\epsilon_0}{6}
\end{align*}
by \eqref{gkdef}, whence 
\[
\left\|\frac{1}{M}\sum_{k = 1}^{M - 1} (g_k g_{k - 1} \circ \alpha - g_k \circ \alpha)\right\| < \frac{2}{M} + \frac{\epsilon_0}{6}.
\]
Moreover,
\begin{align*}
\lefteqn{\left\|\sum_{k = 1}^{M - 1} (g_k - g_{k - 1} g_k \circ \alpha)\right\|}
\\&= \left\|\sum_{k = 1}^{M - 1} (g_k \circ \alpha^{-1} - g_{k - 1} \circ \alpha^{-1} g_k)\right\| \\
&= \left\|\sum_{k = 2}^{M} g_{k - 1} \circ \alpha^{-1} - \sum_{k = 1}^{M - 1}g_{k - 1} \circ \alpha^{-1} g_k\right\| \\
&\leq \|g_{M - 1} \circ \alpha^{-1}\| + \|g_0 \circ \alpha^{-1} g_1\| + \sum_{k = 2}^{M - 1} \|g_k g_{k - 1} \circ \alpha^{-1} - g_{k - 1} \circ \alpha^{-1}\| \\
&\leq 1 + 1 + (M - 2)\frac{\epsilon_0}{6}
\end{align*}
again by \eqref{gkdef}, whence 
\[
\left\|\frac{1}{M}\sum_{k = 1}^{M - 1} (g_k - g_{k - 1} g_k \circ \alpha) \right\| < \frac{2}{M} + \frac{\epsilon_0}{6}.
\]
It follows that
\begin{equation*}
\|h - h \circ \alpha\| < \frac{2}{M} + \frac{\epsilon_0}{6} < \frac{\epsilon_0}{6} + \frac{\epsilon_0}{6} = \frac{\epsilon_0}{3},
\end{equation*}
and thus
\[ 
\| g - g \circ \alpha \| < \epsilon_0.
\]
From this we get
\begin{align*}
    \| g c_j - c_j g\| &\leq \sum_{k=-N}^N \|g \xi_{j,k} - \xi_{j,k} g\| \\
    &= \sum_{k=-N}^N \| \xi_{j,k} g \circ \alpha^k-  \xi_{j,k} g  \| \\
    &\leq  \sum_{k=-N}^N \|  g \circ \alpha^k- g  \| K \\
&= 2\sum_{k=1}^N\|g\circ\alpha^k-g\|K\\
    &=2K\sum_{k=1}^N \|  g \circ \alpha^k- g \circ \alpha^{k-1} + g \circ \alpha^{k-1} - \dots + g \circ \alpha -  g  \| \\
    &\leq2K\sum_{k=1}^N k\|g\circ\alpha-g\| \\
    &\leq 2K \epsilon_0 \sum_{k=1}^N k\\
    &\leq 2K\epsilon_0  \,\frac{N(N+1)}{2}\\
    &=N(N+1)K \epsilon_0\\
    &= \epsilon/3.
\end{align*}
It follows that $\|g a_j - a_j g\| < \epsilon$ for all $1 \leq j \leq m$. Thus (6) holds.

Next, observe that \begin{equation*}
(1 - g)c_j = \left(1 - \frac{1}{M}\sum_{k = 1}^{M - 1} g_k\right)(1 - g_0) c_j.
\end{equation*}
Thus that $(1 - g)c_j \in \cpalgOfEY$ follows from the fact that $g_k \in C(X) \subset \cpalgOfEY$ and that $(1 - g_0)c_j \in \cpalgOfEY$ by the choice of $g_0$.

Since each $g_k$ has support contained in $\bigcup_{l=-k}^k \supp(g_0 \circ \alpha^l)$, we see that $\supp(g) \subset \bigcup_{k = -M + 1}^{M - 1} \supp(g_0 \circ \alpha^k)$.  Furthermore, $g_0$ satisfies $g_0 \sim_\cpalgOfE g_0 \circ \alpha^k$ for $-M+1\leq k\leq M-1$. Thus 
\[
g \precsim_{C(X)} \sum_{k = - M + 1}^{M - 1} g_0 \circ \alpha^{k} \precsim_{C(X)} \bigoplus_{k = - M + 1}^{M - 1} g_0 \circ \alpha^{k} \sim_\cpalgOfE \bigoplus_{k = - M + 1}^{M - 1} g_0.
\]

By construction $g_0 \precsim_\cpalgOfE x_0$.  It immediately follows that
\begin{equation*}
g \precsim_\cpalgOfE \bigoplus_{k=-M + 1}^{M - 1} x_0 \precsim_\cpalgOfE x.
\end{equation*}
Finally, we show $g \precsim_\cpalgOfEY y$. First, from \Cref{large_sub}, $\cpalgOfEY$ is stably large in $\cpalgOfE$.  This allows us to appeal to \cite[Lemma 6.5]{PhLarge} with $a = g$, $b = y$ elements of $\cpalgOfEY_+$ and $c = \bigoplus_{k = - M + 1}^{M - 1} g_0$  and $x = g_0$  elements of $(\mathcal{K} \otimes \mathcal O({\E)})_+$.
Recall that
\begin{equation*}
g \precsim_\cpalgOfE \bigoplus_{k = - M + 1}^{M - 1} g_0.
\end{equation*}
By construction $g_0 \precsim_{\cpalgOfEY} y_0$ and hence $g_0 \precsim_{\cpalgOfE} y_0$, so we have that
\begin{equation*}
g_0 \oplus \bigoplus_{k = - M + 1}^{M - 1} g_0 = \bigoplus_{k=1}^{2M} g_0 \precsim_{\cpalgOfE} \bigoplus_{k=1}^{2M} y_0 \precsim_{\cpalgOfE} y.
\end{equation*}
It follows from \cite[Lemma 6.5]{PhLarge} that $g \precsim_{\cpalgOfEY} y$, showing that (4) holds. Thus $\mathcal{O}(\E_Y)$ is centrally large in $\mathcal{O}(\E)$.
\end{proof}

\begin{theorem} \label{cor.OBNucdim} Let $X$ be an infinite compact metric space with $\dim(X) < \infty$, $\alpha : X \to X$ a minimal homeomorphism, $\mathscr{V} = [V, p, X]$ a line bundle. Suppose that $Y\subset X$ is a non-empty closed subset meeting each $\alpha$-orbit at most once and such that for every $N \in \Z_{\geq 0}$ there exists an open set $W_N \supset Y$ for which  $\mathscr{V}|_{\alpha^n(W_N)}$ is trivial whenever $-N \leq n \leq N$. Then $\mathcal{O}(\E_Y)$ has nuclear dimension at most one.
\end{theorem}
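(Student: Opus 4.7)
The plan is to leverage the fact that $\mathcal{O}(\E_Y)$ is a centrally large subalgebra of $\mathcal{O}(\E)$ (Theorem~\ref{thm.iscentrallylarge}) combined with the already-established finite nuclear dimension of $\mathcal{O}(\E)$ (Theorem~\ref{fin_nuc_dim}), and then pass through $\mathcal{Z}$-stability to bring the nuclear dimension down to at most one.

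First I would record that $\mathcal{O}(\E)$ is simple, separable, unital, nuclear, infinite dimensional and, by Theorem~\ref{fin_nuc_dim}, has nuclear dimension at most one; in particular it is $\mathcal{Z}$-stable by Winter's theorem \cite{Win:Z-stabNucDim}. Next I would verify the ``hypothesis hygiene'' for $\mathcal{O}(\E_Y)$: it is separable and unital because it sits inside $\mathcal{O}(\E)$ and contains $C(X)$; it is simple and has the same tracial state space as $\mathcal{O}(\E)$ by Corollary~\ref{thm.largesubalgproperties}; it is infinite dimensional since it contains the infinite dimensional $\mathrm{C}^*$-algebra $C(X)$; and it is nuclear because it is the Cuntz--Pimsner algebra of a $\mathrm{C}^*$-correspondence over the nuclear $\mathrm{C}^*$-algebra $C(X)$ (applying \cite[Corollary 7.4]{Katsura2004}, which gives nuclearity of $\mathcal{O}(\E_Y)$ from nuclearity of the coefficient algebra).

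The main step, which I expect to be the only nontrivial ingredient, is the transfer of $\mathcal{Z}$-stability from $\mathcal{O}(\E)$ to the centrally large subalgebra $\mathcal{O}(\E_Y)$. This is precisely the content of the Archey--Phillips permanence result for centrally large subalgebras: if $B \subset A$ is a centrally large subalgebra of a simple, separable, unital $\mathrm{C}^*$-algebra $A$ and $A$ is $\mathcal{Z}$-stable, then $B$ is $\mathcal{Z}$-stable (\cite{ArchPhil:SR1}, using the intermediate notion of tracial $\mathcal{Z}$-absorption together with the Hirshberg--Orovitz theorem identifying tracial $\mathcal{Z}$-absorption with $\mathcal{Z}$-stability in the presence of simplicity, separability, unitality and nuclearity). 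Applying this to the inclusion $\mathcal{O}(\E_Y) \subset \mathcal{O}(\E)$ from Theorem~\ref{thm.iscentrallylarge} yields $\mathcal{Z}$-stability of $\mathcal{O}(\E_Y)$.

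Finally, with $\mathcal{O}(\E_Y)$ simple, separable, unital, nuclear, infinite dimensional and $\mathcal{Z}$-stable, the theorem of Castillejos--Evington--Tikuisis--White--Winter \cite[Theorem B]{CETWW} applies and gives $\dim_{\mathrm{nuc}} \mathcal{O}(\E_Y) \leq 1$, as required. The potential subtlety to double-check is simply the correct citation and precise statement for the $\mathcal{Z}$-stability permanence result; everything else is an assembly of theorems already invoked for $\mathcal{O}(\E)$ itself in Section~\ref{sec.RD}.
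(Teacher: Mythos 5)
Your proposal is correct and follows essentially the same route as the paper: finite nuclear dimension of $\mathcal{O}(\E)$ gives $\mathcal{Z}$-stability via \cite{Win:Z-stabNucDim}, which passes to the centrally large subalgebra $\mathcal{O}(\E_Y)$ (the paper cites \cite[Corollary 3.5]{ArBkPh-Z}, the Archey--Buck--Phillips tracial $\mathcal{Z}$-absorption result you describe, rather than \cite{ArchPhil:SR1}), and then \cite[Theorem B]{CETWW} yields nuclear dimension at most one. The only nitpick is that citation attribution; otherwise your argument matches the paper's proof step for step.
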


\begin{proof}
Since $\mathcal{O}(\E_Y)$ and $\mathcal{O}(\E)$ are nuclear by \cite[Corollary 7.4]{Katsura2004} and $\mathcal{O}(\E_Y)$ is centrally large in $\mathcal{O}(\E)$, we have that $\mathcal{O}(\E_Y)$ is $\mathcal{Z}$-stable if $\mathcal{O}(\E)$ is $\mathcal{Z}$-stable \cite[Corollary 3.5]{ArBkPh-Z}. By \Cref{fin_nuc_dim}, $\mathcal{O}(\E)$ has finite nuclear dimension, and hence is $\mathcal{Z}$-stable by \cite{Win:Z-stabNucDim}. Thus $\mathcal{O}(\E_Y)$ is also $\mathcal{Z}$-stable and hence has nuclear dimension at most one by \cite[Theorem B]{CETWW}.
\end{proof}

We can now extend the classification result of \Cref{class}. Let $\mathcal{C}_{\mathrm{ob}}$ denote the class of $\mathrm{C}^*$-algebras of the form  $\mathcal{O}(C_0(X \setminus Y)\Gamma(\mathscr{V}, \alpha))$ where $X$ is a finite dimensional infinite compact metric space, $\mathscr{V} = [V, p, X]$ is a line bundle, $\alpha : X \to X$ a minimal homeomorphism and $Y \subset X$ is a non-empty closed subset meeting every $\alpha$-orbit at most once such that for every $N \in \mathbb{Z}_{>0}$ there exists an open set $W_N \supset Y$ such that $\mathscr{V}|_{\alpha^n(W_N)}$ is trivial for every $-N \leq n \leq N$. Again by \cite[Proposition 8.8]{Katsura2004}, since $C(X)$ is commutative, $\mathcal{O}(C_0(X \setminus Y)\Gamma(\mathscr{V}, \alpha))$ satisfies the UCT. Thus \Cref{ClassThm} gives us the following.

\begin{theorem}\label{class2}
 Suppose that $A, B \in \mathcal{C} \cup \mathcal{C}_{\mathrm{ob}}$ and 
	\[ \psi : \Ell(A) \to \Ell(B) \]
	is an isomorphism. Then there exists a $^*$-isomorphism 
	\[ \Psi : A \to B,\]
which is unique up to approximate unitary equivalence and satisfies $\Ell(\Psi) = \psi$.
\end{theorem}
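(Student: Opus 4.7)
The plan is simply to verify that every member of $\mathcal{C} \cup \mathcal{C}_{\mathrm{ob}}$ satisfies the hypotheses of \Cref{ClassThm}, and then invoke that theorem. For $A \in \mathcal{C}$, this was already carried out in the proof of \Cref{class}, so the work reduces to the case $A \in \mathcal{C}_{\mathrm{ob}}$, that is, $A = \mathcal{O}(\E_Y)$ where $\E = \Gamma(\mathscr{V}, \alpha)$ for $\mathscr{V}$ a line bundle and $Y$ a non-empty closed subset satisfying the standing hypotheses: meeting every $\alpha$-orbit at most once, and having, for every $N$, an open neighbourhood $W_N$ over which $\mathscr{V}|_{\alpha^n(W_N)}$ is trivial for $|n| \leq N$.

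First I would check the structural properties. Separability of $\mathcal{O}(\E_Y)$ follows from separability of $C(X)$ and the fact that $\E_Y$ is generated as a $\mathrm{C}^*$-correspondence by the countable set obtained from a countable dense subset of $\E$ multiplied on the left by a countable approximate identity of $C_0(X \setminus Y)$. Unitality of $\mathcal{O}(\E_Y)$ is automatic since $1_{C(X)} \in C(X) \subset \mathcal{O}(\E_Y)$. Simplicity of $\mathcal{O}(\E_Y)$ is exactly \Cref{thm.largesubalgproperties}. Infinite dimensionality: since $\mathcal{O}(\E_Y)$ is a large (in particular unital) subalgebra of $\mathcal{O}(\E)$, and $\mathcal{O}(\E)$ is infinite dimensional (it contains the infinite dimensional $\mathrm{C}^*$-algebra $C(X)$ as a unital subalgebra), $\mathcal{O}(\E_Y)$ must itself be infinite dimensional.

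Next I would invoke the heavy analytic inputs already available. Finite nuclear dimension (in fact, nuclear dimension at most one) is precisely \Cref{cor.OBNucdim}. The UCT is noted in the paragraph preceding the theorem statement: by \cite[Proposition 8.8]{Katsura2004}, since $C(X)$ is commutative and hence satisfies the UCT, so does every Cuntz--Pimsner algebra of a $C(X)$-correspondence, in particular $\mathcal{O}(\E_Y)$.

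With both $A$ and $B$ verified to be separable, unital, simple, infinite dimensional, of finite nuclear dimension and satisfying the UCT, \Cref{ClassThm} applied to the given isomorphism $\psi : \Ell(A) \to \Ell(B)$ produces a $^*$-isomorphism $\Psi : A \to B$, unique up to approximate unitary equivalence, with $\Ell(\Psi) = \psi$, completing the proof. I do not anticipate any genuine obstacles here: all the substantial work (finite Rokhlin dimension, central largeness, $\mathcal{Z}$-stability transfer, and the deep classification theorem itself) has already been done in the earlier sections; the present theorem is a packaging statement collecting the hypotheses and invoking \Cref{ClassThm}.
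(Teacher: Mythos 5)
Your proposal is correct and follows exactly the paper's route: the theorem is a packaging statement, with simplicity from \Cref{thm.largesubalgproperties}, nuclear dimension at most one from \Cref{cor.OBNucdim}, the UCT from \cite[Proposition 8.8]{Katsura2004}, and the conclusion from \Cref{ClassThm}. One small repair: being a unital subalgebra of an infinite dimensional $\mathrm{C}^*$-algebra does not by itself force infinite dimensionality, but this is immediate anyway since $C(X) \subset \mathcal{O}(\E_Y)$ and $X$ is infinite.
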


\section{Applications} \label{sec.examples}

In this section we provide some interesting examples of Cuntz--Pimsner algebras that fall within the classification theorems of the previous sections. We begin by noting that both classes $\mathcal{C}$ and $\mathcal{C}_{ob}$ are quite rich. Let $X$ be an infinite compact metric space with $\dim(X) < \infty$. If $\mathcal{T} = [X \times \mathbb{C}, p, X]$ is the trivial line bundle, then as observed in \Cref{CPexamples} (3), any minimal homeomorphism $\alpha : X \to X$ gives us $\mathcal{O}(\Gamma(\mathcal{T}, \alpha)) \cong C(X) \rtimes_{\alpha} \mathbb{Z}$, so $\mathcal{C}$ contains all crossed products by minimal homeomorphisms on infinite compact metric spaces with $\dim(X) < \infty$. Similarly, $\mathcal{C}_{\mathrm{ob}}$ contains any orbit-breaking subalgebra of the form $\mathrm{C}^*(C(X), C_0(X \setminus Y)u) \subset C(X) \rtimes_\alpha \mathbb{Z}$, where $u$ is the unitary implementing $\alpha$, for any non-empty closed subset $Y \subset X$ satisfying $\alpha^n(Y) \cap Y = \emptyset$ for every $n \in \mathbb{Z} \setminus \{0\}$.

\subsection{The general outlook}
 Recall that the \emph{Picard group} $\mathrm{Pic}(A)$ of a $\mathrm{C}^*$-algebra $A$ consists of the isomorphism classes of Hilbert $\mathrm{C}^*$-bimodules over $A$ which are both left and right full, where the group multiplication is given by the internal tensor product \cite{BrownGreenRieffel1977}. A Hilbert $A$-bimodule $\E$ is \emph{symmetric} if the left and right multiplication by $A$ is the same, that is, $a \xi = \xi a$ for every $\xi \in \E$ and every $a \in A$. The subgroup ${\rm CPic}(A)$ consisting of isomorphism classes of left and right full symmetric Hilbert $\mathrm{C}^*$-bimodules is called the \emph{classical Picard group} of $A$. For an automorphism $\alpha : A \to A$, let $A_{\alpha}$ denote the Hilbert $A$-bimodule which as a right Hilbert $A$-module is given by $A$ and with left Hilbert $A$-module structure given by $a \cdot \xi = \alpha^{-1}(a)\xi $  and $_{A_\alpha} \langle \xi, \eta \rangle = \alpha(\xi \eta^*)$ . As shown in \cite[Proposition 3.1]{BrownGreenRieffel1977}, there is an anti-homomorphism  $\psi\colon\mathrm{Aut}(A)\to \mathrm{ Pic}(A)$ given by $\psi(\alpha)=A_{\alpha^{-1}}$ such that the sequence 
\[
 1\to \mathrm{Gin}(A)\to \mathrm{Aut}(A)\to \mathrm{Pic}(A)
 \]
 is exact, where $\mathrm{Gin}(A)$ is the group of generalised inner automorphisms which consists of all automorphisms of the form $\mathrm{Ad}(u)$ for a unitary $u$ in the multiplier algebra of $A$. Thus $\phi\colon\mathrm{Aut}(A)\to \mathrm{Pic}(A)$ given by $\phi(\alpha)=A_\alpha$ is a group homomorphism  with kernel $\mathrm{Gin}(A)$. If $A=C(X)$ is a commutative $\mathrm{C}^*$-algebra, then the group $\mathrm{Gin}(A)$ is trivial and $\mathrm{Aut}(C(X)) \cong \mathrm{Homeo}(X)$, hence each $\alpha\in \mathrm{Homeo}(X)$ can be identified with $C(X)_\alpha$ in $\mathrm{Pic}(C(X))$, where by abuse of notation we have used $\alpha$ to also denote the automorphism $f \mapsto f \circ \alpha$. Moreover, the Picard group $\mathrm{Pic}(C(X))$ is known to be isomorphic to the semidirect product group $\mathrm{CPic}(C(X))\rtimes  \mathrm{Homeo}(X)$ where the action of $ \mathrm{Homeo}(X)$ is given by $\alpha\cdot M = C(X)_\alpha \otimes M\otimes C(X)_{\alpha^{-1}}$. (This observation was made in \cite[Section 3]{BrownGreenRieffel1977}, see also \cite[Theorem 1.12 and Lemma 2.1]{AbadieExel1997}. Note that there the left and right actions are reversed.) Therefore every right and left full Hilbert $\mathrm{C}^*$-bimodule $M$ over $C(X)$ is isomorphic to $M^s\otimes C(X)_\alpha$  for some $\alpha\in\mathrm{Aut}(C(X))$ where $M^s$ denotes the symmetrisation of $M$ (see \cite[Definition 1.5 and proof of Theorem 1.12]{AbadieExel1997}). As a right and left Hilbert $C(X)$-module, $M^s$ is of course just the module of sections of some line bundle $\mathscr{V}$ over $X$ (in our notation $M_s \cong \Gamma(\mathscr{V},\text{id}_X)$), so we have that $\Gamma(\mathscr{V}, \alpha) \cong M$, as we expect from \Cref{prop.CharOfFull}. In particular, the group of Hilbert $C(X)$-bimodules is strictly larger than $\mathrm{Homeo}(X)$ whenever $X$ admits a non-trivial line bundle. Thus we have many examples of Cuntz--Pimsner algebras which fall within our classification results and are not a priori given by crossed products of minimal homeomorphisms. (Note that, thanks to the classification machinery, one can look at the Elliott invariant of a given Cuntz--Pimsner algebra to determine if it is isomorphic to \emph{some} crossed product by a minimal homeomorphism, even if this is far from obvious otherwise.) Some examples follow.

1.  The only two-dimensional manifolds admitting minimal homeomorphisms are finite unions of the two-torus $\mathbb{T}^2$ and the Klein bottle. All of these spaces admit non-trivial line bundles.  By \cite[Theorem 8.6]{Katsura2004} (also \cite[Theorem 4.9]{Pimsner1997}), for a Cuntz--Pimsner algebra $\mathcal{O}(\Gamma(\mathscr{V}, \alpha))$ there is a six-term exact sequence
\[
\xymatrix{ K^0(X) \ar[rr]^{\id_* - [\Gamma(\mathscr{V}, \alpha)]_*} & & K^0(X) \ar[rr] & & K_0(\mathcal{O}(\Gamma(\mathscr{V}, \alpha))\ar[d] \\
K_1(\mathcal{O}(\mathscr{V}, \alpha)) \ar[u] & & K^1(X) \ar[ll] & & K^1(X). \ar[ll]_{\id_* - [\Gamma(\mathscr{V}, \alpha)]_*}\\ }
\]
Thus if $X$ is either a finite union of 2-tori or the Klein bottle, $\alpha : X \to X$ a minimal homeomorphism and $\mathscr{V}$ a non-trivial line bundle, in general the maps $\id - [\Gamma(\mathscr{V}, \alpha)]$ and $\id - \alpha$ will not coincide and hence the $K$-theory of $\mathcal{O}(\Gamma(\mathscr{V}, \alpha))$ and $C(X)\rtimes_{\alpha} \mathbb{Z}$ will differ, implying that for $\E = \Gamma(\mathscr{V}, \alpha)$, the $\mathrm{C}^*$-algebras  $\mathcal{O}(\E)$ and $C(X)\rtimes_\alpha \mathbb{Z}$ are not isomorphic.  If $x \in X$ is any point, then the orbit-breaking algebra $\mathcal{O}(\E_{\{x\}})$ is easily seen to satisfy the requirements of \Cref{thm.iscentrallylarge}, and hence is also covered by the classification theorem of the previous section.  In the case of  $X=\mathbb{T}^2$, certain Cuntz--Pimsner algebras $\mathcal{O}_{C(\mathbb{T}^2)}(\Gamma(\mathscr{V}, \alpha))$ correspond to quantum Heisenberg manifolds. These are discussed in greater detail in Example~\ref{quantumH}, below. 

2. In \cite{DPS:MinDynK}, many examples of minimal homeomorphisms on infinite compact metric spaces of finite dimensional metric spaces are constructed. In \cite{DPS:OrbitBreaking}, various orbit-breaking constructions are also given, leading to a wide range of $K$-theory for the corresponding $\mathrm{C}^*$-algebras. Consequently, any space admitting non-trivial vector bundles provides examples of Cuntz--Pimsner algebras that are not a priori given by crossed products. In that paper, a main motivation is to determine the range of the Elliott invariant for minimal principal \'etale groupoid $\mathrm{C}^*$-algebras that arise from minimal dynamical systems. It remains an open question whether the Elliott invariant can be exhausted by such constructions. However, related work by X. Li \cite{Li:ClassifiableCartan} shows that \emph{twisted} principal \'etale groupoid constructions (based on inductive limit, rather than dynamical, constructions) do indeed exhaust the invariant. It is unclear whether twists are needed. Nevertheless, allowing for the more general Cuntz--Pimsner algebras of $C(X)$-bimodules and their orbit-breaking subalgebras certainly makes the possibility of exhausting the invariant appear more likely. We note that many of the constructions in \cite{DPS:OrbitBreaking} require infinite dimensional spaces. The results of this paper can be extended to the case where the minimal dynamical system $(X, \alpha)$ has mean dimension zero \cite{FoJeSt:rsh}. This does not require that $\dim(X) < \infty$.

\subsection{Quantum Heisenberg manifolds} \label{quantumH} We discuss the particular example of quantum Heisenberg manifolds in a bit more detail. Quantum Heisenberg manifolds were constructed by Rieffel in the late 1980's as deformation-quantisations of classical Heisenberg manifolds (see \cite{Rieffel1989}). As shown by Abadie, Eilers and Exel in \cite[Example 3.3]{AEE:Cross}, a quantum Heisenberg manifold is an example of a crossed product by a Hilbert bimodule over $\mathbb{C}(\mathbb{T}^2)$ which cannot be realised as the crossed product of $\mathbb{C}(\mathbb{T}^2)$ by a homeomorphism nor by a partial homeomorphism. For any  pair of real numbers $\mu, \nu$, let $\lambda$ be the action of $\mathbb{Z}$ on $\mathbb{R}\times\mathbb{T}$ given by 
\[\lambda_p(x,y)=(x-2p\mu, y-2p\nu),\]
where $p\in\mathbb{Z}$, $x\in\mathbb{R}$, $y\in\mathbb{T}$. In what follows, we will implicitly use the usual identification of $\mathbb{R}$ with $\mathbb{T}$ given by $y\rightarrow e^{2\pi iy}$ when writing the second coordinate of any map defined on $\mathbb{R}\times\mathbb{T}$. Observe that $\lambda$ is minimal whenever $1,\mu,\nu$ are rationally independent. 

Consider the crossed product $B=C_b(\mathbb{R}\times\mathbb{T})\rtimes_{\lambda}\mathbb{Z}$, and for a positive integer $c$ define an action $\rho$ of $\mathbb{Z}$ on $B$ by 
\[\rho_m(\Phi)(p,x,y)=e^{-2\pi icmp(y-p\nu)} \Phi(p,x+m,y),\]
for $\Phi\in C_c(\mathbb{Z}\times\mathbb{R}\times\mathbb{T})$, $x\in\mathbb{R}$, $y\in\mathbb{T},$ and $m,p\in\mathbb{Z}$. 
  The \emph{quantum Heisenberg manifold} $D_{\mu,\nu}^{c}$ is defined as the $\mathrm{C}^*$-subalgebra of the multiplier algebra $\mathcal{M}(C_0(\mathbb{R}\times\mathbb{T})\rtimes_{\lambda}\mathbb{Z})$ generated by the functions $\Phi\in  C_c(\mathbb{Z},C_b(\mathbb{R}\times\mathbb{T}))$ such that $\rho_m(\Phi)=\Phi$, that is,
 \[D_{\mu.\nu}^{c} = \overline{\{\Phi\in C_c(\mathbb{Z}, C_b(\mathbb{R} \times \mathbb{T})) \mid \Phi(p, x+m, y)=e^{-2\pi icmp(y-p\nu)} \Phi(p,x,y)\}}.\]
There exists an action $\gamma$ of $\mathbb{T}$ on $D_{\mu,\nu}^{c}$, defined by
\[\gamma_z(\Phi)(p,x,y)=z^p\Phi(p,x,y),\]
for all $\Phi\in D_{\mu,\nu}^{c}$. As in \cite[Example 3.3]{AEE:Cross}, for $n\in\mathbb{Z}$ the $n$-th spectral subspace $D_n$ of $D_{\mu,\nu}^{c}$ is given by the functions $\delta_n\in C_c(\mathbb Z, C_b(\mathbb R\times \mathbb T))$ (see \cite{AbadieExel1997}), that is,
\[D_n=\{f\delta_n|\, f\in C_b(\mathbb{R}\times\mathbb{T}), f(x+1,y)=e^{-2\pi icn(y-n\nu)}f(x,y)\}.\]

A Hilbert $C(\mathbb{T}^2)$-bimodule structure on $D_1$ is given by the following operations: for $g\in C(\mathbb{T}^2)$, $f\delta_1,h\delta_1\in D_1$,  
\[\begin{array}{l}
              (f\delta_1\cdot g)(x,y)=f(x,y)g(x,y)\delta_1\\
              (g\cdot f\delta_1)(x,y)=f(x,y)g(x-2\mu,y-2\nu)\delta_1\\
              \langle f\delta_1,h\delta_1\rangle_\E(x,y)=\overline{f(x,y)}h(x,y) \\
              _\E\langle f\delta_1,h\delta_1\rangle(x,y)=(f\overline{h})\circ\lambda^{-1}(x,y),
\end{array}\]
for all $x\in\mathbb{R}$, and $y\in\mathbb{T}$.
Moreover, $D_1D_1^\ast=D_1^\ast D_1=C(\mathbb{T}^2)$, which implies that $D_1$ is a left and right full $C(\mathbb{T}^2)$-bimodule, and hence is of the form $\Gamma(\mathscr{V}, \alpha)$ for a line bundle $\mathscr{V}  = [V, p, \mathbb{T}^2]$ and homeomorphism $\alpha :  \mathbb{T}^2 \to \mathbb{T}^2$.

Since the action $\gamma$ is saturated (see \cite{AEE:Cross}) and the fixed point algebra $D_0$ is isomorphic to $C(\mathbb{T}^2)$,  we have  \begin{equation}\label{quantumheisenbergmanifold}
\mathcal{O}(D_1)\cong C(\mathbb{T}^2)\rtimes_{D_1}\mathbb{Z}\cong D_{\mu,\nu}^{c}.
\end{equation}
It follows from~\Cref{fin_nuc_dim} and~\Cref{dichotomy} that, when $1,\mu,\nu$ are rationally independent, $D_{\mu,\nu}^{c}$ is a simple, separable, unital, $\mathrm{C}^\ast$-algebra with finite nuclear dimension and stable rank one. In particular, such quantum Heisenberg manifolds are completely classified by the Elliott invariant.

Let us say a bit more about the module $\E$ corresponding to a given quantum Heisenberg manifold. If $X=\mathbb T^2$, then then by \cite[Lemma 2.1]{AbadieExel1997} we have $\mathrm{Pic}(C(\mathbb T^2))\cong \mathbb Z\rtimes_\delta \mathrm{Aut}(C(\mathbb T^2))$, where $\delta_\beta(c)=\mathrm{det}\beta_*\cdot c$, for $\beta\in\mathrm{Aut}(C(\mathbb{T}^2))$ and $c\in\mathbb{Z}$. Here $\beta_*$ denotes the standard automorphism of $K_0(C(\mathbb{T}^2))\cong\mathbb{Z}^2$, viewed as an element of $GL_2(\mathbb{Z})$, and we see that if $\E = \Gamma(\mathscr{V}, \alpha)$ is a left and right full Hilbert $\mathrm{C}^*$-bimodule over $C(\mathbb T^2)$, then 
\[ 
\E \cong M^c\otimes C(\mathbb T^2)_\alpha\ \ \text{ for some }c\in \mathbb Z\ \text{and }\alpha\in \mathrm{Homeo}(\mathbb T^2),
\]
where $M^c=\{f\in C_b(\mathbb R\times \mathbb T) \mid  
f(x+1,y)=e^{-2\pi i cy}f(x,y)\}$ is the symmetric Hilbert $\mathrm{C}^*$-bimodule over $C(\mathbb T^2)$ for pointwise action and inner products given by $\langle f,g\rangle_L=f\bar{g}$ and $\langle f,g\rangle_R= \bar{f}g$. In particular, $c$ allows us to recover $\mathscr{V}$.

Also, for each $c\in \mathbb Z$, $M^c\otimes C(\mathbb T^2)_\alpha\cong M^c\otimes C(\mathbb T^2)_\beta$ if and only if $\alpha=\beta$ (see the proof of \cite[Theorem 1.12 ]{AbadieExel1997}). In particular, $D_1=M^c\otimes C(\mathbb T^2)_{\alpha_{\mu,\nu}}$ where $\alpha_{\mu,\nu}$ is the homeomorphism of $\mathbb T^2$ given by $\alpha_{\mu,\nu}(x,y)=(x+2\mu,y+2\nu)$, and by \eqref{quantumheisenbergmanifold}
\[
 D_{\mu,\nu}^c \cong C(\mathbb T^2)\rtimes_{M^c\otimes C(\mathbb T^2)_{\alpha_{\mu,\nu}}} \mathbb Z\cong \mathcal O(M^c\otimes C(\mathbb T^2)_{\alpha_{\mu,\nu}}).
 \]

Since $K_0(D_{\mu,\nu}^c)=\mathbb Z^3\oplus \mathbb Z_c$ \cite[Theorem 3.4]{Abadie1995}, we know that if $c\neq c'$, then $D_{\mu,\nu}^c\not\cong D_{\mu',\nu'}^{c'}$. Since $c$ determines the line bundle, this means that $\mathcal{O}(\Gamma(\mathscr{V}, \alpha)) \cong D_{\mu,\nu}^c \not\cong D_{\mu',\nu'}^{c'} \cong \mathcal{O}(\Gamma(\mathscr{W}, \beta)) $ if $\mathscr{V} \not\cong \mathscr{W}$.

On the other hand, Theorem 2.2 of \cite{AbadieExel1997} says that if $(\mu,\nu)$ and 
$(\mu',\nu')$ belong to the same orbit under the usual action of $GL_2(\mathbb Z)$ on $\mathbb T^2$, then $D_{\mu,\nu}^c\cong D_{\mu',\nu'}^c$. So the Cuntz--Pimsner algebras of non-isomorphic right and left full Hilbert $C(\mathbb{T}^2)$-bimodules corresponding to non-trivial line bundles can be isomorphic. This is not so surprising as already in the trivial line bundle case, which is to say, the usual crossed product by a minimal homeomorphism, there exist examples of different homeomorphisms giving rise to isomorphic $\mathrm{C}^*$-algebras, see for example \cite{Str:XxSn}.

Is every Cuntz--Pimsner algebra of a right and left full, minimal, non-periodic Hilbert $C(\mathbb{T}^2)$-bimodule isomorphic to a simple quantum Heisenberg manifold? The homeomorphisms $\alpha_{\mu,\nu}$ are always homotopic to the identity via $\alpha_t(x,y):=(x+2t\mu, y+2t\nu)$. However, it is known that the mapping class group of $\mathbb T^2$, the group of homotopy classes of orientation preserving homeomorphisms on $\mathbb T^2$, is isomorphic to the group $SL_2(\mathbb Z)$ \cite[Theorem 2.5]{FarbMargalit:Primer}. Thus every homeomorphism $\alpha_{\mu,\nu}$ corresponds, under this isomorphism, to the identity of $SL_2(\mathbb Z)$ (see \cite[Theorem 2.5]{FarbMargalit:Primer}). The existence of a minimal skew product diffeomorphism of $\mathbb T^2$ of the form $\theta(x,y)=(x+\mu, cx-y+f(x))$ which is \emph{not} homotopic to the identity (in fact, not even orientation-preserving) was proved in \cite{dosSUrz2009}.
It follows that if $\theta$ is not homotopic to the identity then the $K$-theory of $\mathcal{O}(\Gamma(\mathscr{V}, \theta))$ will not be the same as that of any quantum Heisenberg manifold of the form $\mathcal{O}(\Gamma(\mathscr{V}, \beta))$, and so $ \mathcal{O}(\Gamma(\mathscr{V}, \theta))$ cannot be isomorphic to 
$\mathcal{O}(\Gamma(\mathscr{V}, \beta))$. However this does not rule out an isomorphism with a quantum Heisenberg manifold with respect to a line bundle not isomorphic to $\mathscr{V}$.

\end{document}